\numberwithin{equation}{section}
\newcommand\R{\mathbb{R}}
\newcommand\Rinf{\overline{\mathbb{R}}}
\newcommand\inter[1]{ {\rm \textbf{int}}(#1)} 
\newcommand\closure[1]{ {\rm \textbf{cl}}(#1)} 
\newcommand\dom[1]{ \bs{{\rm dom}}(#1)} 
\newcommand\Dom[1]{ \bs{{\rm Dom}}(#1)} 
\newcommand\gf{\varphi} 
\newcommand\gh{\psi} 
\newcommand\fgam[3]{#1_{#3}^{#2}}
\newcommand\hfgam[3]{#1^{#3, #2}}
\newcommand\prox[3]{ \bs{{\rm prox}}_{#2#1}^{#3}}
\newcommand\ov[1]{\overline{#1}}
\newcommand\mb{\mathbf{B}}
\newcommand\bs[1]{\boldsymbol{#1}}
\newcommand\argmint[1]{\mathop{\bs{\arg\min}}\limits_{#1}}
\newcommand\Nz{\mathbb{N}_0}
\newcommand\Tprox[3]{ \bs T_{#2#1}^{#3}}
\newcommand\argmin[1]{\bs{\arg\min}_{#1}}
\journalname{}
\begin{document}

\title{On fundamental properties of high-order forward-backward envelope}

\titlerunning{On fundamental properties of high-order forward-backward envelope}        

\author{Alireza Kabgani         \and
        Masoud Ahookhosh 
}


\institute{A. Kabgani, M. Ahookhosh \at
              Department of Mathematics, University of Antwerp, Antwerp, Belgium. \\
              \email{alireza.kabgani@uantwerp.be, masoud.ahookhosh@uantwerp.be}            
}

\date{Received: date / Accepted: date}

\maketitle

\begin{abstract}
This paper studies the fundamental properties of the high-order forward–backward splitting mapping (HiFBS) and its associated high-order forward–backward envelope (HiFBE) through the lens of high-order regularization for nonconvex composite functions. Specifically, we (i) establish the boundedness and uniform boundedness of HiFBS, along with the Hölder and Lipschitz continuity of HiFBE; (ii) derive an explicit form for the subdifferentials of HiFBE; and (iii) investigate necessary and sufficient conditions for the differentiability and weak smoothness of HiFBE under suitable assumptions. By leveraging the prox-regularity of $g$ and the concept of $p$-calmness, we further demonstrate the local single-valuedness and continuity of HiFBS, which in turn guarantee the differentiability of HiFBE in neighborhoods of calm points. This paves the way for the development of gradient-based algorithms tailored to nonconvex composite optimization problems.

\keywords{Nonconvex optimization \and High-order forward–backward envelope \and High-order forward–backward splitting mapping \and 
Weak smoothness \and Prox-regular functions \and High-order Moreau envelope}

 \subclass{90C26 \and 65K05 \and 49J52 \and 90C30 \and 49M27}
\end{abstract}

\section{Introduction}
\label{intro}


Over the past few decades, {\it smoothing methods} have become a central theme in nonsmooth and nonconvex optimization, forming a framework for designing gradient-based iterative methods. 
The underlying philosophy is simple yet powerful: a nonsmooth objective is replaced by a carefully designed smooth approximation that retains the main geometric and variational features of the original problem. 
This transformation enables the application of efficient first-order and generalized second-order methods, which often exhibit superior convergence behavior compared to classical subgradient or bundle-type schemes \cite{Beck12,nesterov2005smooth}. 
Accordingly, a wide variety of smoothing mechanisms have been proposed, ranging from {\it penalty} and {\it barrier smoothing} to {\it Nesterov’s dual smoothing} \cite{nesterov2005smooth} and convolution-based constructions such as {\it infimal} and {\it integral convolution} \cite{Bauschke17,Beck12,Burke2017,Burke2013Gradient,Chen2012}. 
These ideas have proved remarkably versatile, leading to numerous algorithmic developments in composite and structured optimization problems; see, e.g.,~\cite{Beck12,ben2006smoothing,Bot15,Bot2020,Ahookhosh21,patrinos2013proximal,Stella17,Themelis18,themelis2019acceleration,themelis2020douglas}. 

Among all the smoothing paradigms, the {\it Moreau envelope} \cite{Moreau65} occupies a distinguished position due to its simplicity and favorable properties.
Let $\gf: \R^n\to\Rinf:=\R\cup \{+\infty\}$ be a proper and lower semicontinuous (lsc) function. Then the {\it Moreau envelope} \cite{Moreau65} is given by
\begin{equation}\label{eq:mor}
    \fgam{\gf}{}{\gamma}(x):=\mathop{\bs{\inf}}\limits_{y\in \R^n} \left\{\gf(y)+\frac{1}{2\gamma}\Vert x- y\Vert^2\right\},
\end{equation}
for $\gamma>0$. Under suitable assumptions, the Moreau envelope provides a smooth reformulation of $\gf$ via proximal regularization, preserving the set of minimizers while improving analytical tractability. 
Owing to these nice properties, the Moreau envelope and its associated {\it proximal-point operator} have become the foundation for a broad spectrum of modern algorithms \cite{Beck12,Bot15,Burke13,Drusvyatskiy19,Ghaderi24,Kabgani25itsdeal,Kabgani24itsopt,KecisThibault15,Parikh14,Poliquin96,Rockafellar09,Shefi16}.
In recent works, high-order proximal operators and their corresponding envelopes, which generalize the classical Moreau regularization to powers $p>1$, have been revisited \cite{Ahookhosh2025,Ahookhosh2025Minimizing,Kabgani24itsopt,Kabgani25itsdeal,KecisThibault15}; see Definition~\ref{def:home}. The parameter $p$ introduces an additional degree of freedom that controls the strength and geometry of the smoothing techniques \cite{Kabganidiff}, and it has proven useful for achieving superlinear convergence rates in the class of uniformly quasiconvex functions, including strongly and uniformly convex functions \cite{Ahookhosh2025}.

However, in many practical applications, the objective function is of a composite form; i.e., $\gf$ can be expressed as a composite function
$\gf(x) := f(x) + g(x)$, where $f:\R^n\to\R$ is a continuously differentiable (possibly nonconvex) function, and $g:\R^n\to\Rinf$ is a proper lower semicontinuous (possibly nonsmooth and nonconvex) function. Accordingly, the forward-backward envelope (FBE) was introduced in \cite{patrinos2013proximal} and further analyzed in \cite{Stella17,Themelis18, themelis2019acceleration} as an effective smoothing tool, given by
\begin{equation}\label{eq:fbsGeneral}
\fgam{\gf}{}{\gamma}(x):=\mathop{\bs{\inf}}\limits_{y\in \R^n} \left\{f(x)+\langle \nabla f(x), y-x\rangle+g(y)+\frac{1}{2\gamma}\Vert x- y\Vert^2\right\},
\end{equation}
where $f$ is $L$-smooth with $L>0$ and $\gamma\in(0,1/L]$. Many important properties of FBE, such as the preservation of the same infimum value as the original function $\gf$, depend on $L$-smoothness of $f$. In this case,  the classical \textit{descent lemma}~\cite[Proposition A.24]{bertsekas1999nonlinear}
applies to the smooth part $f$, namely
\begin{equation}\label{eq:lipdesc}
f(y)\leq \mathcal{M}(x,y):= f(x)+\langle \nabla f(x), y-x\rangle +\frac{L}{2}\Vert y - x\Vert^2,\qquad \forall x, y\in \R^n.
\end{equation}
Consequently, by adding $g(y)$ to both sides of \eqref{eq:lipdesc}, one obtains the majorization
$\gf(y)=f(y)+g(y)\le \mathcal{M}(x,y)+g(y)$,
so that the function
$\psi(x,y):=  \mathcal{M}(x,y)+g(y)$ acts as a majorant of the composite objective $\gf$.
This provides a suitable framework for developing splitting methods for composite optimization problems
\cite{Ahookhosh24,ahookhosh2021high,beck2017first,bonettini2025linesearch,combettes2005signal,lange2016mm,Stella17,sun2016majorization,Themelis18,Nesterov2023a}.

We note that $L$-smoothness of $f$ excludes many practical cases in which $f$ is only \textit{weakly smooth}, that is, when $\nabla f$ is H\"{o}lder continuous with exponent $\nu\in(0,1]$ (see Definition~\ref{def:holdgrad}); see, e.g., 
\cite{ahookhosh2019accelerated,berger2020quality,Nesterov15univ,yashtini2016global}, leading to
\begin{equation}\label{eq:holdmajf}
f(y)\leq  f(x) + \langle \nabla f(x), y - x \rangle+\frac{L_{\nu}}{1 + \nu}\Vert x - y \Vert ^{1+\nu},\qquad \forall x, y\in \R^n.
\end{equation}
This result indicates that $f$ can be majorized by a non-quadratic term of degree $p=1+\nu>1$, extending the classical descent lemma and motivating the use of \textit{high-order regularization} \cite{Bredies09, Nesterov15univ}.
Such generalized models have recently attracted attention in the context of weakly smooth optimization frameworks~\cite{Rodomanov2020,Cartis2017Worst,Nesterov15univ}.
In \cite{kabgani2025fb}, we studied the {\it high-order descent lemma}, stated as
\begin{equation}\label{eq:highOrdmajf}
f(y)\leq  f(x) + \langle \nabla f(x), y - x \rangle+\frac{L_p}{p}\Vert x - y \Vert ^{p},\qquad \forall x, y\in \R^n,
\end{equation}
for $p>1$, which was used to introduce a {\it high-order majorization-minimization framework}. 
Building upon a broader concept of the high-order majorant that generalizes beyond the H\"{o}lderian model in \eqref{eq:holdmajf}, we introduce the \textit{high-order forward–backward splitting} (HiFBS) mapping and the \textit{high-order forward–backward envelope} (HiFBE); see Definition~\ref{def:HFBE}.
These constructions enable the design of algorithms with regularization terms of arbitrary order~$p>1$, 
thereby covering composite models for which the standard quadratic forward-backward theory is not applicable.

Relaxing the $L$-smoothness assumption is also important from an application viewpoint. In several problems from signal processing and machine learning, the data-fidelity term is naturally weakly smooth, while its gradient is not globally Lipschitz continuous. A representative example is the regularized inverse problem
\[
\mathop{\bs{\min}}\limits_{x\in \R^n}\gf(x):=\frac{1}{q}\|Ax-b\|_q^q+g(x),
\]
with $q\in(1,2)$,
which arises in robust signal reconstruction and sparse recovery under impulsive or heavy-tailed noise. In this case, the smooth part $f(x):=\frac{1}{q}\|Ax-b\|_q^q$
has a $(q-1)$-H\"older continuous gradient, whereas $\nabla f$ is, in general, not Lipschitz continuous when $q<2$; see, e.g., \cite[Example~22]{kabgani2025fb}. Such subquadratic losses are also standard in robust regression and learning models, precisely because they are less sensitive to outliers than the quadratic loss. 
For a nonconvex example, consider a DC-type subquadratic model of the form
\[
f(x):=\frac{1}{q}\|Ax-b\|_q^q-\frac{\lambda}{q}\|Cx-d\|_q^q,
\]
where $q\in(1,2)$ and $\lambda>0$. This function remains continuously differentiable with a $(q-1)$-H\"older continuous gradient.
Therefore, the restriction to the classical quadratic majorization is too narrow for these applications, while the high-order model in \eqref{eq:holdmajf}, and more generally in \eqref{eq:highOrdmajf}, remains applicable with $p=1+\nu$. This provides a concrete motivation for studying HiFBS and HiFBE beyond the $L$-smooth setting.

While the algorithmic properties of HiFBS and HiFBE have been investigated in~\cite{kabgani2025fb}, several fundamental analytical aspects of HiFBS and HiFBE remain unexplored.  
In particular, the questions of  \textit{single-valuedness} and  \textit{continuity} of HiFBS, as well as the \textit{differentiability} of HiFBE, have not yet been characterized.  
These properties are central to understanding the behavior of high-order envelopes, linking them to high-order Moreau envelopes~\cite{Kabgani24itsopt,Kabganidiff} and to the convergence of high-order proximal methods.  
The results presented in this paper provide new insights into the analytical structure of the high-order forward-backward envelope, thus bridging variational analysis, smooth approximation, and high-order optimization theory. Consequently, this development facilitates the application of HiFBE in the design of gradient-based first-order methods.

\vspace{-4mm}
\subsection{{\bf Contribution}}\label{sec:contribution}

The contributions of this paper are summarized as follows:

\begin{description}[wide, labelwidth=!, labelindent=0pt]
 
     \item[{\bf(i)}] {\bf  Fundamental properties of HiFBS and HiFBE.} 
        We establish a rigorous framework for HiFBS and its associated envelope, HiFBE. In particular, we prove the boundedness and uniform boundedness of HiFBS on bounded sets and derive the H\"{o}lder and Lipschitz continuity properties of HiFBE. These results form the analytical foundation for our subsequent developments on differentiability and weak smoothness.
    \item[{\bf(ii)}] {\bf Differentiability and weak smoothness of HiFBE.}  
        We derive explicit expressions for the Fr\'{e}chet (regular) and Mordukhovich (limiting) subdifferentials of HiFBE and establish necessary and sufficient conditions for its differentiability. By leveraging the prox-regularity of $g$ and the concept of $p$-calmness, we demonstrate the local single-valuedness, continuity, and H\"{o}lder continuity of HiFBS. These properties, in turn, ensure the differentiability of HiFBE in a neighborhood of $p$-calm points. Furthermore, under mild assumptions regarding the smoothness of $f$ and the prox-boundedness of $g$, we prove the weak smoothness of HiFBE and explicitly quantify its H\"{o}lder continuity order.

\end{description}

\subsection{{\bf Organization}}\label{sec:organization}
The remainder of this paper is organized as follows. 
Section~\ref{sec:prelim} introduces the necessary preliminaries and notation. 
In Section~\ref{sec:hifbe}, after recalling the definitions of HiFBS and HiFBE, we study the boundedness and uniform boundedness of HiFBS, as well as the H\"{o}lder and Lipschitz continuity properties of HiFBE. 
Section~\ref{sec:diff} studies the subdifferential structure, differentiability, and weak smoothness of HiFBE, together with the local single-valuedness and continuity of HiFBS. 
Finally, Section~\ref{sec:disc} concludes the paper and discusses possible directions for future research.



\section{Preliminaries and notation}
\label{sec:prelim}

Throughout this paper, $\R^n$ denotes the $n$-dimensional \textit{Euclidean space}, 
while $\Vert \cdot \Vert$ and $\langle \cdot, \cdot \rangle$ represent the \textit{Euclidean norm}
 and \textit{inner product}, respectively. 
We denote the set of \textit{natural numbers} by $\mathbb{N}$ and let $\Nz := \mathbb{N} \cup \{0\}$. 
The set $\mb(\ov{x}; r)$ is the \textit{open ball} centered at $\ov{x} \in \R^n$ with radius $r > 0$. 
The \textit{interior} and \textit{closure} of a set $C \subseteq \R^n$ are denoted by $\inter{C}$ and 
$\closure{C}$, respectively. 
We adopt the convention $\infty - \infty = \infty$.
The \textit{effective domain} of $\gh: \R^n \to \Rinf := \R \cup \{+\infty\}$ is $\dom{\gh} := \{x \in \R^n \mid \gh(x) < +\infty\}$,
and $\gh$ is called \textit{proper} if $\dom{\gh} \neq \emptyset$. 
The set of \textit{minimizers} of $\gh$ over $C \subseteq \R^n$ is denoted by $\argmin{x \in C} \gh(x)$. 
The function $\gh$ is \textit{lower semicontinuous} (lsc) at $\ov{x} \in \R^n$ if $\bs\liminf_{k \to \infty} \gh(x^k) \geq \gh(\ov{x})$ for every sequence $x^k \to \ov{x}$. 
For a set-valued map $\Psi: \R^n \rightrightarrows \R^n$, the \textit{domain} is defined as $\Dom{\Psi} := \{x \in \R^n \mid \Psi(x) \neq \emptyset\}$. 

If $p > 1$, the gradient of $\frac{1}{p} \Vert x \Vert^p$ is $\nabla \left(\frac{1}{p} \Vert x \Vert^p \right) = \Vert x \Vert^{p-2} x$, 
where the convention $\frac{0}{0} = 0$ is adopted for $x = 0$. For a sequence $\{x^k\}_{k\in \Nz}$, 
 $\ov{x}$ is the \textit{limit} of the sequence if
$x^k\to \ov{x}$, and $\widehat{x}$ is a \textit{cluster point} of this sequence if  there exist an infinite subset $J\subseteq \Nz$ and a subsequence $\{x^{j}\}_{j\in J}$ such that $x^j\to \widehat{x}$.

The following elementary relations will be used repeatedly in the sequel to bound higher-order terms and to establish the monotonicity-type inequalities.
\begin{lemma}[Basic inequalities]\label{lem:findlowbounknu:lemma}
Let $a, b\in \R^n$.
\begin{enumerate}[label=(\textbf{\alph*}), font=\normalfont\bfseries, leftmargin=0.7cm]
\item\label{lem:basicineq:a} For each $p\in (1,2]$, there exists some $\kappa_p>0$ such that for any  $r>0$ and $a, b\in \mb(0; r)$,
\[
\langle \Vert a\Vert^{p-2}a - \Vert b\Vert^{p-2}b, a-b\rangle\geq \kappa_pr^{p-2}\Vert a - b\Vert^2.
\]
\item\label{lem:basicineq:b} For each $p\geq 1$ and $a, b\in \R^n$, 
\begin{equation}\label{eq:baseq:pgen}
\Vert a-b\Vert^p\leq 2^{p-1}\left(\Vert a\Vert^p+\Vert b\Vert^p\right).
\end{equation}
\end{enumerate}
\end{lemma}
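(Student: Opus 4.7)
The plan is to treat the two parts separately, as (b) is an elementary consequence of triangle inequality plus convexity, whereas (a) requires a calculus-of-gradient argument.

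For part \ref{lem:basicineq:b}, I would first apply the triangle inequality to get $\Vert a-b\Vert^p\leq (\Vert a\Vert+\Vert b\Vert)^p$, and then invoke the convexity of $t\mapsto t^p$ on $[0,\infty)$ for $p\geq 1$. Jensen's inequality with equal weights on $\Vert a\Vert$ and $\Vert b\Vert$ gives $\bigl((\Vert a\Vert+\Vert b\Vert)/2\bigr)^p\leq (\Vert a\Vert^p+\Vert b\Vert^p)/2$, which rearranges to \eqref{eq:baseq:pgen}.

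For part \ref{lem:basicineq:a}, the key idea is to view $F(x):=\Vert x\Vert^{p-2}x$ as $\nabla\!\left(\tfrac{1}{p}\Vert x\Vert^p\right)$ and exploit a uniform lower bound on its Jacobian. A direct differentiation on $\R^n\setminus\{0\}$ gives $\nabla F(x)=\Vert x\Vert^{p-2}I+(p-2)\Vert x\Vert^{p-4}xx^\top$, whose smallest eigenvalue equals $(p-1)\Vert x\Vert^{p-2}$ (achieved in the direction of $x$, the orthogonal directions yielding $\Vert x\Vert^{p-2}$). Since $p\in(1,2]$, both quantities are positive, so $\langle \nabla F(x)v,v\rangle\geq (p-1)\Vert x\Vert^{p-2}\Vert v\Vert^2$ for every $v\in\R^n$.

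I would then parameterize the segment joining $b$ and $a$ by $x_t:=(1-t)b+ta$, $t\in[0,1]$, and apply the fundamental theorem of calculus to the (absolutely continuous) path $t\mapsto F(x_t)$ to write
\begin{equation*}
\langle F(a)-F(b),a-b\rangle = \int_0^1 \langle \nabla F(x_t)(a-b),a-b\rangle\,dt \geq (p-1)\Vert a-b\Vert^2\int_0^1 \Vert x_t\Vert^{p-2}\,dt.
\end{equation*}
Since $a,b\in \mb(0;r)$ and $\mb(0;r)$ is convex, $\Vert x_t\Vert\leq r$ for all $t\in[0,1]$, and because $p-2\leq 0$ this pointwise gives $\Vert x_t\Vert^{p-2}\geq r^{p-2}$. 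Integrating yields the bound with $\kappa_p:=p-1$.

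The main technical point to address is the case where the segment $[b,a]$ passes through the origin, which makes $\Vert x_t\Vert^{p-2}$ blow up at the crossing time. This does not spoil the argument: for $p>1$ the exponent $p-2>-1$ keeps the integrand integrable; the map $F$ extends continuously to the origin because $\Vert F(x)\Vert=\Vert x\Vert^{p-1}\to 0$; and $t\mapsto F(x_t)$ remains absolutely continuous on $[0,1]$, so the fundamental theorem of calculus still applies. In fact, the blow-up only reinforces the pointwise lower bound $\Vert x_t\Vert^{p-2}\geq r^{p-2}$, so the conclusion is unaffected.
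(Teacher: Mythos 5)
Your proof is correct. Note, however, that the paper does not prove this lemma at all: it simply cites \cite[Lemma~2]{Kabganidiff} for part (a) and \cite[Lemma~2.1]{Kabgani24itsopt} for part (b), so there is no in-paper argument to compare against. What you supply is a self-contained derivation. Part (b) via the triangle inequality plus convexity of $t\mapsto t^p$ is exactly the standard argument. For part (a), your route --- identifying $F(x)=\Vert x\Vert^{p-2}x$ as $\nabla\bigl(\tfrac1p\Vert x\Vert^p\bigr)$, computing the Jacobian's smallest eigenvalue $(p-1)\Vert x\Vert^{p-2}$, and integrating along the segment $[b,a]$ while using $\Vert x_t\Vert\le r$ and $p-2\le 0$ to bound $\Vert x_t\Vert^{p-2}\ge r^{p-2}$ --- is the standard mechanism behind such strong-monotonicity estimates on bounded sets for $p\in(1,2]$, and it yields the explicit constant $\kappa_p=p-1$, which is all the statement requires. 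The only place where I would tighten the writing is the degenerate case where the segment meets the origin: asserting absolute continuity of $t\mapsto F(x_t)$ from integrability of the derivative alone is not quite a complete justification (integrable a.e.\ derivative plus continuity does not in general imply absolute continuity). The clean fix is routine: $F$ is locally Lipschitz away from $0$ and continuous at $0$, so apply the fundamental theorem of calculus on $[0,t_0-\varepsilon]$ and $[t_0+\varepsilon,1]$ and let $\varepsilon\downarrow 0$, using $p-2>-1$ for integrability of $\Vert x_t\Vert^{p-2}$. With that one-line repair, the argument is complete.
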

\begin{proof}
    For Assertion~\ref{lem:basicineq:a}, see \cite[Lemma 2]{Kabganidiff}, and for  Assertion~\ref{lem:basicineq:b}, see \cite[Lemma 2.1]{Kabgani24itsopt}.
\end{proof}

Since the analysis of HiFBE involves nonsmooth and possibly nonconvex components, we recall the two main notions of generalized gradients used throughout the paper.

\begin{definition}[Fr\'{e}chet and Mordukhovich subdifferentials]\label{def:subdiff}
The \textit{Fr\'{e}chet/regular} and \textit{Mordukhovich/limiting subdifferentials} of a function $\gh: \R^n\to\Rinf$ at $\ov{x}\in \dom{\gh}$ are defined, respectively, as~\cite{Mordukhovich2018,Rockafellar09}
\[
\widehat{\partial}\gh(\ov{x}):=\left\{\zeta\in \R^n\mid~\mathop{\bs\liminf}\limits_{x\to \ov{x}}\frac{\gh(x)- \gh(\ov{x}) - \langle \zeta, x - \ov{x}\rangle}{\Vert x - \ov{x}\Vert}\geq 0\right\},
\]
and
\[
\partial \gh(\ov{x}):=\left\{\zeta\in \R^n\mid~\exists x^k\to \ov{x}, \zeta^k\in \widehat{\partial}\gh(x^k),~\text{with}~\gh(x^k)\to \gh(\ov{x})~\text{and}~ \zeta^k\to \zeta\right\}.
\]
\end{definition}
We have $\widehat{\partial}\gh(\ov{x})\subseteq\partial \gh(\ov{x})$.
These subdifferentials will be used 
to characterize the differentiability of HiFBE.
Next, we recall smoothness notions.
A function $\gh: \R^n \to \R$  is  \textit{Fr\'{e}chet differentiable} at $\ov{x}\in \inter{\dom{\gh}}$ 
with \textit{Fr\'{e}chet derivative}  
$\nabla \gh(\ov{x})$
 if 
\[
\mathop{\bs\lim}\limits_{x\to \ov{x}}\frac{\gh(x) -\gh(\ov{x}) - \langle \nabla \gh(\ov{x}) , x - \ov{x}\rangle}{\Vert x - \ov{x}\Vert}=0.
\]
For a set $C\subseteq\R^n$, the notation $\gh\in \mathcal{C}^{k}(C)$ indicates that $\gh$ is $k$-times continuously differentiable on $C$, where $k\in \mathbb{N}$. 

\begin{definition}[H\"{o}lder property]\label{def:hold}
A proper function $\gh: \R^n \to \Rinf$ has a
\textit{$\nu$-H\"{o}lder property} with $\nu \in (0,1]$ on a set $C\subseteq \dom{\gh}$ if there exists some $L_\nu > 0$ such that 
\[
\vert\gh(x) - \gh(y)\vert \leq L_\nu \| x - y \|^\nu, \qquad \forall x, y \in C.
\]
\end{definition}
The H\"{o}lder property serves as a basic building block for defining weak smoothness and generalizing the Lipschitz continuity of the gradient.
The class of functions with H\"{o}lder continuous gradients has recently garnered increased attention for its applications in optimization \cite{berger2020quality,bolte2023backtrack,Cartis2017Worst,Nesterov15univ,yashtini2016global}.

\begin{definition}[H\"{o}lder continuous gradient]\label{def:holdgrad}
A function $\gh: \R^n \to \R$ has a \textit{$\nu$-H\"{o}lder continuous gradient} on $\R^n$ with $\nu \in (0, 1]$ if it is Fr\'{e}chet differentiable and 
there exists a constant $L_\nu\geq 0$ such that
\begin{equation}\label{eq:nu-Holder continuous gradient}
\Vert \nabla \gh(y)- \nabla \gh(x)\Vert \leq L_\nu \Vert y-x\Vert^\nu, \qquad \forall x, y\in \R^n.
\end{equation}
\end{definition}
The class of functions with $\nu$-H\"{o}lder continuous gradients is denoted by $\mathcal{C}^{1, \nu}_{L_\nu}(\R^n)$ and referred to as \textit{weakly smooth}, and if $\nu=1$, $\gh$ is called \textit{$L$-smooth}. 

The following H\"olderian descent lemma is stated in \cite[Lemma~1]{yashtini2016global} for a function $\gh\in \mathcal{C}^{1, \nu}_{L_{\nu}}(\R^n)$. 
\begin{fact}[H\"olderian descent lemma]\label{fact:holder:declem}\cite{yashtini2016global}
Let $\gh\in \mathcal{C}^{1, \nu}_{L_{\nu}}(\R^n)$ with $\nu\in (0, 1]$. Then, for all $x, y\in \R^n$, 
\begin{equation}\label{eq:upperbounf for c1,alp}
\left\vert\gh(y)- \gh(x) - \langle \nabla \gh(x), y - x \rangle\right\vert\leq\frac{L_{\nu}}{1 + \nu}\Vert y-x \Vert ^{1+\nu}.
\end{equation}
\end{fact}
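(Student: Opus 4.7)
The plan is to follow the standard route via the fundamental theorem of calculus along the segment joining $x$ and $y$, combined with the H\"older bound on $\nabla \gh$ from Definition~\ref{def:holdgrad}. Since $\gh$ is Fr\'echet differentiable on all of $\R^n$ with continuous gradient (H\"older continuity implies continuity), the composition $t\mapsto \gh(x+t(y-x))$ is continuously differentiable on $[0,1]$, so the integral representation applies.

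First I would write
\[
\gh(y)-\gh(x) \;=\; \int_0^1 \langle \nabla \gh(x+t(y-x)),\, y-x\rangle\, dt,
\]
and subtract the trivial identity $\langle \nabla \gh(x), y-x\rangle = \int_0^1 \langle \nabla \gh(x), y-x\rangle\, dt$ to obtain
\[
\gh(y)-\gh(x)-\langle \nabla \gh(x), y-x\rangle \;=\; \int_0^1 \langle \nabla \gh(x+t(y-x))-\nabla \gh(x),\, y-x\rangle\, dt.
\]

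Next I would take absolute values, move them inside the integral, and apply the Cauchy--Schwarz inequality followed by the H\"older estimate \eqref{eq:nu-Holder continuous gradient} with the pair of points $x+t(y-x)$ and $x$, which gives
\[
\Vert \nabla \gh(x+t(y-x)) - \nabla \gh(x)\Vert \;\leq\; L_\nu\, t^{\nu}\, \Vert y-x\Vert^{\nu}.
\]
Combining these yields the pointwise bound $t^{\nu} L_\nu \Vert y-x\Vert^{1+\nu}$ inside the integral, and integrating using $\int_0^1 t^{\nu}\, dt = \tfrac{1}{1+\nu}$ produces exactly the desired upper bound $\tfrac{L_\nu}{1+\nu}\Vert y-x\Vert^{1+\nu}$.

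There is essentially no serious obstacle here; the argument is a textbook computation and the only delicate point is justifying the interchange of the absolute value with the integral, which follows immediately from continuity of the integrand on the compact interval $[0,1]$. The lemma is stated as a \emph{Fact} with external citation \cite{yashtini2016global}, so a short self-contained proof along these lines suffices, and no use of the prox-regularity or envelope machinery developed later in the paper is required.
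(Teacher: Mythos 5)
Your proof is correct and is exactly the standard argument: the paper itself offers no proof for this statement (it is recorded as a \emph{Fact} with a citation to \cite{yashtini2016global}), and the proof in that reference is precisely the segment-integral representation, Cauchy--Schwarz, and the H\"older bound $\Vert \nabla\gh(x+t(y-x))-\nabla\gh(x)\Vert\le L_\nu t^\nu\Vert y-x\Vert^\nu$ integrated via $\int_0^1 t^\nu\,dt=\tfrac{1}{1+\nu}$. Nothing is missing; in particular, bounding the absolute value of the integral by the integral of the norm of the integrand already yields the two-sided estimate as stated.
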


We continue with the following useful lemma, which follows directly from Fact~\ref{fact:holder:declem} by applying it to the pairs $(x,y)$ and $(y,x)$ and then using the triangle inequality.

\begin{lemma}[H\"older estimation]\label{lem:hol:oneside} 
Let $\gh\in \mathcal{C}^{1, \nu}_{L_{\nu}}(\R^n)$ with $\nu\in (0, 1]$. Then, for each $x,y\in \R^n$,
\[
\left\vert\langle \nabla \gh(y)-\nabla \gh(x), y-x \rangle\right\vert\leq \frac{2L_\nu}{1+\nu} \Vert x - y \Vert ^{1+\nu}.
\]
\end{lemma}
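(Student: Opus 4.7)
The plan is to obtain the bound directly from the two-sided Hölderian descent inequality in Fact~\ref{fact:holder:declem} by exchanging the roles of $x$ and $y$ and then adding the two resulting estimates.

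First, I would apply Fact~\ref{fact:holder:declem} with the pair $(x,y)$, giving
\[
\left|\gh(y) - \gh(x) - \langle \nabla \gh(x), y - x\rangle\right| \le \frac{L_\nu}{1+\nu}\|y-x\|^{1+\nu}.
\]
Next, I would swap the roles of the arguments and apply it with the pair $(y,x)$, yielding
\[
\left|\gh(x) - \gh(y) - \langle \nabla \gh(y), x - y\rangle\right| \le \frac{L_\nu}{1+\nu}\|x-y\|^{1+\nu}.
\]

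Denote the two quantities inside the absolute values by $A$ and $B$, respectively. A direct algebraic manipulation shows that the function values $\gh(x),\gh(y)$ cancel in the sum and
\[
A + B = \langle \nabla \gh(y) - \nabla \gh(x), y - x \rangle.
\]
Applying the triangle inequality $|A+B|\le |A|+|B|$ together with the two displayed estimates then gives
\[
\left|\langle \nabla \gh(y) - \nabla \gh(x), y - x\rangle\right| \le \frac{2L_\nu}{1+\nu}\|y-x\|^{1+\nu},
\]
which is the claimed inequality.

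There is essentially no obstacle here: the argument is purely a symmetric application of Fact~\ref{fact:holder:declem} followed by the triangle inequality. The only thing to be careful about is the sign bookkeeping in $A+B$ so that the gradient terms indeed combine into $\langle \nabla \gh(y)-\nabla \gh(x), y-x\rangle$ rather than its negative, but this is routine. Note that Cauchy--Schwarz combined with \eqref{eq:nu-Holder continuous gradient} would yield the slightly tighter constant $L_\nu$ in place of $\frac{2L_\nu}{1+\nu}$; however, the form stated in the lemma is the one most directly inherited from the descent-lemma framework and is sufficient (and convenient) for the later envelope estimates.
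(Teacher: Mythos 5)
Your argument is correct: writing $A=\gh(y)-\gh(x)-\langle\nabla\gh(x),y-x\rangle$ and $B=\gh(x)-\gh(y)-\langle\nabla\gh(y),x-y\rangle$, the function values cancel and $A+B=\langle\nabla\gh(y)-\nabla\gh(x),y-x\rangle$, so the triangle inequality with two applications of Fact~\ref{fact:holder:declem} gives exactly the stated bound with constant $\tfrac{2L_\nu}{1+\nu}$. The paper itself offers no displayed proof but attributes the lemma to ``the Cauchy--Schwarz inequality and Fact~\ref{fact:holder:declem}''; the most direct reading of that hint is the one you mention in passing: $\vert\langle\nabla\gh(y)-\nabla\gh(x),y-x\rangle\vert\le\Vert\nabla\gh(y)-\nabla\gh(x)\Vert\,\Vert y-x\Vert\le L_\nu\Vert y-x\Vert^{1+\nu}$, which implies the claim because $1+\nu\le 2$ forces $L_\nu\le\tfrac{2L_\nu}{1+\nu}$. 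So your symmetrization route is a genuinely different (and equally short) derivation: it uses only the two-sided function-value estimate of the descent lemma, never the gradient H\"older condition or Cauchy--Schwarz directly, and therefore would survive in settings where one only has the majorization/minorization inequality \eqref{eq:upperbounf for c1,alp} rather than \eqref{eq:nu-Holder continuous gradient}; the price is the slightly looser constant $\tfrac{2L_\nu}{1+\nu}$ in place of $L_\nu$, which is precisely the constant the lemma asserts, so nothing is lost for the purposes of the paper. Your sign bookkeeping checks out, and your closing remark about the tighter constant is accurate.
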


The next concept captures a broad class of nonsmooth functions that admit quadratic local approximations
and will play an important role in the later analysis of HiFBS.

 \begin{definition}[Prox-regularity]\label{def:prox-regular}\cite{Poliquin96}
Let $\gh: \R^n\to \Rinf$ be a proper lsc function and let $\ov{x}\in\dom{\gh}$. Then, $\gh$ is said to be \textit{prox-regular} at $\ov{x}$ for $\ov{\zeta}\in\partial \gh(\ov{x})$ if there exist $\varepsilon>0$ and $\rho>0$ such that
\[
\gh(x')\geq \gh(x)+\langle \zeta, x'-x\rangle-\frac{\rho}{2}\Vert x'-x\Vert^2,~\qquad \forall x'\in \mb(\ov{x}; \varepsilon),
\]
whenever $x\in \mb(\ov{x}; \varepsilon)$, $\zeta\in \partial \gh(x)\cap \mb(\ov{\zeta}; \varepsilon)$, and 
$\vert \gh(x)-\gh(\ov{x})\vert<\varepsilon$.
\end{definition}

We now recall the notions of high-order proximal operators and their envelopes. 
These constructions are the analytical backbone of HiFBS and HiFBE.
We briefly review these concepts and related properties.

\begin{definition}[High-order proximal operator and Moreau envelope]\label{def:home}
Let $p>1$, $\gamma>0$, and $\gh: \R^n \to \Rinf$ be a proper function. 
The \textit{high-order proximal operator} (\textit{HOPE}) of $\gh$ with parameter $\gamma$,
$\prox{\gh}{\gamma}{p}: \R^n \rightrightarrows \R^n$, is 
    \begin{equation}\label{eq:Hiorder-Moreau prox}
       \prox{\gh}{\gamma}{p} (x):=\argmint{y\in \R^n} \left\{\gh(y)+\frac{1}{p\gamma}\Vert x- y\Vert^p\right\},
    \end{equation}     
and the \textit{high-order Moreau envelope} (\textit{HOME}) of $\gh$ with parameter $\gamma$, 
$\fgam{\gh}{\gamma,p}{}: \R^n\to \R\cup\{\pm \infty\}$, 
is 
    \begin{equation}\label{eq:Hiorder-Moreau env}
    \fgam{\gh}{\gamma,p}{}(x):=\mathop{\bs{\inf}}\limits_{y\in \R^n} \left\{\gh(y)+\frac{1}{p\gamma}\Vert x- y\Vert^p\right\}.
    \end{equation}
    \end{definition}

We recall the notion of high-order prox-boundedness from \cite{Kabgani24itsopt}.
\begin{definition}[High-order prox-boundedness]\label{def:s-prox-bounded}\cite[Definition~3.3]{Kabgani24itsopt}
A proper function $\gh:\R^n\to \Rinf$ is \textit{high-order prox-bounded} with order $p>1$ if there exist
$\gamma>0$ and $x\in \R^n$ such that
$\fgam{\gh}{\gamma,p}{}(x)>-\infty$. 
The supremum of all such $\gamma$ is denoted by $\gamma^{\gh, p}$ and is referred to as the threshold of high-order prox-boundedness for $\gh$.
\end{definition}

This property guarantees that the high-order proximal operator and envelope are well defined.
If $\gh:\R^n\to \Rinf$ is convex or bounded below, it satisfies Definition~\ref{def:s-prox-bounded} with $\gamma^{\gh, p}=+\infty$ \cite{Kabgani24itsopt}.

The next fact, given in \cite[Theorem~3.4]{Kabgani24itsopt}, ensures that HOPE and HOME are well defined under the prox-boundedness condition and will be used later to establish the existence and compactness of HiFBS solutions.
Although \cite[Theorem~3.4]{Kabgani24itsopt} is stated under a broader assumption, only the high-order prox-boundedness component of that theorem is needed here.
 \begin{fact}[Well-definedness of HOME and HOPE]\label{fact:level-bound+locally uniform}\cite[Theorem~3.4]{Kabgani24itsopt}
Let $p>1$ and $\gh: \R^n\to \Rinf$ be a proper lsc function that is high-order prox-bounded with threshold $\gamma^{\gh, p}>0$. For each $\gamma\in (0, \gamma^{\gh, p})$ and $x\in \R^n$,  the set $\prox{\gh}{\gamma}{p}(x)$ is nonempty and compact, and $\hfgam{\gh}{p}{\gamma}(x)$ is finite.
 \end{fact}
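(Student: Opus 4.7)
The plan is to reduce all three conclusions—finiteness of $\hfgam{\gh}{p}{\gamma}(x)$, nonemptiness of $\prox{\gh}{\gamma}{p}(x)$, and its compactness—to the existence of a global lower bound on $\gh$ supplied by prox-boundedness, followed by a coercivity argument on the minimization problem \eqref{eq:Hiorder-Moreau prox}. First, I would unpack the supremum defining $\gamma^{\gh,p}$: since $\gamma<\gamma^{\gh,p}$, there exist $\gamma_0\in(\gamma,\gamma^{\gh,p})$, a point $x_0\in\R^n$, and $c\in\R$ with $\hfgam{\gh}{p}{\gamma_0}(x_0)\geq -c$, which is equivalent to the pointwise bound
\[
\gh(y)\geq -c-\frac{1}{p\gamma_0}\|x_0-y\|^p, \qquad \forall y\in\R^n.
\]

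Substituting this into $\Phi_x(y):=\gh(y)+\frac{1}{p\gamma}\|x-y\|^p$ yields $\Phi_x(y)\geq -c+\frac{1}{p\gamma}\|x-y\|^p-\frac{1}{p\gamma_0}\|x_0-y\|^p$, and the problem reduces to showing this is coercive in $y$. Because $\gamma<\gamma_0$, the leading asymptotic term as $\|y\|\to\infty$ is $\frac{1}{p}(\frac{1}{\gamma}-\frac{1}{\gamma_0})\|y\|^p\to+\infty$; to make this rigorous I would either use $\|x-y\|\geq\|y\|-\|x\|$ and $\|x_0-y\|\leq \|y\|+\|x_0\|$ for large $\|y\|$, or equivalently a Peter--Paul refinement $\|x_0-y\|^p\leq(1+\epsilon)\|x-y\|^p+C_{p,\epsilon}\|x_0-x\|^p$ with $\epsilon>0$ chosen so that $\frac{1+\epsilon}{\gamma_0}<\frac{1}{\gamma}$. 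Coercivity then yields $\hfgam{\gh}{p}{\gamma}(x)>-\infty$ along with bounded sublevel sets of $\Phi_x$; combined with $\Phi_x(\bar y)<+\infty$ for any $\bar y\in\dom{\gh}$ (nonempty since $\gh$ is proper), this delivers $\hfgam{\gh}{p}{\gamma}(x)\in\R$.

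Finally, to produce a minimizer and prove compactness, I would take a minimizing sequence $\{y^k\}$; coercivity forces it to be bounded, so along a subsequence $y^k\to y^*$. Lower semicontinuity of $\gh$ together with continuity of $\|x-\cdot\|^p$ makes $\Phi_x$ lsc, so $\Phi_x(y^*)\leq\liminf_k \Phi_x(y^k)=\hfgam{\gh}{p}{\gamma}(x)$, whence $y^*\in\prox{\gh}{\gamma}{p}(x)$. The minimizer set is closed as a sublevel set of an lsc function and bounded by the coercivity estimate, hence compact. The main obstacle is the sharpness of the coercivity threshold: the naive inequality \eqref{eq:baseq:pgen} from Lemma~\ref{lem:findlowbounknu:lemma}\ref{lem:basicineq:b} only yields coercivity for $\gamma<\gamma_0/2^{p-1}$, whereas we need it for every $\gamma<\gamma_0$. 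The Peter--Paul refinement (or, alternatively, the direct asymptotic comparison of $\|x-y\|^p$ and $\|x_0-y\|^p$) is precisely what closes this gap.
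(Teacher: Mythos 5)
Your argument is correct, and it is essentially the standard proof of this result: the paper itself states it as a Fact imported from \cite[Theorem~3.4]{Kabgani24itsopt} without reproducing the argument, and the proof there rests on the same two ingredients you use, namely the global lower bound $\gh(y)\geq -c-\tfrac{1}{p\gamma_0}\Vert x_0-y\Vert^p$ extracted from prox-boundedness at some $\gamma_0>\gamma$, followed by coercivity (level-boundedness) of $y\mapsto \gh(y)+\tfrac{1}{p\gamma}\Vert x-y\Vert^p$ together with lower semicontinuity to get finiteness of the infimum and nonemptiness and compactness of the argmin. Your remark that the crude bound \eqref{eq:baseq:pgen} is too lossy to reach every $\gamma<\gamma_0$ and that a Peter--Paul refinement of the triangle inequality is needed is exactly the right point of care.
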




\section{High-order forward-backward splitting mapping and envelope}\label{sec:hifbe}

In this section, we recall the \textit{high-order forward–backward splitting mapping} (HiFBS) and the associated \textit{high-order forward–backward envelope} (HiFBE), which are central to our analysis. After recalling essential properties and proving boundedness of HiFBS, we study the H\"{o}lder and Lipschitz properties of HiFBE in Subsection~\ref{sub:Holpro}. Moreover, by recalling the notion of \textit{calm points} and deriving their consequences, we obtain a \textit{uniform boundedness} property for HiFBS in Subsection~\ref{subsec:calm}. These results and notions are crucial for proving the differentiability and weak smoothness of HiFBE in the subsequent section.

The following constructions extend the classical FBE/FBS ($p=2$) to arbitrary orders $p>1$ and, in particular, cover weakly smooth models when $p=1+\nu$ with $\nu\in(0,1]$.
\begin{definition}[High-order forward-backward splitting mapping and envelope]\label{def:HFBE}\cite{kabgani2025fb}
Let $p> 1$, $\gamma>0$, $f:\R^n\to \R$ be Fr\'{e}chet differentiable, and let $g:\R^n\to \Rinf$ be a proper lsc function.
The \textit{high-order forward-backward splitting mapping} (HiFBS) of $\gf(x):=f(x)+g(x)$ with parameter $\gamma$, $ \Tprox{\gf}{\gamma}{p}: \R^n \rightrightarrows \R^n$, is defined as
\begin{equation}\label{HFBM}
\Tprox{\gf}{\gamma}{p} (x):=\argmint{y\in \R^n}\left\{f(x)+\langle \nabla f(x) , y - x\rangle +g(y)+\frac{1}{p\gamma}\Vert x-y\Vert^p\right\}.
\end{equation}
 The \textit{high-order forward-backward envelope} (HiFBE) of $\gf$ with parameter $\gamma$, $\fgam{\gf}{p}{\gamma}: \R^n\to \R\cup\{\pm \infty\}$, is defined as
\begin{equation}\label{HFBE}
 \fgam{\gf}{p}{\gamma}(x):=\mathop{\bs{\inf}}\limits_{y\in \R^n}\left\{f(x)+\langle \nabla f(x) , y - x\rangle +g(y)+\frac{1}{p\gamma}\Vert x-y\Vert^p\right\}.
\end{equation}
\end{definition}

Setting $p=2$ in the definition of HiFBE, one \textit{recovers} the classical forward–backward envelope; see \cite[Definition~2.1]{Stella17}. 
The set-valued residual mapping $R_{\gamma,p}:\R^n\rightrightarrows \R^n$ is defined by
$\bs R_{\gamma , p}(x):=x - \Tprox{\gf}{\gamma}{p} (x)$ and we set
\begin{equation}\label{eq:ellfun} 
  \ell(x,y):=f(x)+\langle \nabla f(x) , y - x\rangle +g(y).
\end{equation}
For $p=2$, we have the identity $\Tprox{\gf}{\gamma}{2} (x)= \prox{g}{\gamma}{2} (x-\gamma\nabla f(x))$ \cite[Eq.~(1.7)]{Stella17}, which aligns the forward–backward splitting mapping with the standard proximal operator. For $p\neq 2$, however, this relationship between HiFBS at $x$ for $\gf$ and HOPE at $x-\gamma\nabla f(x)$ for $g$ \textit{fails}, even under convexity; see \cite[Remark~24]{kabgani2025fb}. Consequently, results developed for HOME and HOPE in \cite{Kabgani24itsopt} cannot be directly applied to HiFBE and HiFBS and require a dedicated analysis.

In some of our results, we consider the following assumptions. 
\begin{assumption}[Assumptions on $f$]\label{assum:mainassum:f} 
The function
$f:\R^n\to \R$ is Fr\'{e}chet differentiable and satisfies
$f\in \mathcal{C}^{1, \nu}_{L_\nu}(\R^n)$ with $\nu\in (0,1]$. 
\end{assumption}

\begin{assumption}[Assumptions on $g$]\label{assum:mainassum:g} 
The function $g:\R^n\to \Rinf$ is proper, lsc, and
   high-order prox-bounded with the threshold $\gamma^{g, p}>0$.
\end{assumption}

The following Fact summarizes several basic properties of HiFBE that will be used throughout this section.
\begin{fact}[Fundamental properties of HiFBE]\label{th:basichifbe}\cite[Theorem~25]{kabgani2025fb}
Let $p>1$, $f:\R^n\to \R$ be Fr\'{e}chet differentiable, and $g:\R^n\to \Rinf$ be a proper lsc function. For $\gf(x):=f(x)+g(x)$, the following statements hold:
\begin{enumerate}[label=(\textbf{\alph*}), font=\normalfont\bfseries, leftmargin=0.7cm]
\item \label{th:basichifbe:dom} $\dom{\fgam{\gf}{p}{\gamma}}=\R^n$ 
and $\fgam{\gf}{p}{\gamma}(x)\leq  \gf(x)$  for each $\gamma>0$ and $x\in \R^n$.
\end{enumerate}
If Assumption~\ref{assum:mainassum:f} holds and $p=1+\nu$, then for each $\gamma\in (0, L_\nu^{-1})$, 
 \begin{enumerate}[label=(\textbf{\alph*}), font=\normalfont\bfseries, leftmargin=0.7cm, start=2]
\item \label{th:basichifbe:ineqforp} $\gf(y)\leq \ell(x,y)+\frac{1}{p\gamma}\Vert x - y\Vert^p$ for all $x,y\in \R^n$. Moreover, 
if $y\in \Tprox{\gf}{\gamma}{p} (x)$, then $\gf(y)\leq \fgam{\gf}{p}{\gamma}(x)$;
\item \label{th:basichifbe:infimforp} for each $\mu\in (\gamma, L_\nu^{-1})$, $\bs\inf_{z\in\R^n}\gf(z)\leq \fgam{\gf}{p}{\mu}(x)\leq  \fgam{\gf}{p}{\gamma}(x)\leq  \gf(x)$ for each $x\in \R^n$;
\item \label{th:basichifbe:infim2forp}$\mathop{\bs{\inf}}\limits_{z\in\R^n}\gf(z)=\mathop{\bs{\inf}}\limits_{z\in\R^n}\fgam{\gf}{p}{\gamma}(z)$.
\end{enumerate}
If Assumption~\ref{assum:mainassum:g} holds,  then
 \begin{enumerate}[label=(\textbf{\alph*}), font=\normalfont\bfseries, leftmargin=0.7cm, start=5]
\item \label{th:basichifbe:finite} 
for each $x\in \R^n$ and $\gamma\in (0, \gamma^{g, p})$, $\fgam{\gf}{p}{\gamma}(x)$ is finite;

 \item \label{th:basichifbe:levelunif} the function $\Psi(y, x, \gamma):=\ell(x,y)+\frac{1}{p\gamma}\Vert x- y\Vert^p$  is
 level-bounded in $y\in\R^n$ locally uniformly in $(x, \gamma)\in\R^n\times (0, \gamma^{g, p})$. 
Additionally, this function is lsc;

\item \label{th:basichifbe:con} $\fgam{\gf}{p}{\gamma}$ depends continuously on $(x,\gamma)$ in $\R^n\times (0, \gamma^{g, p})$.
 \end{enumerate}
 If there exist $\gamma>0$ and $\ov{x}\in \R^n$ such that $\fgam{\gf}{p}{\gamma}(\ov{x})>-\infty$, then
  \begin{enumerate}[label=(\textbf{\alph*}), font=\normalfont\bfseries, leftmargin=0.7cm, start=8]
 \item \label{th:basichifbe:finproxb} $g$ is high-order prox-bounded.
  \end{enumerate}
\end{fact}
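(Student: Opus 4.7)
The plan is to reduce the hypothesis $\fgam{\gf}{p}{\gamma}(\ov{x})>-\infty$ to the defining inequality of high-order prox-boundedness for $g$ in Definition~\ref{def:s-prox-bounded}. Setting $v:=\nabla f(\ov{x})$ and pulling the $y$-independent terms $f(\ov{x})$ and $-\langle v,\ov{x}\rangle$ out of the infimum in \eqref{HFBE}, the hypothesis is equivalent to
\[
I\;:=\;\inf_{y\in\R^n}\left\{g(y)+\langle v,y-\ov{x}\rangle+\tfrac{1}{p\gamma}\Vert\ov{x}-y\Vert^p\right\}\;>\;-\infty.
\]
The task then reduces to absorbing the linear perturbation $\langle v,y-\ov{x}\rangle$ into a slightly enlarged $p$-th power term, so that what remains is of the HOME form $g(y)+\tfrac{1}{p\sigma}\Vert\ov{x}-y\Vert^p$ from Definition~\ref{def:home} for some $\sigma>0$.

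To carry this out, I would invoke Young's inequality with conjugate exponents $p$ and $q:=p/(p-1)$: for any $\mu>0$,
\[
\langle v,y-\ov{x}\rangle \;\leq\; \Vert v\Vert\,\Vert y-\ov{x}\Vert \;\leq\; \tfrac{1}{p\mu}\Vert y-\ov{x}\Vert^p + \tfrac{\mu^{q/p}}{q}\Vert v\Vert^q.
\]
Defining $\sigma:=\gamma\mu/(\gamma+\mu)\in(0,\gamma)$ so that $\tfrac{1}{\sigma}=\tfrac{1}{\gamma}+\tfrac{1}{\mu}$, this produces the pointwise upper bound
\[
g(y)+\langle v,y-\ov{x}\rangle+\tfrac{1}{p\gamma}\Vert\ov{x}-y\Vert^p \;\leq\; g(y)+\tfrac{1}{p\sigma}\Vert\ov{x}-y\Vert^p + \tfrac{\mu^{q/p}}{q}\Vert v\Vert^q.
\]
Taking the infimum over $y\in\R^n$ on both sides and rearranging yields $\fgam{g}{\sigma,p}{}(\ov{x})\geq I - \tfrac{\mu^{q/p}}{q}\Vert v\Vert^q > -\infty$, so by Definition~\ref{def:s-prox-bounded}, $g$ is high-order prox-bounded with order $p$, as claimed.

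The main technical care lies in applying Young's inequality on the \emph{upper} side of the linear term rather than the lower one; this places the HOME on the larger side of the inequality after taking the infimum and allows finiteness to be transferred from the HiFBE to the HOME. Beyond this choice, the argument is a short algebraic substitution followed by a direct appeal to the definition of high-order prox-boundedness.
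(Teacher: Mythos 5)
This statement is an imported result (Fact~\ref{th:basichifbe} cites \cite[Theorem~25]{kabgani2025fb}) and the paper gives no proof of it, so there is nothing in the source to compare your argument against line by line; I can only judge the proposal on its own terms. On those terms, the argument you give for assertion~\ref{th:basichifbe:finproxb} is correct. Pulling the constants $f(\ov{x})$ and $-\langle\nabla f(\ov{x}),\ov{x}\rangle$ out of the infimum, absorbing the linear term via Young's inequality with the splitting parameter $\mu$, and recombining $\frac{1}{p\gamma}+\frac{1}{p\mu}=\frac{1}{p\sigma}$ is exactly the right mechanism: the resulting pointwise bound $g(y)+\frac{1}{p\sigma}\Vert\ov{x}-y\Vert^{p}\geq I-\frac{\mu^{q/p}}{q}\Vert v\Vert^{q}$ shows $\fgam{g}{\sigma,p}{}(\ov{x})>-\infty$ and Definition~\ref{def:s-prox-bounded} applies. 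The degenerate case $\nabla f(\ov{x})=0$ causes no trouble, and the direction of the inequality (majorizing the perturbed objective by the HOME objective plus a constant, so that finiteness transfers downward to the HOME) is handled correctly.

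The genuine gap is one of scope: the statement you were asked to prove has eight assertions, \ref{th:basichifbe:dom} through \ref{th:basichifbe:finproxb}, and your proposal addresses only the last one. Assertion~\ref{th:basichifbe:dom} is immediate from taking $y=x$ in \eqref{HFBE}, but assertions \ref{th:basichifbe:ineqforp}--\ref{th:basichifbe:infim2forp} require the H\"olderian descent lemma (Fact~\ref{fact:holder:declem}) to show that $\ell(x,y)+\frac{1}{p\gamma}\Vert x-y\Vert^{p}$ majorizes $\gf(y)$ when $p=1+\nu$ and $\gamma<L_\nu^{-1}$, and assertions \ref{th:basichifbe:finite}--\ref{th:basichifbe:con} rest on a level-boundedness and parametric-minimization argument (of the type in \cite[Theorem~1.17]{Rockafellar09}) combined with a lower bound of the same Young-type flavor you used, but now made locally uniform in $(x,\gamma)$. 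None of this is present, so as a proof of the full Fact the proposal is substantially incomplete, even though the one piece it does contain is sound.
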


The majorant given in Fact~\ref{th:basichifbe}~\ref{th:basichifbe:ineqforp} is based on the majorant \eqref{eq:holdmajf} for the function $f$.
Note that this type of majorant is generally not encompassed by the standard descent lemma~\eqref{eq:lipdesc}. 
For instance, consider the function $f: \R \to \R$ defined by $f(x) = \frac{1}{q} |x|^q$ with $q = \frac{3}{2}$, which exhibits a $\frac{1}{2}$-H\"{o}lder continuous gradient. 
By setting $x = 0$, \eqref{eq:lipdesc} simplifies to $|y|^q \leq \frac{q L}{2} |y|^2$, an inequality that does not hold for all $y \in \R$. 
Moreover, even if \eqref{eq:holdmajf} holds for some $\nu \in (0, 1]$, it may fail for other exponents, i.e., one may not have $f(y)\leq  \mathcal{M}_\mu(x,y)$ for all $y\in \R^n$ whenever $\mu\in(0, 1]$ and $\mu\neq\nu$,
where
$\mathcal{M}_\nu(x,y):= f(x) + \langle \nabla f(x), y - x \rangle+\frac{L_{\nu}}{1 + \nu}\Vert x - y \Vert ^{1+\nu}$.
Figures~\ref{fig:maj521}~(a)~and~(b) illustrate this behavior for $\gf(x)=\frac{2}{3}|x|^{\frac{3}{2}} - 0.5\cos(3x) + 0.2|x|$ evaluated at $\ov{x}=0.5$. 
Figure~\ref{fig:maj521}~(a) shows that $\mathcal{M}_{0.5}(x,y)$ successfully majorizes $\gf(x)$, whereas $\mathcal{M}_{1}(x,y)$ fails, dipping below $\gf(x)$. 
Figure~\ref{fig:maj521}~(b) confirms that $\mathcal{M}_{0.2}(x,y)$ also ceases to be a valid majorant.

\begin{figure}
        \subfloat[$\mathcal{M}_{0.5}(x,y)$ and $\mathcal{M}_{1}(x,y)$]
        {\includegraphics[width=0.4\textwidth]{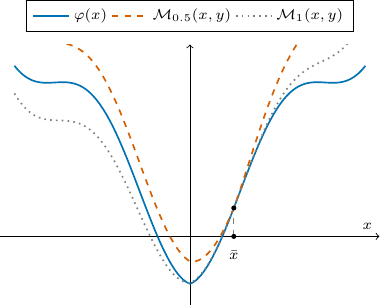}}\qquad\qquad\qquad\qquad
        \subfloat[$\mathcal{M}_{0.5}(x,y)$ and $\mathcal{M}_{0.2}(x,y)$]
        {\includegraphics[width=0.4\textwidth]{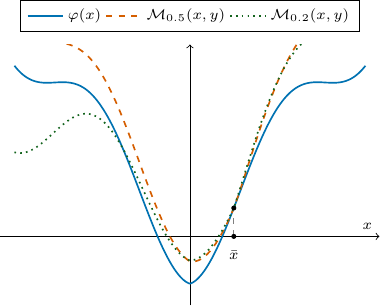}}
        \vspace{2mm}
        \caption{
        Graphs of $\gf$, $\mathcal{M}_{0.2}(x,y)$,  $\mathcal{M}_{0.5}(x,y)$, and $\mathcal{M}_{1}(x,y)$. \label{fig:maj521}}
    \end{figure}

The next example illustrates how the choice of $\gamma$ affects the shape of the HiFBE and validates Fact~\ref{th:basichifbe}~\ref{th:basichifbe:infim2forp}.
\begin{example}\label{ex:diffgam}
Consider $f,g: \R\to\R$ defined by $f(x) = 0.5 \vert x\vert^{\frac{3}{2}}$ and $g(x) = \vert 0.3 \sin(5x)\vert + 0.2 x^2 e^{-x^2}$
and let $\gf(x) = f(x) + g(x)$. Here, $f$ exhibits a  $\frac{1}{2}$-H\"{o}lder continuous gradient, while $g$ is nonconvex and oscillatory, introducing multiple local minima into $\gf$. We set $p =1 + \nu= 1.5$, consistent with the H\"{o}lder exponent $\nu = \frac{1}{2}$.
HiFBE, $\fgam{\gf}{p}{\gamma}$,  is computed numerically on a grid over $x \in [-2.5, 2.5]$. 
Figure~\ref{fig:ex:diffgam} displays $\gf$ and $\fgam{\gf}{p}{\gamma}$ for three values of $\gamma$.

Choosing a large $\gamma$, here $\gamma = 2$, may violate the condition $\gamma\in (0, L_\nu^{-1})$, breaking the equivalence in Fact~\ref{th:basichifbe}~\ref{th:basichifbe:infim2forp}. 
As shown in Figure~\ref{fig:ex:diffgam}~(a), a large $\gamma$ weakens the penalty term $\frac{1}{p\gamma}\Vert x - y\Vert^p$, allowing the minimizer $y\in \Tprox{\gf}{\gamma}{p}(x)$ to deviate substantially from $x$, and thus $\fgam{\gf}{p}{\gamma}$ diverges from $\gf$. 
Conversely, Figure~\ref{fig:ex:diffgam}~(b) illustrates that a small $\gamma = 0.2$ strengthens the penalty excessively, making $\fgam{\gf}{p}{\gamma}\approx \gf$. 
While this preserves fidelity to $\gf$, it also inherits its nonsmooth and multimodal behavior, which may hinder optimization. 
A moderate value, such as $\gamma = 1$ (Figure~\ref{fig:ex:diffgam}~(c)), provides a balance: it preserves the equivalence in Fact~\ref{th:basichifbe}~\ref{th:basichifbe:infim2forp}, produces a smoother envelope, and mitigates spurious local minima. 
This example highlights the critical role of $\gamma$ in the shaping of HiFBE and its influence on the balance between fidelity and smoothness in nonconvex optimization.
\end{example}

\begin{figure}
        \subfloat[ Graphs of $\gf$ and $\fgam{\gf}{p}{\gamma}$: $\gamma = 2$ ]
        {\includegraphics[width=5.5cm]{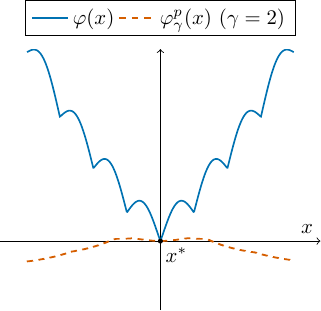}}\quad
           \subfloat[ Graphs of $\gf$ and $\fgam{\gf}{p}{\gamma}$: $\gamma = 0.2$ ]
        {\includegraphics[width=5.5cm]{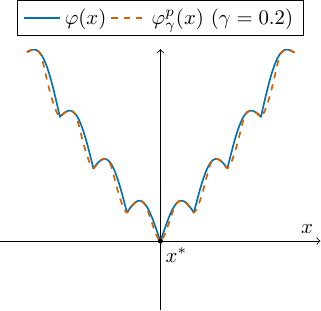}}\quad
       \subfloat[ Graphs of $\gf$ and $\fgam{\gf}{p}{\gamma}$: $\gamma = 1$ ]
       {\includegraphics[width=5.5cm]{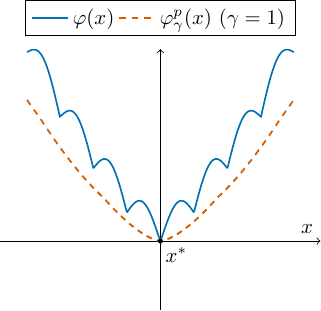}}
        \caption{
        Graphs of $\gf$ and $\fgam{\gf}{p}{\gamma}$ for different values of $\gamma$ in Example~\ref{ex:diffgam}.
         \label{fig:ex:diffgam}}
    \end{figure}

The following fact summarizes key existence and stability properties of the HiFBS.
\begin{fact}[Fundamental properties of HiFBS]\label{th:hopbfb}\cite[Theorem~27]{kabgani2025fb}
Let $p>1$, Assumption~\ref{assum:mainassum:g} hold, and let 
$f:\R^n\to \R$ be Fr\'{e}chet differentiable. For the composite function $\gf(x):=f(x)+g(x)$, the following statements hold:
 \begin{enumerate}[label=(\textbf{\alph*}), font=\normalfont\bfseries, leftmargin=0.7cm]
\item \label{th:hopbfb:proxb:proxnonemp} For each $x\in \R^n$ and $\gamma\in (0, \gamma^{g, p})$, the set
 $\Tprox{\gf}{\gamma}{p}(x)$ is nonempty and compact;

\item \label{th:hopbfb:proxb:conv} If $y^k\in \Tprox{\gf}{\gamma^k}{p}(x^k)$, $x^k\to \ov{x}$, and $\gamma^k\to \gamma\in (0, \gamma^{g, p})$, then the sequence $\{y^k\}_{k\in \mathbb{N}}$ is bounded and all of its cluster points lie in $\Tprox{\gf}{\gamma}{p}(\ov{x})$.
    \end{enumerate} 
\end{fact}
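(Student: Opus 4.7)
The plan is to reduce both assertions to classical parametric minimization results for extended real-valued functions, using Fact~\ref{th:basichifbe} as the main toolkit. The quantities defining HiFBS and HiFBE are
\[
\Tprox{\gf}{\gamma}{p}(x) = \argmint{y \in \R^n} \Psi(y, x, \gamma), \qquad \fgam{\gf}{p}{\gamma}(x) = \mathop{\bs{\inf}}\limits_{y \in \R^n} \Psi(y, x, \gamma),
\]
with $\Psi(y, x, \gamma) := \ell(x, y) + \frac{1}{p\gamma}\Vert x - y\Vert^p$. Since Fact~\ref{th:basichifbe}~\ref{th:basichifbe:levelunif} already furnishes the lower semicontinuity of $\Psi$ and, crucially, level-boundedness in $y$ locally uniformly in $(x, \gamma) \in \R^n \times (0, \gamma^{g,p})$, the statements become direct consequences of general attainment and stability results in variational analysis (see, e.g., \cite{Rockafellar09}).

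For Assertion~\ref{th:hopbfb:proxb:proxnonemp}, I would fix $x \in \R^n$ and $\gamma \in (0, \gamma^{g,p})$. Fact~\ref{th:basichifbe}~\ref{th:basichifbe:finite} ensures $\fgam{\gf}{p}{\gamma}(x)$ is finite, and specializing the locally uniform level-boundedness of Fact~\ref{th:basichifbe}~\ref{th:basichifbe:levelunif} to $(x, \gamma)$ renders $y \mapsto \Psi(y, x, \gamma)$ a proper, lsc, level-bounded function. The classical attainment principle that any such function reaches its infimum on a nonempty compact set then yields that $\Tprox{\gf}{\gamma}{p}(x)$ is nonempty and compact.

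For Assertion~\ref{th:hopbfb:proxb:conv}, I would consider $y^k \in \Tprox{\gf}{\gamma^k}{p}(x^k)$ with $x^k \to \ov{x}$ and $\gamma^k \to \gamma \in (0, \gamma^{g,p})$. First, $\Psi(y^k, x^k, \gamma^k) = \fgam{\gf}{p}{\gamma^k}(x^k)$, and by the continuity of $\fgam{\gf}{p}{\gamma}$ in $(x, \gamma)$ (Fact~\ref{th:basichifbe}~\ref{th:basichifbe:con}) this sequence converges to $\fgam{\gf}{p}{\gamma}(\ov{x})$, hence is bounded. Coupled with the locally uniform level-boundedness of $\Psi$ in a neighborhood of $(\ov{x}, \gamma)$, this confines $\{y^k\}$ to a bounded set. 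To identify any cluster point $y^*$ (with $y^{k_j} \to y^*$) as a minimizer, I would combine the joint lower semicontinuity of $\Psi$, which yields
\[
\Psi(y^*, \ov{x}, \gamma) \leq \mathop{\bs\liminf}\limits_{j \to \infty} \Psi(y^{k_j}, x^{k_j}, \gamma^{k_j}) = \fgam{\gf}{p}{\gamma}(\ov{x}),
\]
with the pointwise continuity of $\Psi(y, \cdot, \cdot)$—a consequence of the continuity of $\nabla f$ and of the smoothness of $\frac{1}{p\gamma}\Vert\cdot\Vert^p$ away from $\gamma = 0$—to obtain, for any fixed $y \in \R^n$,
\[
\Psi(y, \ov{x}, \gamma) = \mathop{\bs\lim}\limits_{j \to \infty} \Psi(y, x^{k_j}, \gamma^{k_j}) \geq \mathop{\bs\lim}\limits_{j \to \infty} \fgam{\gf}{p}{\gamma^{k_j}}(x^{k_j}) = \fgam{\gf}{p}{\gamma}(\ov{x}).
\]
These two inequalities together confirm $y^* \in \Tprox{\gf}{\gamma}{p}(\ov{x})$.

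The principal technical obstacle—establishing locally uniform level-boundedness of $\Psi$ despite the linear forward term $\langle \nabla f(x), y - x\rangle$ and the non-quadratic $p$-th power penalty—has already been absorbed into Fact~\ref{th:basichifbe}~\ref{th:basichifbe:levelunif}. Consequently, the present proof reduces to careful bookkeeping with standard epi-convergence and parametric minimization arguments, and no new analytic ingredient beyond continuity of $\nabla f$ is required.
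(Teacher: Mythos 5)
Your argument is correct and is essentially the expected one: the paper states this result as a Fact imported from \cite[Theorem~27]{kabgani2025fb} without reproving it, and the natural proof there is precisely the parametric-minimization argument you give, i.e., applying the standard attainment and stability theorems for proper, lsc, level-bounded functions (cf.\ Theorems~1.9 and~1.17 in \cite{Rockafellar09}) to $\Psi(y,x,\gamma)$, whose joint lower semicontinuity and locally uniform level-boundedness are exactly what Fact~\ref{th:basichifbe}~\ref{th:basichifbe:finite}--\ref{th:basichifbe:con} supply. The only cosmetic remark is that your final display in part~\ref{th:hopbfb:proxb:conv} is redundant, since $\Psi(y^*,\ov{x},\gamma)\le\fgam{\gf}{p}{\gamma}(\ov{x})=\inf_y\Psi(y,\ov{x},\gamma)$ already forces $y^*$ to be a minimizer.
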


The next lemma provides a lower-boundedness property that will be instrumental in proving the boundedness of HiFBS and the H\"{o}lder continuity of HiFBE in the following subsections.
\begin{lemma}[Lower-boundedness]\label{lem:lower}
Let $p>1$, Assumption~\ref{assum:mainassum:g} hold and let 
$f:\R^n\to \R$ be Fr\'{e}chet differentiable.
Then, for each $r>0$ and $\gamma\in (0, \gamma^{g, p})$, the function $(x,y)\mapsto \ell(x,y)+\frac{2^{p-1}}{p\gamma}\Vert y\Vert^p$
is bounded from below on $\mb(0; r)\times\R^n$. 
\end{lemma}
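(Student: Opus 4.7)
The plan is to split the expression into a HiFBE-type piece plus a correction, and then to handle the two pieces using, respectively, the continuity/finiteness of $\fgam{\gf}{p}{\gamma}$ and the elementary inequality of Lemma~\ref{lem:findlowbounknu:lemma}~\ref{lem:basicineq:b}. Concretely, I would write
\[
\ell(x,y)+\frac{2^{p-1}}{p\gamma}\Vert y\Vert^p
\;=\;
\left[\,\ell(x,y)+\frac{1}{p\gamma}\Vert y-x\Vert^p\,\right]
\;+\;\frac{1}{p\gamma}\Bigl(2^{p-1}\Vert y\Vert^p-\Vert y-x\Vert^p\Bigr),
\]
so that the first bracket is, by the definition of HiFBE, bounded below by $\fgam{\gf}{p}{\gamma}(x)$.

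For the first bracket I invoke Assumption~\ref{assum:mainassum:g} together with Fact~\ref{th:basichifbe}~\ref{th:basichifbe:finite} and \ref{th:basichifbe:con}: since $\gamma\in(0,\gamma^{g,p})$, the HiFBE $\fgam{\gf}{p}{\gamma}$ is finite-valued and continuous on $\R^n$, and hence attains a finite minimum on the compact set $\closure{\mb(0;r)}$. For the correction bracket, I apply Lemma~\ref{lem:findlowbounknu:lemma}~\ref{lem:basicineq:b} with $a=y$ and $b=x$, giving $\Vert y-x\Vert^p\leq 2^{p-1}(\Vert y\Vert^p+\Vert x\Vert^p)$, and therefore
\[
2^{p-1}\Vert y\Vert^p-\Vert y-x\Vert^p\;\geq\;-2^{p-1}\Vert x\Vert^p\;\geq\;-2^{p-1}r^p
\]
whenever $x\in\mb(0;r)$. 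Combining the two estimates yields an explicit lower bound independent of $y$, namely
\[
\ell(x,y)+\frac{2^{p-1}}{p\gamma}\Vert y\Vert^p\;\geq\;\mathop{\bs{\min}}_{x\in\closure{\mb(0;r)}}\fgam{\gf}{p}{\gamma}(x)\;-\;\frac{2^{p-1}r^p}{p\gamma},
\]
which finishes the argument.

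There is no serious obstacle; the only point requiring care is the choice of the splitting, where the prefactor $2^{p-1}$ in the statement is tailored precisely so that Lemma~\ref{lem:findlowbounknu:lemma}~\ref{lem:basicineq:b} absorbs the discrepancy between the natural HiFBE penalty $\Vert y-x\Vert^p$ and the simpler $\Vert y\Vert^p$ appearing in the claim. A minor verification is that Fact~\ref{th:basichifbe}~\ref{th:basichifbe:finite}--\ref{th:basichifbe:con} does not require Assumption~\ref{assum:mainassum:f}, only the prox-boundedness of $g$ and Fréchet differentiability of $f$, which are exactly the hypotheses of this lemma.
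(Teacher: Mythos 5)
Your proof is correct and follows essentially the same route as the paper's: both arguments bound $\ell(x,y)+\frac{1}{p\gamma}\Vert x-y\Vert^p$ below by the finite value $\fgam{\gf}{p}{\gamma}(x)$ and then use the inequality $\Vert x-y\Vert^p\le 2^{p-1}(\Vert x\Vert^p+\Vert y\Vert^p)$ to trade $\Vert x-y\Vert^p$ for $\Vert y\Vert^p$ at the cost of a $-\tfrac{2^{p-1}}{p\gamma}r^p$ term. If anything, your explicit appeal to continuity of $\fgam{\gf}{p}{\gamma}$ on $\closure{\mb(0;r)}$ to make the lower bound uniform in $x$ is slightly more careful than the paper's "since $x$ was arbitrary" closing step.
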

\begin{proof}
Fix $r>0$ and $\gamma\in (0, \gamma^{g, p})$. By Fact~\ref{th:basichifbe}~\ref{th:basichifbe:finite}~and~\ref{th:basichifbe:con}, for $\gf(x):=f(x)+g(x)$,  the mapping
$x\mapsto \fgam{\gf}{p}{\gamma}(x)$ is finite and continuous on $\R^n$.   Hence, on the compact set $\closure{\mb(0;r)}$, it is bounded from below. Let
\[
\beta_r:=\mathop{\bs{\inf}}\limits_{u\in\closure{\mb(0;r)}} \fgam{\gf}{p}{\gamma}(u)>-\infty.
\]
For any $x\in \mb(0; r)$ and $y\in\R^n$, we obtain
\begin{align*}
\beta_r\leq \fgam{\gf}{p}{\gamma}(x)&\overset{(i)}{\leq} f(x)+\langle \nabla f(x) , y - x\rangle+g(y)+\frac{1}{p\gamma}\Vert x- y\Vert^p
\\&\overset{(ii)}{\leq} f(x)+\langle \nabla f(x) , y - x\rangle+g(y)+\frac{2^{p-1}}{p\gamma}\left(\Vert x\Vert^p+\Vert y\Vert^p\right)
\\&\leq f(x)+\langle \nabla f(x) , y - x\rangle+g(y)+\frac{2^{p-1}}{p\gamma}\left(r^p+\Vert y\Vert^p\right),
\end{align*}
where $(i)$ follows from the definition of $\fgam{\gf}{p}{\gamma}(x)$ and $(ii)$ from the inequality \eqref{eq:baseq:pgen}, i.e., 
\[
\beta_r - \frac{2^{p-1}}{p\gamma}r^p\leq 
f(x)+\langle \nabla f(x) , y - x\rangle+g(y)+\frac{2^{p-1}}{p\gamma}\Vert y\Vert^p, \qquad \forall (x,y)\in \mb(0;r)\times \R^n.
\]
This proves the claim.
\end{proof}

\subsection{{\bf H\"{o}lder property of HiFBE}}
\label{sub:Holpro}
Before establishing the H\"{o}lder continuity of HiFBE, we first show that the HiFBS mapping is bounded on bounded sets. This ensures that minimizers of the forward–backward subproblem remain confined to a compact region, which is essential for subsequent regularity analysis.
We show that for each $r>0$ and an appropriately chosen $\gamma$, 
there exists $\tau>0$ such that $\Tprox{\gf}{\gamma}{p}\left(\mb(0; r)\right)\subseteq\mb(0; \tau)$.
\begin{proposition}[Boundedness of HiFBS]\label{prop:findtau:lip} 
Let Assumptions~\ref{assum:mainassum:f}~and~\ref{assum:mainassum:g} hold, let $p=1+\nu$, and define $\gf:\R^n\to\Rinf$ by $\gf(x):=f(x)+g(x)$. 
Then there exists $\ov{\gamma}\in (0, \gamma^{g,p})$ such that for each $\gamma\in (0,\ov{\gamma})$ and any $r>0$, there exists a finite $\tau>0$ satisfying
\begin{equation*}\label{findtau:main}
\Tprox{\gf}{\gamma}{p}(x)=\argmint{\{y\in \R^n \mid \Vert y\Vert \leq\tau\}}\left\{\ell(x,y)+\frac{1}{p\gamma}\Vert x- y\Vert^p\right\},\qquad \forall\,x\in \mb(0;r).
\end{equation*}
\end{proposition}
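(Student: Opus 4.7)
The plan is to show that, below a threshold $\ov{\gamma}$, every minimizer of the HiFBS subproblem starting from $x\in \mb(0;r)$ lies in a ball whose radius $\tau$ depends only on $r$ and $\gamma$. Writing $\Phi_\gamma(x,y):=\ell(x,y)+\tfrac{1}{p\gamma}\Vert x-y\Vert^p$, the strategy is to bound $\Phi_\gamma(x,\cdot)$ from above by its value at a fixed anchor point, bound $\Phi_\gamma(x,\cdot)$ from below via Lemma~\ref{lem:lower}, and exploit a gap between the two for large $\Vert y\Vert$ in order to confine all minimizers to a bounded region.

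Set $\ov{\gamma}:=\gamma^{g,p}/2^{p-1}\in(0,\gamma^{g,p})$ (well defined since $p>1$) and fix $\gamma\in(0,\ov{\gamma})$ and $r>0$. Since $g$ is proper, pick any $y_0\in \dom{g}$. By Assumption~\ref{assum:mainassum:f}, $f$ and $\nabla f$ are continuous on $\R^n$ and therefore bounded on $\mb(0;r)$; a direct estimate of $\Phi_\gamma(x,y_0)$ then produces a finite constant $C_1=C_1(r,y_0,\gamma)$ with
\[
\Phi_\gamma(x,y_0)\leq C_1,\qquad \forall\, x\in \mb(0;r).
\]
By Fact~\ref{th:hopbfb}~\ref{th:hopbfb:proxb:proxnonemp}, $\Tprox{\gf}{\gamma}{p}(x)$ is nonempty for every $x\in\mb(0;r)$, and any $y^\ast\in \Tprox{\gf}{\gamma}{p}(x)$ satisfies $\Phi_\gamma(x,y^\ast)\leq \Phi_\gamma(x,y_0)\leq C_1$.

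For the matching lower bound I would pick $\widetilde{\gamma}\in(2^{p-1}\gamma,\gamma^{g,p})$, which is possible by the definition of $\ov{\gamma}$, and invoke Lemma~\ref{lem:lower} with parameter $\widetilde{\gamma}$ to obtain some $\beta\in\R$ with $\ell(x,y)+\tfrac{2^{p-1}}{p\widetilde{\gamma}}\Vert y\Vert^p\geq \beta$ for all $(x,y)\in \mb(0;r)\times\R^n$. Combining this with the reverse triangle inequality $\Vert x-y\Vert\geq \Vert y\Vert-r$ valid for $\Vert y\Vert>r$ yields
\[
\Phi_\gamma(x,y)\geq \beta-\frac{2^{p-1}}{p\widetilde{\gamma}}\Vert y\Vert^p+\frac{(\Vert y\Vert-r)^p}{p\gamma},\qquad x\in \mb(0;r),\ \Vert y\Vert>r.
\]
Since $1/\gamma>2^{p-1}/\widetilde{\gamma}$ by construction, the coefficient of the leading $\Vert y\Vert^p$ term on the right is strictly positive, so $\Phi_\gamma(x,y)\to+\infty$ as $\Vert y\Vert\to\infty$, uniformly in $x\in \mb(0;r)$. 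Choosing $\tau>r$ large enough that $\Phi_\gamma(x,y)>C_1$ whenever $\Vert y\Vert\geq\tau$ and $x\in \mb(0;r)$, every $y^\ast\in \Tprox{\gf}{\gamma}{p}(x)$ must satisfy $\Vert y^\ast\Vert<\tau$, and consequently the unrestricted and the restricted $\arg\min$ sets coincide.

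The only delicate step is calibrating $\gamma$ against $\widetilde{\gamma}$ so that the penalty $\tfrac{1}{p\gamma}\Vert x-y\Vert^p$ strictly dominates the Lemma~\ref{lem:lower} slack $\tfrac{2^{p-1}}{p\widetilde{\gamma}}\Vert y\Vert^p$ for large $\Vert y\Vert$. The threshold $\ov{\gamma}=\gamma^{g,p}/2^{p-1}$ is engineered precisely to leave a nonempty window $(2^{p-1}\gamma,\gamma^{g,p})$ from which $\widetilde{\gamma}$ may be drawn, and this coercivity mechanism is the main quantitative content of the proposition.
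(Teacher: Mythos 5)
Your proof is correct, and it reaches the conclusion by a noticeably different route than the paper. Both arguments hinge on the same coercivity mechanism---invoking Lemma~\ref{lem:lower} with a larger parameter $\widetilde{\gamma}$ (resp.\ $\widehat{\gamma}$) and observing that $\tfrac{1}{p\gamma}\Vert x-y\Vert^p$ dominates the slack $\tfrac{2^{p-1}}{p\widetilde{\gamma}}\Vert y\Vert^p$ once $\gamma$ is small enough---but they diverge in the upper bound and in how the penalty is compared to $\Vert y\Vert^p$. The paper bounds $\fgam{\gf}{p}{\gamma}(x)$ from above by evaluating the subproblem at $\ov{x}=0$ and then invoking the H\"olderian descent lemma (Fact~\ref{fact:holder:declem}) and Lemma~\ref{lem:hol:oneside}, and it converts $\Vert x-y\Vert^p$ into $\Vert y\Vert^p$ via the inequality $\Vert y\Vert^p\le 2^{p-1}(\Vert x\Vert^p+\Vert y-x\Vert^p)$; this yields the fully explicit radius $\tau$ in \eqref{eq:prop:findtau:lip}, which is then reused in Remark~\ref{rem:ontau} to exhibit a $\tau$ independent of $\gamma$. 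You instead use a fixed anchor $y_0\in\dom{g}$ for the upper bound and the reverse triangle inequality $(\Vert y\Vert-r)^p$ for the lower one. Your version is more elementary and slightly more general---it never uses the H\"older continuity of $\nabla f$ or the coupling $p=1+\nu$, only continuity of $f$ and $\nabla f$ together with Assumption~\ref{assum:mainassum:g}, and it also sidesteps the paper's implicit reliance on $0\in\dom{g}$---and the sharper triangle-inequality estimate even buys a larger admissible threshold ($\gamma^{g,p}/2^{p-1}$ versus the paper's effective $4^{1-p}\gamma^{g,p}$). What you give up is constructiveness: your $\tau$ is defined only through a ``large enough'' coercivity argument, so it cannot be plugged into Remark~\ref{rem:ontau} as written. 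Two cosmetic points: guard against $\gamma^{g,p}=+\infty$ by taking, e.g., $\ov{\gamma}:=\bs\min\{1,\gamma^{g,p}/2^{p-1}\}$, and note that your lower bound as displayed is only stated for $\Vert y\Vert>r$, which is all you need since the desired $\tau$ exceeds $r$ anyway.
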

\begin{proof}
Let $\widehat{\gamma}\in(0,\gamma^{g,p})$ be arbitrary and choose $\gamma\in(0,4^{1-p}\widehat{\gamma})$.  
From Lemma~\ref{lem:lower}, together with $c_0:=\tfrac{2^{p-1}}{p\widehat{\gamma}}$, there exists $c_1\in\R$ such that the mapping $(x,y)\mapsto\ell(x,y)+c_0\Vert y\Vert^p$ is bounded from below on $\mb(0;r)\times\R^n$ by $c_1$.  
For any $x\in \mb(0;r)$ and any $y\in\Tprox{\gf}{\gamma}{p}(x)$, setting $\ov{x}=0$, we have
 \begin{align*}\label{findtau:eq2}
-c_0\Vert y\Vert^p +c_1+\frac{1}{p\gamma}\Vert x- y\Vert^p&\leq  \ell(x,y)+\frac{1}{p\gamma}\Vert x- y\Vert^p=\fgam{\gf}{p}{\gamma}(x)
\\&\leq f(x)+\langle \nabla f(x), \ov{x}- x\rangle+g(\ov{x})+\frac{1}{p\gamma}\Vert x- \ov{x}\Vert^p
\\&\overset{(i)}{\leq} f(\ov{x})+\langle \nabla f(\ov{x}), x- \ov{x}\rangle+\frac{L_\nu}{p}\Vert x- \ov{x}\Vert^{p}+\langle \nabla f(x), \ov{x}- x\rangle+g(\ov{x})+\frac{1}{p\gamma}\Vert x- \ov{x}\Vert^p
\\
&= f(\ov{x})+\langle \nabla f(\ov{x})-\nabla f(x), x- \ov{x}\rangle+g(\ov{x})+\frac{L_\nu}{p}\Vert x- \ov{x}\Vert^{p}+\frac{1}{p\gamma}\Vert x- \ov{x}\Vert^p,
 \end{align*}
 where $(i)$ follows from the H\"{o}lderian descent lemma (Fact~\ref{fact:holder:declem}).  
By Lemma~\ref{lem:hol:oneside},
 \[
 -c_0\Vert y\Vert^p +c_1+\frac{1}{p\gamma}\Vert x- y\Vert^p\leq \gf(\ov{x})+  \frac{2L_\nu}{p}\Vert  x- \ov{x}\Vert^p+\frac{L_\nu}{p}\Vert x- \ov{x}\Vert^{p}+\frac{1}{p\gamma}\Vert x- \ov{x}\Vert^p.
 \]
Using \eqref{eq:baseq:pgen},
we obtain
\[
 -c_0\Vert y\Vert^p+\frac{1}{p\gamma}\left(2^{1-p}\Vert y\Vert^p-\Vert x\Vert^p\right)\leq\frac{3\gamma  L_\nu+1}{p\gamma}\Vert x\Vert^p+(\gf(\ov{x})-c_1).
\]
Hence, 
\[
(2^{1-p}-c_0 p\gamma)\Vert y\Vert^p\leq (3\gamma  L_\nu+2)\Vert x\Vert^p+p\gamma(\gf(\ov{x})-c_1).
\]
Since $\gamma<4^{1-p}\widehat{\gamma}=\tfrac{2^{1-p}}{c_0p}$, we conclude that
\begin{equation}\label{eq:prop:findtau:lip}
\Vert y\Vert\leq\left(\frac{(3\gamma  L_\nu+2)r^p+p\gamma(\gf(\ov{x})-c_1)}{2^{1-p}-c_0 p\gamma}\right)^{\frac{1}{p}}=:\tau.
\end{equation}
Setting $\ov{\gamma}=4^{1-p}\widehat{\gamma}$, our desired result holds.
 \end{proof}
The following remark regarding Proposition~\ref{prop:findtau:lip} clarifies the dependence of the value of $\tau$ on $\gamma$ and $r$, and explains how it can be made independent of $\gamma$.
\begin{remark}\label{rem:ontau}
Proposition~\ref{prop:findtau:lip} shows that for any fixed $\widehat{\gamma}\in(0,\gamma^{g,p})$ and $\gamma\in(0,4^{1-p}\widehat{\gamma})$, there exists $\tau=\tau(\gamma,r)$ such that $\Tprox{\gf}{\gamma}{p}(x)\subseteq\mb(0;\tau)$ for all $x\in\mb(0;r)$, where $\tau$ is given by~\eqref{eq:prop:findtau:lip}.  
In this formula, $\tau$ depends on both $\gamma$ and $r$.  
Since 
\[
\Gamma(\gamma):=2^{1-p}-c_0p\gamma
=2^{1-p}-\frac{2^{p-1}\gamma}{\widehat{\gamma}},
\]
we have $\Gamma(\gamma)\to 0$ as $\gamma\uparrow4^{1-p}\widehat{\gamma}$, implying $\tau(\gamma,r)\to+\infty$.  
To control the size of $\tau$, one may impose an upper bound $\gamma_{\bs\max}<4^{1-p}\widehat{\gamma}$ and choose $\gamma\le\gamma_{\bs\max}$.  
In this case,
    \[
    \tau\leq \left(\frac{(3\gamma_{\bs\max}  L_\nu+2)r^p+p\gamma_{\bs\max}(\gf(\ov{x})-c_1)}{2^{1-p}-c_0 p\gamma_{\bs\max}}\right)^{\frac{1}{p}}:=\widehat{\tau},
    \]
so that $\Tprox{\gf}{\gamma}{p}(x)\subseteq\mb(0;\widehat{\tau})$ for all $x\in\mb(0;r)$, and $\widehat{\tau}$ depends only on $r$.
\end{remark}

In the following theorem, we prove the H\"{o}lder property of HiFBE.
\begin{theorem}[H\"older property of HiFBE]\label{th:hol}
Let Assumptions~\ref{assum:mainassum:f}~and~\ref{assum:mainassum:g} hold, let $p=1+\nu$, and define $\gf:\R^n\to\Rinf$ by $\gf(x):=f(x)+g(x)$.
Then, for each $r>0$, there exists $\ov{\gamma}_r\in (0, \gamma^{g, p})$ 
such that, for every $\gamma\in(0,\ov\gamma_r)$, there exists a constant $\mathcal L_{\nu,r,\gamma}>0$ satisfying
\[
\vert \fgam{\gf}{p}{\gamma}(z)-\fgam{\gf}{p}{\gamma}(x)\vert\leq \mathcal L_{\nu,r,\gamma} \Vert z-x\Vert^\nu, \qquad \forall x, z\in \mb(0; r).
\]
In particular, $\fgam{\gf}{p}{\gamma}$ is H\"older continuous of order $\nu$ on $\mb(0;r)$ for every $\gamma\in(0,\ov\gamma_r)$.
\end{theorem}
\begin{proof}
Let $\ov{x}=0$ and $r>0$ be fixed. By Proposition~\ref{prop:findtau:lip}, there exists $\ov{\gamma}_0\in (0, \gamma^{g, p})$ such that
for each $\gamma\in \left(0, \ov{\gamma}_0\right)$ there is $\tau>0$ with
 $\Tprox{\gf}{\gamma}{p}(x)\subseteq \mb(\ov{x}; \tau)$  for all $x\in \mb(\ov{x}; r)$.
  By setting $\ov{\gamma}_r:=\min\{\ov{\gamma}_0, \frac{1}{L_\nu}\}$ and assuming $\gamma\in \left(0, \ov{\gamma}_r\right)$, we have
$\frac{L_\nu}{p}\leq \frac{1}{p\gamma}$ and for all $x, z\in \mb(\ov{x}; r)$ and $y\in \Tprox{\gf}{\gamma}{p}(x)$, we get
 \begin{align}\label{findtau:2:eq4:l1}
\fgam{\gf}{p}{\gamma}(z)&\leq f(z)+\langle \nabla f(z), y-z\rangle+g(y)+\frac{1}{p\gamma}\Vert z - y\Vert^p\nonumber\\
&\leq f(x)+\langle \nabla f(x), z - x\rangle+\frac{L_\nu}{p}\Vert z - x\Vert^p+\langle \nabla f(z), y-z\rangle+g(y)+\frac{1}{p\gamma}\Vert z - y\Vert^p
\nonumber\\
&\leq f(x)+\langle \nabla f(x), z - x\rangle+\frac{1}{p\gamma}\Vert z - x\Vert^p+\langle \nabla f(z), y-z\rangle+g(y)+\frac{1}{p\gamma}\Vert z - y\Vert^p.
 \end{align}
 Since
\[
\fgam{\gf}{p}{\gamma}(x)= f(x)+\langle \nabla f(x), y - x\rangle+g(y)+\frac{1}{p\gamma}\Vert x-y\Vert^p,
\]
then \eqref{findtau:2:eq4:l1} implies
 \begin{align}\label{findtau:2:eq4:l2}
\fgam{\gf}{p}{\gamma}(z)&
\leq \fgam{\gf}{p}{\gamma}(x)-\langle \nabla f(x), y - x\rangle-\frac{1}{p\gamma}\Vert x-y\Vert^p+\langle \nabla f(x), z - x\rangle+\frac{1}{p\gamma}\Vert z - x\Vert^p\nonumber\\&~~~+\langle \nabla f(z), y-z\rangle+\frac{1}{p\gamma}\Vert z - y\Vert^p
\nonumber\\
&=\fgam{\gf}{p}{\gamma}(x)+\langle \nabla f(x), z-y\rangle+\frac{1}{p\gamma}\left[\Vert z - x\Vert^p+\Vert z - y\Vert^p-\Vert x-y\Vert^p\right]
+\langle \nabla f(z), y-z\rangle\nonumber\\
&=\fgam{\gf}{p}{\gamma}(x)+\langle \nabla f(x)-\nabla f(z), z-y\rangle+\frac{1}{p\gamma}\left[\Vert z - x\Vert^p+\Vert z - y\Vert^p-\Vert x-y\Vert^p\right].
 \end{align}
Since $p>1$, the function $t\mapsto \frac{1}{p}t^p$ is convex on $[0, +\infty)$. Thus, the gradient inequality implies that for $t_1, t_2\in [0, +\infty)$,
$\frac{1}{p}t_1^p - \frac{1}{p}t_2^p\leq \langle t_1^{p-1}, t_1 - t_2\rangle$. Setting $t_1:=\Vert z - x\Vert +\Vert x -y\Vert$ and $t_2:=\Vert x- y\Vert$, we obtain
\[
 \frac{1}{p}\left(\Vert z - x\Vert+\Vert x - y\Vert\right)^p -\frac{1}{p}\Vert x - y\Vert^p\leq \left(\Vert z - x\Vert+\Vert x - y\Vert\right)^{p-1}\Vert z - x\Vert.
 \]
This consequently implies
  \begin{align}\label{findtau:2:eq4:l3}
\frac{1}{p\gamma}\left(\Vert z - y\Vert^p-\Vert x-y\Vert^p\right)&\leq \frac{1}{p\gamma} \left(\Vert z- x\Vert+\Vert x- y\Vert\right)^p-\frac{1}{p\gamma}\Vert x-y\Vert^p
\nonumber\\&\leq \frac{1}{\gamma}\left(\Vert z - x\Vert+\Vert x - y\Vert\right)^{p-1}\Vert z - x\Vert.
  \end{align}
Moreover, from $f\in \mathcal{C}^{1, \nu}_{L_\nu}(\mb(0; r))$ and the Cauchy–Schwarz inequality, we get
  \[
  \langle \nabla f(x)-\nabla f(z), z-y\rangle\leq \Vert \nabla f(x)-\nabla f(z)\Vert \Vert z-y\Vert \leq L_\nu\Vert x-z\Vert^{\nu}\Vert z-y\Vert.
  \]
  Together with \eqref{findtau:2:eq4:l2} and \eqref{findtau:2:eq4:l3}, this implies
  \begin{align}\label{findtau:2:eq4:l4}
\fgam{\gf}{p}{\gamma}(z)&\leq\fgam{\gf}{p}{\gamma}(x)+\langle \nabla f(x)-\nabla f(z), z-y\rangle+\frac{1}{p\gamma}\left[\Vert z - x\Vert^p+\Vert z - y\Vert^p-\Vert x-y\Vert^p\right]\nonumber
\\&\leq \fgam{\gf}{p}{\gamma}(x)+L_\nu\Vert z-x\Vert^{\nu} \Vert z-y\Vert+\frac{1}{p\gamma}\Vert z - x\Vert^p+ \frac{1}{\gamma}\left(\Vert z - x\Vert+\Vert x - y\Vert\right)^{p-1}\Vert z - x\Vert
\nonumber\\&\leq \fgam{\gf}{p}{\gamma}(x)+\left(L_\nu \left(\Vert z-x\Vert+\Vert x-y\Vert\right)+\frac{1}{p\gamma}\Vert z - x\Vert^{p-\nu}\right.
\nonumber\\&\qquad\qquad\qquad\qquad\left.+\frac{1}{\gamma}\left(\Vert z - x\Vert+\Vert x - y\Vert\right)^{p-1}\Vert z-x\Vert^{1-\nu}\right)\Vert z-x\Vert^{\nu}.
 \end{align}
Since $\Vert x\Vert<r$ and $\Vert y\Vert\leq \tau$, we have
$ \Vert x- y\Vert\leq r+\tau$.
In addition, since $\Vert x - z\Vert<2r$, from \eqref{findtau:2:eq4:l4},
\[
\fgam{\gf}{p}{\gamma}(z)\leq \fgam{\gf}{p}{\gamma}(x)+\left(L_\nu \left(3r+\tau\right)+\frac{(2r)^{p-\nu}}{p\gamma}+\frac{1}{\gamma}\left(3r+\tau\right)^{p-1}(2r)^{1-\nu}\right)\Vert z-x\Vert^{\nu}.
\]
Exchanging the roles of $x$ and $z$ yields the desired two-sided bound with
\[
\vert \fgam{\gf}{p}{\gamma}(z)-\fgam{\gf}{p}{\gamma}(x)\vert\leq L_{\nu,r,\gamma} \Vert z-x\Vert^{\nu}.
\]
where
$L_{\nu,r,\gamma}:=\left(L_\nu\left(3r+\tau\right)+\frac{(2r)^{p-\nu}}{p\gamma}+\frac{1}{\gamma}\left(3r+\tau\right)^{p-1}(2r)^{1-\nu}\right)$. 
In particular, this H\"older modulus depends on both $\gamma$ and $r$, giving our desired result.
\end{proof}
The H\"older constant in Theorem~\ref{th:hol} is local and is not uniform with respect to $\gamma$. Indeed, the quantity $\mathcal L_{\nu,r,\gamma}$ depends on the radius $r$ and on the parameter $\gamma$ through the bound $\tau(\gamma,r)$ obtained in Proposition~\ref{prop:findtau:lip}; see also Remark~\ref{rem:ontau}.

As a direct consequence of Theorem~\ref{th:hol}, we obtain Lipschitz continuity on a ball for the classical FBE ($p=2$).

\begin{corollary}[Lipschitz property of HiFBE]\label{cor:lip}
Let $p=2$ and Assumption~\ref{assum:mainassum:g} hold. Let
$f\in \mathcal{C}^{1, 1}_{L}(\R^n)$ and  $\gf(x):=f(x)+g(x)$. 
Then, for each $r>0$, there exists $\ov{\gamma}_r\in (0, \gamma^{g, p})$ such that, for every 
$\gamma\in \left(0, \ov{\gamma}_r\right)$, there exists a constant $\mathcal{L}_{r,\gamma}>0$ satisfying
\[
\vert \fgam{\gf}{p}{\gamma}(z)-\fgam{\gf}{p}{\gamma}(x)\vert\leq \mathcal{L}_{r,\gamma} \Vert z-x\Vert, \qquad \forall x, z\in \mb(0; r).
\]
\end{corollary}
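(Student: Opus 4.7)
The proof plan is straightforward: Corollary~\ref{cor:lip} is the specialization of Theorem~\ref{th:hol} to the case $\nu=1$. My strategy is therefore to verify that every hypothesis of Theorem~\ref{th:hol} is met under the assumptions of the corollary, and then invoke that theorem directly.

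First I would observe that $p=2$ together with the identification $p=1+\nu$ forces $\nu=1$. The assumption $f\in \mathcal{C}^{1,1}_{L}(\R^n)$ is exactly the statement that $\nabla f$ is $1$-H\"older continuous with constant $L_\nu=L$, so Assumption~\ref{assum:mainassum:f} holds with $\nu=1$. Assumption~\ref{assum:mainassum:g} is assumed outright, and $\gf(x)=f(x)+g(x)$ has the required composite form.

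Next, I would apply Theorem~\ref{th:hol} with $\nu=1$. This theorem guarantees the existence of $\ov{\gamma}\in(0,\gamma^{g,p})$ and a constant $\mathcal{L}_\nu>0$ (which we rename $\mathcal{L}:=\mathcal{L}_1$) such that, for every $\gamma\in(0,\ov{\gamma})$ and every $x,z\in \mb(0;r)$,
\[
\vert \fgam{\gf}{p}{\gamma}(z)-\fgam{\gf}{p}{\gamma}(x)\vert \leq \mathcal{L}_\nu \Vert z-x\Vert^{\nu}=\mathcal{L}\Vert z-x\Vert,
\]
which is exactly the Lipschitz estimate claimed in Corollary~\ref{cor:lip}.

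Since every step reduces to a direct substitution into the already-proved Theorem~\ref{th:hol}, there is no genuine obstacle. The only point worth stating explicitly is the consistency of the constants: the bound $\mathcal{L}=\mathcal{L}_1$ obtained from the proof of Theorem~\ref{th:hol} specializes, via $\nu=1$ and $p=2$, to $\mathcal{L}=L\,(3r+\tau)+(2r)/(2\gamma)+(3r+\tau)/\gamma$, with $\tau>0$ furnished by Proposition~\ref{prop:findtau:lip}, confirming that $\mathcal{L}$ is finite and depends only on $r$, $L$, and $\gamma$. This completes the proposal.
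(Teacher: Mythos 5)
Your proposal is correct and coincides with the paper's treatment: the corollary is stated there as a direct consequence of Theorem~\ref{th:hol}, obtained by setting $\nu=1$ (so $p=2$ and $L_\nu=L$), exactly as you do. Your explicit specialization of the constant $\mathcal{L}_1$ is consistent with the expression derived in the proof of Theorem~\ref{th:hol}.
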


Under the assumptions $f\in C^{1,1}(\mathbb{R}^n)$ and $g$ proper, lsc, and prox-bounded,  \cite[Proposition 4.2]{Themelis18} showed that the forward--backward envelope is real-valued and strictly continuous on $\mathbb{R}^n$ for every $\gamma\in(0,\gamma^{g,2})$. In finite dimensions, this implies local Lipschitz continuity. Therefore, the case $p=2$ in Corollary~\ref{cor:lip} is consistent with the known theory of the classical forward--backward envelope.

\subsection{{\bf Calmness and uniform boundedness}}
\label{subsec:calm}
In this subsection, we recall the notion of \textit{$p$-calmness}, which plays a key role in establishing the differentiability of HiFBE in the next section.

\begin{definition}[$p$-calm points]\label{def:calmp}\cite{Kabganidiff}
Let $p>1$ and $\gh: \R^n \to \Rinf$ be a proper lsc function, and let $\ov{x}\in \dom{\gh}$. 
Then, $\ov{x}$ is called a \textit{$p$-calm point} of $\gh$ with constant $M>0$ if
\[
\gh(x) + M\Vert x - \ov{x}\Vert^p > \gh(\ov{x}), \qquad \forall\, x\in \R^n,~x\neq \ov{x}.
\]
\end{definition}

The concept of $p$-calmness captures a localized sharpness of the objective. 
The following proposition highlights its relevance and shows how it connects minimizers, fixed points of $\Tprox{\gf}{\gamma}{p}$, and high-order prox-boundedness.

\begin{proposition}[Relationships with $p$-calmness]\label{lem:progpcalm}
Let $f:\R^n\to \R$ be Fr\'{e}chet differentiable, and $g:\R^n\to \Rinf$ be a proper lsc function. For $\gf(x):=f(x)+g(x)$, the following statements hold:
\begin{enumerate}[label=(\textbf{\alph*}), font=\normalfont\bfseries, leftmargin=0.7cm]
\item \label{lem:progpcalm:mincalm} If $\ov{x}\in \argmin{x\in \R^n}\gf(x)$, then $\ov{x}$ is a $p$-calm point of
$\gf$ for any $M>0$ and $p>1$;
\end{enumerate}
If $f\in \mathcal{C}^{1, \nu}_{L_\nu}(\R^n)$ and $p=1+\nu$, then, 
 \begin{enumerate}[label=(\textbf{\alph*}), font=\normalfont\bfseries, leftmargin=0.7cm, start=2]
\item \label{lem:progpcalm:critic2} 
if $\ov{x}$ is a $p$-calm point of $\gf$, then 
there exists $\ov{\gamma}>0$ such that for all 
$\gamma\in \left(0, \ov{\gamma}\right]$, we have  $\Tprox{\gf}{\gamma}{p}(\ov{x})=\{\ov{x}\}$;
\item \label{lem:progpcalm:critic3} if $\ov{x}$ is a $p$-calm point of $\gf$, then  $-\nabla f(\ov{x}) \in \partial g(\ov{x})$;
\item \label{lem:progpcalm:calm} if $\ov{x}$ is a $p$-calm point of $\gf$,  then $g$ is high-order prox-bounded.
\end{enumerate}
\end{proposition}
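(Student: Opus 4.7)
The plan is to handle the four assertions separately, in the order stated, because each subsequent part reuses earlier ones.

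For \textbf{(a)} the claim is essentially trivial from the definition: if $\ov{x}$ minimizes $\gf$, then $\gf(x)\geq \gf(\ov{x})$ for every $x\in \R^n$, so that $\gf(x)+M\Vert x-\ov{x}\Vert^p>\gf(\ov{x})$ whenever $x\neq \ov{x}$, for any choice of $M>0$ and $p>1$. I would simply record this observation in one line.

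For \textbf{(b)} I would compare the objective of the subproblem defining $\Tprox{\gf}{\gamma}{p}(\ov{x})$ at a candidate minimizer $y$ with its value at $\ov{x}$. The H\"olderian descent lemma (Fact~\ref{fact:holder:declem}) applied to $f$ gives
\[
f(\ov{x})+\langle \nabla f(\ov{x}),y-\ov{x}\rangle\;\geq\; f(y)-\frac{L_\nu}{p}\Vert y-\ov{x}\Vert^p,
\]
so that the subproblem objective at $y$ is bounded below by $\gf(y)+\bigl(\tfrac{1}{p\gamma}-\tfrac{L_\nu}{p}\bigr)\Vert y-\ov{x}\Vert^p$. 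Combining with the $p$-calmness inequality $\gf(y)>\gf(\ov{x})-M\Vert y-\ov{x}\Vert^p$ yields a strict lower bound of the form $\gf(\ov{x})+\bigl(\tfrac{1}{p\gamma}-\tfrac{L_\nu}{p}-M\bigr)\Vert y-\ov{x}\Vert^p$. Choosing $\ov{\gamma}$ small enough that the bracketed quantity is strictly positive (e.g.\ $\ov{\gamma}<1/(L_\nu+pM)$) forces this value to exceed the subproblem value $\gf(\ov{x})$ at $y=\ov{x}$, giving $\Tprox{\gf}{\gamma}{p}(\ov{x})=\{\ov{x}\}$.

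For \textbf{(c)} I would invoke \textbf{(b)}: for small $\gamma$, $\ov{x}$ is the unique (hence global) minimizer of $y\mapsto f(\ov{x})+\langle \nabla f(\ov{x}),y-\ov{x}\rangle+g(y)+\tfrac{1}{p\gamma}\Vert y-\ov{x}\Vert^p$. The generalized Fermat rule for the Fr\'echet subdifferential applied to this minimum point, together with the sum rule (the affine term is smooth, and the regularizer $\tfrac{1}{p\gamma}\Vert \cdot-\ov{x}\Vert^p$ has gradient $0$ at $\ov{x}$ since $p>1$), yields $0\in \nabla f(\ov{x})+\widehat{\partial}g(\ov{x})$, hence $-\nabla f(\ov{x})\in \widehat{\partial}g(\ov{x})\subseteq \partial g(\ov{x})$.

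For \textbf{(d)} the cleanest route is to bound $\fgam{\gf}{p}{\gamma}(\ov{x})$ from below for a suitable $\gamma$ and then appeal to Fact~\ref{th:basichifbe}\ref{th:basichifbe:finproxb}. The $p$-calmness inequality gives $g(y)\geq \gf(\ov{x})-f(y)-M\Vert y-\ov{x}\Vert^p$, and the H\"olderian descent bound on $f(y)$ cancels the linear term $\langle \nabla f(\ov{x}),y-\ov{x}\rangle$ appearing in the definition of $\fgam{\gf}{p}{\gamma}(\ov{x})$. After this cancellation one obtains
\[
\fgam{\gf}{p}{\gamma}(\ov{x})\;\geq\; \gf(\ov{x})+\mathop{\bs{\inf}}\limits_{y\in\R^n}\Bigl\{\Bigl(\tfrac{1}{p\gamma}-M-\tfrac{L_\nu}{p}\Bigr)\Vert y-\ov{x}\Vert^p\Bigr\},
\]
and for $\gamma\leq 1/(pM+L_\nu)$ the bracketed coefficient is nonnegative, so the infimum is $0$. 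Thus $\fgam{\gf}{p}{\gamma}(\ov{x})\geq \gf(\ov{x})>-\infty$, and Fact~\ref{th:basichifbe}\ref{th:basichifbe:finproxb} concludes high-order prox-boundedness of $g$.

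The only mildly delicate step will be the subdifferential calculus in \textbf{(c)}: I need to justify that the Fr\'echet subdifferential sum rule applies at $\ov{x}$, which follows from the smoothness of all terms except $g$ at that point. Everything else is a direct combination of the H\"olderian descent lemma with the defining inequality of $p$-calmness, with the calibration of $\ov{\gamma}$ producing a positive coefficient on $\Vert y-\ov{x}\Vert^p$.
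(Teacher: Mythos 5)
Your proposal is correct and follows essentially the same route as the paper's proof: part (a) directly from the definition, part (b) by combining the H\"olderian descent lemma with the $p$-calmness inequality for sufficiently small $\gamma$, part (c) via the Fermat rule and sum rule applied to the subproblem whose unique minimizer is $\ov{x}$, and part (d) by lower-bounding $\fgam{\gf}{p}{\gamma}(\ov{x})$ and invoking Fact~\ref{th:basichifbe}~\ref{th:basichifbe:finproxb}. The only differences are cosmetic choices of the threshold $\ov{\gamma}$ (the paper uses $\min\{\tfrac{1}{2L_\nu},\tfrac{1}{2pM}\}$ where you use $\tfrac{1}{L_\nu+pM}$), which do not affect the validity of the argument.
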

\begin{proof}
  \ref{lem:progpcalm:mincalm} It follows directly from the definition of a $p$-calm point.
  \\
\ref{lem:progpcalm:critic2} If $\ov{x}$ is a $p$-calm point of $\gf$ with constant $M>0$, then for each $\gamma\in \left(0, \bs\min\{\frac{1}{2L_\nu},\frac{1}{2pM}\}\right]$, we have
$L_\nu\leq \frac{1}{2\gamma}$ and $M\leq \frac{1}{2p\gamma}$. Thus, for all $x\in\R^n$ with $x\neq \ov{x}$,
\begin{align*}
 \gf(\ov{x})<\gf(x)+M\Vert x-\ov{x}\Vert^p&\leq \gf(x)+\frac{1}{2p\gamma}\Vert x-\ov{x}\Vert^p
 \\&\leq f(\ov{x})+\langle \nabla f(\ov{x}), x- \ov{x}\rangle+\frac{L_\nu}{p}\Vert x-\ov{x}\Vert^p+g(x)+\frac{1}{2p\gamma}\Vert x-\ov{x}\Vert^p,
 \\&\leq f(\ov{x})+\langle \nabla f(\ov{x}), x- \ov{x}\rangle+g(x)+\frac{1}{p\gamma}\Vert x-\ov{x}\Vert^p,
\end{align*}
which implies $\Tprox{\gf}{\gamma}{p}(\ov{x})=\{\ov{x}\}$.
Setting $\ov{\gamma}= \bs\min\{\frac{1}{2L_\nu},\frac{1}{2pM}\}$ completes the proof.
\\
\ref{lem:progpcalm:critic3} 
From Assertion~\ref{lem:progpcalm:critic2}, there exists $\ov{\gamma}>0$ such that for all 
$\gamma\in \left(0, \ov{\gamma}\right]$, we have  $\Tprox{\gf}{\gamma}{p}(\ov{x})=\{\ov{x}\}$.
Hence, by \cite[Exercise~8.8(c) and Theorem~10.1]{Rockafellar09}, 
for each $\gamma\in \left(0, \ov{\gamma}\right]$,
 \[
0\in \partial \left(f(\ov{x})+\langle \nabla f(\ov{x}) , \cdot - \ov{x}\rangle +g(\cdot)+\frac{1}{p\gamma}\Vert \ov{x}-\cdot\Vert^p\right)(\ov{x})
=\nabla f(\ov{x}) + \partial g(\ov{x}),
 \]
 which gives $-\nabla f(\ov{x}) \in \partial g(\ov{x})$.
\\
\ref{lem:progpcalm:calm} 
Let $\gamma>0$ be such that $M+\frac{L_\nu}{p}\le\frac{1}{p\gamma}$. 
Then, for all $y\in \R^n$,
\[
\begin{aligned}
\gf(\ov{x})&\leq \gf(y)+M\Vert y- \ov{x}\Vert^p
\\& \leq f(\ov{x})+\langle \nabla f(\ov{x}), y - \ov{x}\rangle + g(y)+\left(M+\frac{L_\nu}{p}\right)\Vert y- \ov{x}\Vert^p
\\& \leq f(\ov{x})+\langle \nabla f(\ov{x}), y - \ov{x}\rangle + g(y)+\frac{1}{p\gamma}\Vert y- \ov{x}\Vert^p.
\end{aligned}
\]
Hence, $\gf(\ov{x})\leq\fgam{\gf}{p}{\gamma}(\ov{x})$.
Together with  Fact~\ref{th:basichifbe}~\ref{th:basichifbe:finproxb}, $g$ is high-order prox-bounded.
\end{proof}

Proposition~\ref{prop:findtau:lip} shows that for any $r>0$ there exists a range of $\gamma$ such that $\Tprox{\gf}{\gamma}{p}\left(\mb(0; r)\right)$ is bounded. 
Next, we prove a complementary statement: for any target radius $\varepsilon>0$, there exists $r_\varepsilon>0$ such that
$\Tprox{\gf}{\gamma}{p}\left(\mb(0; r_\varepsilon)\right)\subseteq \mb(0; \varepsilon)$.

\begin{theorem}[Uniform boundedness of HiFBS]\label{th:prox:pcalm}
Let Assumption~\ref{assum:mainassum:f} hold with $p=1+\nu$, let $g:\R^n\to \Rinf$ be proper and lsc, and set $\gf(x):=f(x)+g(x)$.
Suppose $\ov{x}=0$ is a $p$-calm point of $\gf$ and $\gf(\ov{x})=0$. Then, there exists $\ov{\gamma}>0$ such that for every 
 $\gamma\in \left(0, \ov{\gamma}\right)$  and every $\varepsilon>0$ there is a neighborhood $U$ of $\ov{x}$ such that for any $x\in U$, $\Tprox{\gf}{\gamma}{p}(x)\neq \emptyset$. Moreover, if $y\in \Tprox{\gf}{\gamma}{p}(x)$, 
then,
\[
\Vert y\Vert< \varepsilon,\quad g(y)<g(\ov{x})+\varepsilon, \quad \frac{1}{\gamma}\Vert x-y\Vert^{p-1}+\Vert\nabla f(x) - \nabla f(\ov{x})\Vert< \varepsilon.
\]
\end{theorem}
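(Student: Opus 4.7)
The plan is to reduce all three claimed inequalities to a single sequential convergence statement and then upgrade it by a standard contradiction argument. Specifically, I will show that whenever $x^k\to\ov{x}$ and $y^k\in\Tprox{\gf}{\gamma}{p}(x^k)$, one has $y^k\to\ov{x}$ and $g(y^k)\to g(\ov{x})$; the three bounds of the theorem then follow along such sequences, and the required neighborhood $U$ is produced by negation.

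\emph{Step 1 (choice of $\ov{\gamma}$ and nonemptiness).} Since $\ov{x}=0$ is a $p$-calm point of $\gf$, Proposition~\ref{lem:progpcalm}~\ref{lem:progpcalm:calm} guarantees that $g$ is high-order prox-bounded with some threshold $\gamma^{g,p}>0$, and Proposition~\ref{lem:progpcalm}~\ref{lem:progpcalm:critic2} furnishes $\ov{\gamma}_0>0$ with $\Tprox{\gf}{\gamma}{p}(\ov{x})=\{\ov{x}\}$ for every $\gamma\in(0,\ov{\gamma}_0]$. I set $\ov{\gamma}:=\min\{\ov{\gamma}_0,\gamma^{g,p}\}$ and fix $\gamma\in(0,\ov{\gamma})$; then $\Tprox{\gf}{\gamma}{p}(x)$ is nonempty for every $x\in\R^n$ by Fact~\ref{th:hopbfb}~\ref{th:hopbfb:proxb:proxnonemp}, which already supplies the first conclusion of the theorem.

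\emph{Step 2 (the core convergences).} For any $x^k\to\ov{x}$ and any selection $y^k\in\Tprox{\gf}{\gamma}{p}(x^k)$, Fact~\ref{th:hopbfb}~\ref{th:hopbfb:proxb:conv} yields that $\{y^k\}$ is bounded and that all of its cluster points belong to $\Tprox{\gf}{\gamma}{p}(\ov{x})=\{\ov{x}\}$; hence the entire sequence converges: $y^k\to\ov{x}$. In particular $\Vert y^k\Vert\to 0$, $\Vert x^k-y^k\Vert\to 0$, and the continuity of $\nabla f$ (granted by Assumption~\ref{assum:mainassum:f}) gives $\Vert\nabla f(x^k)-\nabla f(\ov{x})\Vert\to 0$. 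For $g(y^k)\to g(\ov{x})$ I test the defining minimization against the competitor $\ov{x}$:
\[
g(y^k)+\frac{1}{p\gamma}\Vert x^k-y^k\Vert^p+\langle\nabla f(x^k), y^k-x^k\rangle\leq g(\ov{x})+\frac{1}{p\gamma}\Vert x^k-\ov{x}\Vert^p+\langle\nabla f(x^k),\ov{x}-x^k\rangle.
\]
Every term besides $g(y^k)$ and $g(\ov{x})$ vanishes in the limit, so $\limsup_k g(y^k)\leq g(\ov{x})$; combined with the lsc of $g$ this yields $g(y^k)\to g(\ov{x})$.

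\emph{Step 3 (from sequential to neighborhood form and main obstacle).} If, for the above $\ov{\gamma}$, some $\gamma\in(0,\ov{\gamma})$ and some $\varepsilon>0$ admit no neighborhood $U$ satisfying the assertion, then one can select $x^k\to\ov{x}$ and $y^k\in\Tprox{\gf}{\gamma}{p}(x^k)$ such that at least one of the three inequalities fails for every $k$; a pigeonhole argument pins down which one uniformly in $k$, and this contradicts the convergences established in Step~2. The main obstacle of the proof is the value convergence $g(y^k)\to g(\ov{x})$: because $g$ is only lsc and may be discontinuous at $\ov{x}$, the matching upper bound is not for free and must be extracted from the optimality of $y^k$ through the competitor inequality above. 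Every other ingredient then reduces to the existence and stability properties of HiFBS and the continuity of $\nabla f$.
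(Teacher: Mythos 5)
Your proof is correct, but it takes a genuinely different route from the paper's. The paper argues quantitatively: starting from the $p$-calmness inequality, the H\"olderian descent lemma, and the elementary bound $\Vert y\Vert^p\le 2^{p-1}(\Vert x\Vert^p+\Vert y-x\Vert^p)$, it derives explicit estimates of the form $\Vert y\Vert\le\bigl(6\mu\Vert x\Vert^p+2p\mu\gamma\delta\bigr)^{1/p}$ and an analogous bound on $g(y)$ for \emph{approximate} minimizers, then chooses $\theta$ and $\delta$ to satisfy three explicit inequalities defining $U=\mb(0;\theta)$; nonemptiness is obtained by showing the subproblem can be restricted to a compact set. You instead package everything into a soft compactness argument: single-valuedness $\Tprox{\gf}{\gamma}{p}(\ov{x})=\{\ov{x}\}$ from Proposition~\ref{lem:progpcalm}~\ref{lem:progpcalm:critic2}, boundedness and outer semicontinuity of HiFBS from Fact~\ref{th:hopbfb}~\ref{th:hopbfb:proxb:conv} to force $y^k\to\ov{x}$ along any $x^k\to\ov{x}$, the competitor inequality at $\ov{x}$ plus lower semicontinuity to get $g(y^k)\to g(\ov{x})$, and a negation/pigeonhole step to convert the sequential statements into the neighborhood form. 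All the cited ingredients apply under the stated hypotheses (in particular, high-order prox-boundedness of $g$ is available via Proposition~\ref{lem:progpcalm}~\ref{lem:progpcalm:calm}, so Fact~\ref{th:hopbfb} is legitimately invoked), and the quantifier structure matches the theorem. What you lose relative to the paper is constructiveness: the paper's argument produces an explicit radius $\theta$ for $U$ and, as a by-product, the linear estimate $\Vert x-y\Vert\le\bigl(1+(6\mu)^{1/p}\bigr)\Vert x\Vert$, which is not part of the statement but is the kind of quantitative control one often wants downstream; your argument gives no rate or explicit neighborhood. What you gain is brevity and a clean isolation of the only genuinely delicate point, namely the value convergence $g(y^k)\to g(\ov{x})$, which you correctly extract from optimality rather than from any continuity of $g$.
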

\begin{proof}
By Proposition~\ref{lem:progpcalm}~\ref{lem:progpcalm:calm}, $g$ is high-order prox-bounded with threshold $\gamma^{g, p}>0$. Hence, by Fact~\ref{th:hopbfb}~\ref{th:hopbfb:proxb:proxnonemp}, $\Tprox{\gf}{\gamma}{p}(x)\neq \emptyset$ for all $x\in \R^n$ and $\gamma\in(0,\gamma^{g,p})$.
We now show the uniform local bounds on the elements of $\Tprox{\gf}{\gamma}{p}(x)$ near $\ov x$.

Let $M > 0$ be the $p$-calmness constant and choose 
\[\gamma\in \left(0, \ov{\gamma}:=\bs\min\left\{\frac{2^{-p}}{Mp}, \frac{1}{2L_\nu},\gamma^{g, p}\right\}\right).\]
For any $x\in\R^n$, it holds that
\[
\begin{aligned}
\fgam{\gf}{p}{\gamma}(x)&\leq  f(x)+\langle \nabla f(x), \ov{x}- x\rangle+g(\ov{x})+\frac{1}{p\gamma}\Vert x- \ov{x}\Vert^p
\\&\overset{(i)}{\leq} f(\ov{x})+\langle \nabla f(\ov{x}), x- \ov{x}\rangle+\frac{L_\nu}{p}\Vert x- \ov{x}\Vert^{p}+\langle \nabla f(x), \ov{x}- x\rangle+g(\ov{x})+\frac{1}{p\gamma}\Vert x- \ov{x}\Vert^p
\\&\overset{(ii)}{\leq} f(\ov{x})+\langle \nabla f(\ov{x})-\nabla f(x), x- \ov{x}\rangle+g(\ov{x})+\frac{2}{p\gamma}\Vert x- \ov{x}\Vert^p
\\&\overset{(iii)}{\leq}\frac{2L_\nu}{p} \Vert x\Vert^{p}+\frac{2}{p\gamma}\Vert x\Vert^p\overset{(iv)}{\leq}\frac{3}{p\gamma} \Vert x\Vert^{p},
\end{aligned}
\]
where $(i)$ uses $f\in \mathcal{C}^{1, \nu}_{L_{\nu}}(\R^n)$,
$(ii)$ uses $L_\nu<\frac{1}{2\gamma}<\frac{1}{\gamma}$,
 $(iii)$ uses Lemma~\ref{lem:hol:oneside} and $\gf(\ov{x})=0$, and $(iv)$ uses $2L_\nu<\frac{1}{\gamma}$. 
Thus,
\begin{equation}\label{eq1:lem:prox:pcalm}
\fgam{\gf}{p}{\gamma}(x)\leq \frac{3}{p\gamma}\Vert x\Vert^p.
\end{equation}
Fix an arbitrary $\delta>0$. By the definition of the infimum,
there exists $y\in\R^n$ such that
\begin{equation}\label{eq1b:lem:prox:pcalm}
f(x)+\langle \nabla f(x) , y - x\rangle +g(y)+\frac{1}{p\gamma}\Vert x-y\Vert^p\leq \fgam{\gf}{p}{\gamma}(x)+\delta.
\end{equation}
For such $y$, from \eqref{eq1:lem:prox:pcalm}, \eqref{eq1b:lem:prox:pcalm}, $p$-calmness of $\ov{x}$ with constant $M$, $\gf(\ov{x})=0$, and $\gamma<\frac{1}{2L_{\nu}}$, we have
\[
\begin{aligned}
-M\Vert y\Vert^{p}+\frac{1}{2p\gamma}\Vert x- y\Vert^p &\leq f(y)+g(y)+\frac{1}{2p\gamma}\Vert x- y\Vert^p
\nonumber\\&\leq f(x)+\langle \nabla f(x), y-x\rangle+\frac{L_\nu}{p}\Vert x- y\Vert^{p}+g(y)+\frac{1}{2p\gamma}\Vert x- y\Vert^{p}
\nonumber\\&\overset{(i)}{\leq}f(x)+\langle \nabla f(x), y-x\rangle+g(y)+\frac{1}{p\gamma}\Vert x- y\Vert^{p}
\nonumber\\& \leq \fgam{\gf}{p}{\gamma}(x)+\delta
\leq  \frac{3}{p\gamma}\Vert x\Vert^p+\delta,
\end{aligned}
\]
where $(i)$ is obtained from $\frac{L_\nu}{p}<\frac{1}{2p\gamma}$.
Hence, using \eqref{eq:baseq:pgen},
\[
 -M\Vert y\Vert^p+\frac{1}{2p\gamma}\left(2^{1-p}\Vert y\Vert^p-\Vert x\Vert^p\right)\leq  \frac{3}{p\gamma}\Vert x\Vert^p+\delta,
\]
i.e.,
$
(2^{1-p}-2Mp\gamma)\Vert y\Vert^p\leq 7\Vert x\Vert^p+2p\gamma\delta
$.
Since $\gamma<\frac{2^{-p}}{Mp}$, by setting $\mu:=(2^{1-p}-2Mp\gamma)^{-1}$, we have $\mu>0$ and
\begin{equation}\label{eq4:lem:prox:pcalm}
\Vert y\Vert\leq \left(7\mu\Vert x\Vert^p+2p\mu\gamma\delta\right)^{\frac{1}{p}}.
\end{equation}
In addition, from \eqref{eq1b:lem:prox:pcalm},
\[
\langle \nabla f(x), y -x \rangle +g(y)+\frac{1}{p\gamma}\Vert x- y\Vert^p\leq  \langle \nabla f(x), \ov{x} -x \rangle +g(\ov{x})+\frac{1}{p\gamma}\Vert x- \ov{x}\Vert^p+\delta.
\]
Hence,
\[
\begin{aligned}
g(y)+\frac{1}{p\gamma}\Vert x- y\Vert^p&\leq  \langle \nabla f(x), \ov{x} -x \rangle -\langle \nabla f(x), y -x \rangle +g(\ov{x})+\frac{1}{p\gamma}\Vert x- \ov{x}\Vert^p+\delta\\
&= \langle \nabla f(x), \ov{x} -y \rangle+g(\ov{x})+\frac{1}{p\gamma}\Vert x- \ov{x}\Vert^p+\delta\\
&\leq g(\ov{x})+\Vert \nabla f(x)\Vert \Vert y\Vert+\frac{1}{p\gamma}\Vert x\Vert^p+\delta\\
&\overset{(i)}{\leq} g(\ov{x})+\left(L_\nu\Vert x\Vert^\nu+ \Vert \nabla f(\ov{x})\Vert\right) \Vert y\Vert+\frac{1}{p\gamma}\Vert x\Vert^p+\delta\\
&\overset{(ii)}{\leq} g(\ov{x}) +\left(L_\nu\Vert x\Vert^\nu+ \Vert \nabla f(\ov{x})\Vert\right) \left(7\mu\Vert x\Vert^p+2p\mu\gamma\delta\right)^{\frac{1}{p}}+\frac{1}{p\gamma}\Vert x\Vert^p+\delta,
\end{aligned}
\]
where the inequality $(i)$ follows from
\[
\Vert \nabla f(x)\Vert-\Vert \nabla f(\ov{x})\Vert\leq \Vert \nabla f(x)- \nabla f(\ov{x})\Vert\leq L_\nu\Vert x\Vert^\nu,
\]
and $(ii)$ from \eqref{eq4:lem:prox:pcalm}.
Thus,
\begin{equation}\label{eq7:lem:prox:pcalm}
g(y)\leq g(\ov{x}) +\left(L_\nu\Vert x\Vert^\nu+ \Vert \nabla f(\ov{x})\Vert\right) \left(7\mu\Vert x\Vert^p+2p\mu\gamma\delta\right)^{\frac{1}{p}}+\frac{1}{p\gamma}\Vert x\Vert^p+\delta.
\end{equation}
Now, define $C:=\{y\in\R^n: \Vert y\Vert\leq \varepsilon\}$,
which is compact. Choose small $\delta>0$ and $\theta>0$ such that
\begin{align}
&\left(7\mu\theta^p+2p\mu\gamma\delta\right)^{\frac{1}{p}}< \varepsilon;\label{eq6a:lem:prox:pcalm}\\
& \frac{1}{\gamma}\left(\left(1+\left(7\mu\right)^{\frac{1}{p}}\right)\theta\right)^{p-1}+L_\nu\theta^\nu< \varepsilon;\label{eq6c:lem:prox:pcalm}\\
& \left(L_\nu\theta^\nu+ \Vert \nabla f(\ov{x})\Vert\right) \left(7\mu\theta^p+2p\mu\gamma\delta\right)^{\frac{1}{p}}+\frac{1}{p\gamma}\theta^p+\delta< \varepsilon.\label{eq6d:lem:prox:pcalm}
\end{align}
and define $U:=\{x\in\R^n: \Vert x\Vert< \theta\}$. Let $x\in U$ and $y$ satisfies \eqref{eq1b:lem:prox:pcalm}. Then, by \eqref{eq4:lem:prox:pcalm} and \eqref{eq6a:lem:prox:pcalm}, $\Vert y\Vert<\varepsilon$ and by \eqref{eq7:lem:prox:pcalm} and \eqref{eq6d:lem:prox:pcalm},
$g(y)<g(\ov{x})+\varepsilon$.
As such, if $x\in U$ and $y$ satisfies \eqref{eq1b:lem:prox:pcalm}, then $y\in C$. Specifically,
if $x\in U$, then 
\[\Tprox{\gf}{\gamma}{p}(x)=\argmin{y\in C} \left\{f(x)+\langle \nabla f(x), y-x\rangle+g(y)+\frac{1}{p\gamma}\Vert x- y\Vert^p\right\}.
\]
Assume that  $x\in U$ and $y\in \Tprox{\gf}{\gamma}{p}(x)$. Then, $y$ satisfies \eqref{eq1b:lem:prox:pcalm} for any $\delta>0$. Thus, by $\delta\downarrow 0$ and \eqref{eq4:lem:prox:pcalm},
\[
\Vert y\Vert\leq  \left(7\mu\Vert x\Vert^p\right)^{\frac{1}{p}}= \left(7\mu\right)^{\frac{1}{p}}\Vert x\Vert,
\]
i.e., 
\[
\Vert x - y\Vert\leq \Vert x\Vert +\left(7\mu\right)^{\frac{1}{p}}\Vert x\Vert=\left(1+\left(7\mu\right)^{\frac{1}{p}}\right)\Vert x\Vert.
\]
Moreover, by \eqref{eq6c:lem:prox:pcalm}, we get
\[
\begin{aligned}
 \frac{1}{\gamma}\Vert x-y\Vert^{p-1}+\Vert\nabla f(x) - \nabla f(\ov{x})\Vert&\leq \frac{1}{\gamma}\Vert x-y\Vert^{p-1}+L_\nu\Vert x\Vert^\nu
 \\&\leq  \frac{1}{\gamma}\left(\left(1+\left(7\mu\right)^{\frac{1}{p}}\right)\Vert x\Vert\right)^{p-1}+L_\nu\Vert x\Vert^\nu< \varepsilon,
\end{aligned}
\]
giving our desired result.

\end{proof}



\section{Differentiable properties and weak smoothness}\label{sec:diff}
In this section, we first present results concerning the Fr\'{e}chet/regular and Mordukhovich/limiting subdifferentials of HiFBE. Then, we obtain necessary and sufficient conditions for Fr\'{e}chet differentiability of HiFBE. Finally, we also provide some conditions for weak smoothness of HiFBE.

In \cite[Lemma~A.3]{kabgani2025fb}, a formula for the Mordukhovich subdifferential of HiFBE was established. The argument used there also suggests the expression that appears at the level of the Fr\'echet/regular subdifferential. The following lemma records this regular-subdifferential formula explicitly. Then, in Corollary~\ref{cor:morsubhifbe}, we derive the corresponding Mordukhovich subdifferential formula by passing from regular to limiting subgradients.

\begin{lemma}[Fr\'{e}chet/regular subdifferential of HiFBE]\label{lem:frech:hifbe}
Let $p>1$, Assumption~\ref{assum:mainassum:g} hold,
and let $\gf:\R^n\to\Rinf$ be defined by $\gf(x):=f(x)+g(x)$, where $f\in \mathcal{C}^2(U)$ for some open neighborhood $U$ of $\ov{x}\in \dom{\gf}$.
Then, for each $\gamma\in(0,\gamma^{g,p})$ and $\ov{y}\in \Tprox{\gf}{\gamma}{p}(\ov{x})$, we have
\begin{equation}\label{eq:lem:frech:hifbe:1}
\widehat{\partial} \fgam{\gf}{p}{\gamma}(\ov{x}) - \nabla^2 f(\ov{x})(\ov{y}-\ov{x}) 
\subseteq 
\left(\nabla f(\ov{x}) + \partial g(\ov{y})\right) 
\cap 
\left\{\frac{1}{\gamma}\Vert \ov{x}-\ov{y}\Vert^{p-2}(\ov{x}-\ov{y})\right\}.
\end{equation}
\end{lemma}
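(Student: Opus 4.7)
The plan is to exploit a $\mathcal{C}^1$-majorant of $\fgam{\gf}{p}{\gamma}$ that touches the envelope at $\ov{x}$, and then to invoke Fermat's rule for the inner minimization defining $\ov y$. Given a fixed $\ov y\in \Tprox{\gf}{\gamma}{p}(\ov x)$, I would introduce the auxiliary function
\[
h(x):=f(x)+\langle \nabla f(x),\ov y-x\rangle+g(\ov y)+\frac{1}{p\gamma}\Vert x-\ov y\Vert^p.
\]
By the infimum definition of HiFBE in \eqref{HFBE}, $\fgam{\gf}{p}{\gamma}(x)\leq h(x)$ for every $x\in\R^n$, with equality at $x=\ov x$ since $\ov y$ attains the infimum. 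Under $f\in\mathcal{C}^2(U)$ and the observation that $x\mapsto\frac{1}{p\gamma}\Vert x-\ov y\Vert^p$ is $\mathcal{C}^1$ on $\R^n$ for every $p>1$ (its gradient $\frac{1}{\gamma}\Vert x-\ov y\Vert^{p-2}(x-\ov y)$ extends continuously to $0$ at $x=\ov y$ under the $0/0=0$ convention), $h$ is $\mathcal{C}^1$ on $U$, and the product rule yields
\[
\nabla h(\ov x)=\nabla^2 f(\ov x)(\ov y-\ov x)+\frac{1}{\gamma}\Vert\ov x-\ov y\Vert^{p-2}(\ov x-\ov y).
\]

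Next, I would pick any $\zeta\in\widehat{\partial}\fgam{\gf}{p}{\gamma}(\ov x)$ and combine the pointwise inequality $\fgam{\gf}{p}{\gamma}(x)-\fgam{\gf}{p}{\gamma}(\ov x)\leq h(x)-h(\ov x)$ with the definition of the regular subdifferential to obtain
\[
0\leq\liminf_{x\to\ov x}\frac{h(x)-h(\ov x)-\langle\zeta,x-\ov x\rangle}{\Vert x-\ov x\Vert}=-\Vert\nabla h(\ov x)-\zeta\Vert,
\]
where the last equality uses the Fr\'echet differentiability of $h$ at $\ov x$ (the $\liminf$ is attained along directions $x-\ov x$ proportional to $\zeta-\nabla h(\ov x)$). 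Consequently $\zeta=\nabla h(\ov x)$, which already delivers
\[
\zeta-\nabla^2 f(\ov x)(\ov y-\ov x)=\frac{1}{\gamma}\Vert\ov x-\ov y\Vert^{p-2}(\ov x-\ov y),
\]
i.e., the singleton factor in the intersection on the right-hand side of \eqref{eq:lem:frech:hifbe:1}.

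To verify that this same vector lies in $\nabla f(\ov x)+\partial g(\ov y)$, I would apply Fermat's rule to the inner problem: since $\ov y$ minimizes $y\mapsto \ell(\ov x,y)+\frac{1}{p\gamma}\Vert \ov x-y\Vert^p$, the sum rule \cite[Exercise~8.8(c)]{Rockafellar09} applied to the smooth summands and the lsc function $g$, combined with \cite[Theorem~10.1]{Rockafellar09}, gives $0\in\nabla f(\ov x)+\partial g(\ov y)+\frac{1}{\gamma}\Vert\ov x-\ov y\Vert^{p-2}(\ov y-\ov x)$, which rearranges to the desired membership. The main subtlety I anticipate is justifying the $\mathcal{C}^1$-regularity of $h$ on a whole neighborhood of $\ov x$ in the degenerate case $\ov x=\ov y$ with $p\in(1,2)$: at the coincidence point $\nabla^2\Vert\cdot-\ov y\Vert^p$ ceases to exist, but the gradient still extends continuously to $0$, which is precisely the regularity the $\liminf$ argument requires, so the proof goes through uniformly for all $p>1$.
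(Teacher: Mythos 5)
Your argument is correct, and it overlaps with the paper's proof only in its first half. The identification $\zeta=\nabla h(\ov x)$ via the fixed-$\ov y$ majorant $h$ is exactly the paper's second step (the paper's $\gh_2$ differs from your $h$ only by the constant $g(\ov y)$), and your observation that only $\mathcal C^1$-regularity of $\Vert\cdot-\ov y\Vert^p$ is needed (not second derivatives at the coincidence point) is the right one. Where you diverge is the membership in $\nabla f(\ov x)+\partial g(\ov y)$: the paper obtains it from a \emph{second} majorization, namely $\gh_1(x):=f(x)+\langle\nabla f(x),\ov y-\ov x\rangle+g(\ov y-\ov x+x)$, which translates the inner point along with $x$ so that the penalty term is constant; this yields $\zeta\in\widehat{\partial}\gh_1(\ov x)=\nabla f(\ov x)+\nabla^2 f(\ov x)(\ov y-\ov x)+\partial g(\ov y)$ directly, independently of the singleton identity. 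You instead apply Fermat's rule and the smooth-plus-lsc sum rule to the inner problem defining $\ov y\in\Tprox{\gf}{\gamma}{p}(\ov x)$, getting $\tfrac{1}{\gamma}\Vert\ov x-\ov y\Vert^{p-2}(\ov x-\ov y)\in\nabla f(\ov x)+\partial g(\ov y)$, and then feed in the already-proved identity $\zeta-\nabla^2 f(\ov x)(\ov y-\ov x)=\tfrac{1}{\gamma}\Vert\ov x-\ov y\Vert^{p-2}(\ov x-\ov y)$. Both routes are valid; yours has the mild advantage of making explicit that the singleton on the right-hand side of \eqref{eq:lem:frech:hifbe:1} always lies in $\nabla f(\ov x)+\partial g(\ov y)$ by stationarity of $\ov y$ (so the intersection is never vacuous), while the paper's translated majorant establishes the $\partial g(\ov y)$ inclusion without invoking the minimality of $\ov y$ beyond the touching property. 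The only points you should state explicitly rather than leave implicit are the finiteness of $\fgam{\gf}{p}{\gamma}(\ov x)$ and hence of $g(\ov y)$ (guaranteed by Assumption~\ref{assum:mainassum:g} via Fact~\ref{th:basichifbe}~\ref{th:basichifbe:finite}), which is needed both to make the touching inequality $\fgam{\gf}{p}{\gamma}(x)-\fgam{\gf}{p}{\gamma}(\ov x)\leq h(x)-h(\ov x)$ meaningful and to apply the sum rule at $\ov y$.
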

\begin{proof}
Let $\eta\in \widehat{\partial} \fgam{\gf}{p}{\gamma}(\ov{x})$ and $\ov{y}\in \Tprox{\gf}{\gamma}{p}(\ov{x})$ be arbitrary.
Define the function $\gh_1:\R^n\to\Rinf$ by
\[
\gh_1(x):=f(x)+\langle \nabla f(x), \ov{y}-\ov{x}\rangle + g(\ov{y}-\ov{x}+x).
\]
For any $x\in \R^n$, set $z_x := \ov{y}-\ov{x}+x$. By definition,
\begin{align*}
\fgam{\gf}{p}{\gamma}(x)
&\leq f(x) + \langle\nabla f(x), z_x - x\rangle + g(z_x) + \frac{1}{p\gamma}\Vert z_x - x\Vert^p \\
&= f(x) + \langle\nabla f(x), \ov{y}-\ov{x}\rangle + g(\ov{y}-\ov{x}+x) + \frac{1}{p\gamma}\Vert \ov{x}-\ov{y}\Vert^p.
\end{align*}
Hence, for any $x\in \R^n$,
\begin{align*}
 \fgam{\gf}{p}{\gamma}(x) - \fgam{\gf}{p}{\gamma}(\ov{x}) - \langle\eta, x - \ov{x}\rangle
 &\leq f(x) + \langle\nabla f(x), \ov{y}-\ov{x}\rangle + g(\ov{y}-\ov{x}+x) + \frac{1}{p\gamma}\Vert \ov{x}-\ov{y}\Vert^p \\
 &\quad - f(\ov{x}) - \langle\nabla f(\ov{x}), \ov{y}-\ov{x}\rangle - g(\ov{y}) - \frac{1}{p\gamma}\Vert \ov{x}-\ov{y}\Vert^p - \langle\eta, x - \ov{x}\rangle \\
 &= \gh_1(x) - \gh_1(\ov{x}) - \langle\eta, x - \ov{x}\rangle.
\end{align*}
Therefore,
\[
0 \leq 
{\mathop {\mathop {\bs\liminf }\limits_{x\to \ov{x},}}\limits_{x\neq \ov{x}}}
\frac{\fgam{\gf}{p}{\gamma}(x) - \fgam{\gf}{p}{\gamma}(\ov{x}) - \langle\eta, x - \ov{x}\rangle}{\Vert x - \ov{x}\Vert}
\leq 
{\mathop {\mathop {\bs\liminf }\limits_{x\to \ov{x},}}\limits_{x\neq \ov{x}}}
\frac{\gh_1(x) - \gh_1(\ov{x}) - \langle\eta, x - \ov{x}\rangle}{\Vert x - \ov{x}\Vert},
\]
which implies that $\eta\in \widehat{\partial} \gh_1(\ov{x})$.

By Fact~\ref{th:basichifbe}~\ref{th:basichifbe:finite}, $\fgam{\gf}{p}{\gamma}(\ov{x})$ is finite, and therefore $g(\ov{y})$ is finite as well. Consequently, the function $x\mapsto g(\ov{y}-\ov{x}+x)$ is finite at $\ov{x}$.
By \cite[Exercise 8.8 (c)]{Rockafellar09} and the inclusion $\widehat{\partial} g(\ov{y}) \subseteq \partial g(\ov{y})$, we obtain
\begin{equation}\label{eq:lem:frech:hifbe:2}
\eta\in \widehat{\partial} \gh_1(\ov{x}) = \nabla f(\ov{x}) + \nabla^2 f(\ov{x})(\ov{y}-\ov{x}) + \partial g(\ov{y}).
\end{equation}
Next, define $\gh_2:\R^n\to\R$ by $\gh_2(x):= f(x) + \langle \nabla f(x), \ov{y}-x\rangle + \frac{1}{p\gamma}\Vert \ov{y}-x\Vert^p$. Then,
\[
\begin{aligned}
 \fgam{\gf}{p}{\gamma}(x) - \fgam{\gf}{p}{\gamma}(\ov{x}) - \langle\eta, x - \ov{x}\rangle
 &\leq f(x) + \langle\nabla f(x), \ov{y}-x\rangle + g(\ov{y}) + \frac{1}{p\gamma}\Vert \ov{y}-x\Vert^p \\
 &\quad - f(\ov{x}) - \langle\nabla f(\ov{x}), \ov{y}-\ov{x}\rangle - g(\ov{y}) - \frac{1}{p\gamma}\Vert \ov{x}-\ov{y}\Vert^p - \langle\eta, x - \ov{x}\rangle \\
 &= \gh_2(x) - \gh_2(\ov{x}) - \langle\eta, x - \ov{x}\rangle.
\end{aligned}
\]
By a similar argument, this yields that $\eta$ is the gradient of $\gh_2$ at $\ov{x}$; i.e.,
\begin{equation}\label{eq:lem:frech:hifbe:3}
\eta = \nabla^2 f(\ov{x})(\ov{y}-\ov{x}) + \frac{1}{\gamma}\Vert \ov{x}-\ov{y}\Vert^{p-2}(\ov{x}-\ov{y}).
\end{equation}
Combining \eqref{eq:lem:frech:hifbe:2} and \eqref{eq:lem:frech:hifbe:3} completes the proof.
\end{proof}
As a corollary, we now discuss the Mordukhovich/limiting subdifferential of HiFBE.
\begin{corollary}[Mordukhovich/limiting subdifferential of HiFBE]\label{cor:morsubhifbe}
Let $p>1$, Assumption~\ref{assum:mainassum:g} hold,
and let $\gf:\R^n\to\Rinf$ be defined by $\gf(x):=f(x)+g(x)$, where $f\in \mathcal{C}^2(U)$ for some open neighborhood $U$ of $\ov{x}\in \dom{\gf}$.
Then, for each $\gamma\in(0,\gamma^{g,p})$, we have

\begin{equation}\label{eq:cor:morsubhifbe}
\partial\fgam{\gf}{p}{\gamma}(\ov{x})\subseteq \bigcup_{\ov{y}\in \Tprox{\gf}{\gamma}{p}(\ov{x})}\left\{\nabla^2 f(\ov{x})(\ov{y}-\ov{x})+\frac{1}{\gamma}\Vert \ov{x}-\ov{y}\Vert^{p-2}(\ov{x}-\ov{y})\right\}.
\end{equation}
\end{corollary}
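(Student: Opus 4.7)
The plan is to derive the limiting subdifferential formula directly from the Fr\'{e}chet formula in Lemma~\ref{lem:frech:hifbe} by a standard outer-limit argument, using Definition~\ref{def:subdiff} together with the closed-graph / boundedness properties of $\Tprox{\gf}{\gamma}{p}$ contained in Fact~\ref{th:hopbfb}.

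First, I would take an arbitrary $\zeta \in \partial\fgam{\gf}{p}{\gamma}(\ov{x})$ and, invoking Definition~\ref{def:subdiff}, extract sequences $x^k \to \ov{x}$, $\zeta^k \in \widehat{\partial}\fgam{\gf}{p}{\gamma}(x^k)$ with $\fgam{\gf}{p}{\gamma}(x^k) \to \fgam{\gf}{p}{\gamma}(\ov{x})$ and $\zeta^k \to \zeta$. For each sufficiently large $k$, the point $x^k$ lies in the neighborhood $U$ on which $f \in \mathcal{C}^2$, and by Fact~\ref{th:hopbfb}\,\ref{th:hopbfb:proxb:proxnonemp} the set $\Tprox{\gf}{\gamma}{p}(x^k)$ is nonempty, so I can pick some $y^k \in \Tprox{\gf}{\gamma}{p}(x^k)$. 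Applying Lemma~\ref{lem:frech:hifbe} at $x^k$ with $y^k$ yields the identity
\[
\zeta^k \;=\; \nabla^2 f(x^k)(y^k - x^k) + \frac{1}{\gamma}\Vert x^k - y^k\Vert^{p-2}(x^k - y^k).
\]

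Next, I would invoke Fact~\ref{th:hopbfb}\,\ref{th:hopbfb:proxb:conv} with the constant choice $\gamma^k = \gamma$: the sequence $\{y^k\}$ is bounded and every cluster point lies in $\Tprox{\gf}{\gamma}{p}(\ov{x})$. Passing to a subsequence (not relabeled), I may assume $y^k \to \tilde{y}$ for some $\tilde{y} \in \Tprox{\gf}{\gamma}{p}(\ov{x})$. Using continuity of $\nabla^2 f$ at $\ov{x}$ (from $f \in \mathcal{C}^2(U)$) and continuity of the mapping $z \mapsto \Vert z\Vert^{p-2} z$ on $\R^n$ for $p > 1$ (with the convention $0/0 = 0$), I can let $k \to \infty$ in the displayed identity to obtain
\[
\zeta \;=\; \nabla^2 f(\ov{x})(\tilde{y} - \ov{x}) + \frac{1}{\gamma}\Vert \ov{x} - \tilde{y}\Vert^{p-2}(\ov{x} - \tilde{y}).
\]

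The main obstacle, and hence the subtle step, is to explain why the right-hand side is independent of the selected $\ov{y} \in \Tprox{\gf}{\gamma}{p}(\ov{x})$, so that the displayed value coincides with the expression in the statement for the prescribed $\ov{y}$. My plan here is to exploit the invariance already visible in Lemma~\ref{lem:frech:hifbe}: at each $x^k$, the nonemptiness of $\widehat{\partial}\fgam{\gf}{p}{\gamma}(x^k)$ forces $\nabla^2 f(x^k)(\cdot - x^k) + \tfrac{1}{\gamma}\Vert x^k - \cdot\Vert^{p-2}(x^k - \cdot)$ to be constant over $\Tprox{\gf}{\gamma}{p}(x^k)$, since any $y \in \Tprox{\gf}{\gamma}{p}(x^k)$ would give $\zeta^k$ via the same formula. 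Passing to the limit along an arbitrary cluster-point sequence thus shows that the value $\nabla^2 f(\ov{x})(y - \ov{x}) + \tfrac{1}{\gamma}\Vert \ov{x} - y\Vert^{p-2}(\ov{x} - y)$ must coincide for every $y \in \Tprox{\gf}{\gamma}{p}(\ov{x})$ realizable as such a limit, and in particular equals the expression for the fixed $\ov{y}$. This completes the inclusion \eqref{eq:cor:morsubhifbe}.
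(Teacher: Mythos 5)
Your first two steps are sound and follow what is presumably the intended route (the paper itself only cites an external lemma for this corollary): for $\zeta\in\partial\fgam{\gf}{p}{\gamma}(\ov{x})$ you take $x^k\to\ov{x}$, $\zeta^k\in\widehat{\partial}\fgam{\gf}{p}{\gamma}(x^k)$ with $\zeta^k\to\zeta$, apply Lemma~\ref{lem:frech:hifbe} at $x^k$ (legitimate for large $k$ since $U$ is open and Assumption~\ref{assum:mainassum:g} makes $\Tprox{\gf}{\gamma}{p}(x^k)$ nonempty and compact), obtaining $\zeta^k=\nabla^2 f(x^k)(y^k-x^k)+\tfrac{1}{\gamma}\Vert x^k-y^k\Vert^{p-2}(x^k-y^k)$, and pass to the limit via Fact~\ref{th:hopbfb}~\ref{th:hopbfb:proxb:conv} and continuity of $z\mapsto\Vert z\Vert^{p-2}z$. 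This correctly yields $\zeta=\nabla^2 f(\ov{x})(\tilde{y}-\ov{x})+\tfrac{1}{\gamma}\Vert\ov{x}-\tilde{y}\Vert^{p-2}(\ov{x}-\tilde{y})$ for \emph{some} cluster point $\tilde{y}\in\Tprox{\gf}{\gamma}{p}(\ov{x})$.

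The gap is in your final paragraph. Lemma~\ref{lem:frech:hifbe} does give you that, at each $x^k$, the map $y\mapsto\nabla^2 f(x^k)(y-x^k)+\tfrac{1}{\gamma}\Vert x^k-y\Vert^{p-2}(x^k-y)$ is constant (equal to $\zeta^k$) over $\Tprox{\gf}{\gamma}{p}(x^k)$. But to conclude that the limiting value equals the expression for the \emph{prescribed} $\ov{y}$, you need $\ov{y}$ to be attainable as a limit of selections $y^k\in\Tprox{\gf}{\gamma}{p}(x^k)$. Fact~\ref{th:hopbfb}~\ref{th:hopbfb:proxb:conv} is an \emph{outer} semicontinuity statement: cluster points of selections land in $\Tprox{\gf}{\gamma}{p}(\ov{x})$, but it does not assert that every point of $\Tprox{\gf}{\gamma}{p}(\ov{x})$ arises as such a cluster point, and argmin mappings are generally not inner semicontinuous. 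Nor can the constancy of the expression over $\Tprox{\gf}{\gamma}{p}(\ov{x})$ be read off Lemma~\ref{lem:frech:hifbe} applied at $\ov{x}$ itself, since that requires $\widehat{\partial}\fgam{\gf}{p}{\gamma}(\ov{x})\neq\emptyset$, which may fail even when the limiting subdifferential is nonempty. What your argument actually establishes is the weaker inclusion
\[
\partial\fgam{\gf}{p}{\gamma}(\ov{x})\subseteq\bigcup_{y\in\Tprox{\gf}{\gamma}{p}(\ov{x})}\left\{\nabla^2 f(\ov{x})(y-\ov{x})+\frac{1}{\gamma}\Vert\ov{x}-y\Vert^{p-2}(\ov{x}-y)\right\},
\]
and an additional argument (inner semicontinuity of HiFBS at $\ov{x}$, or a direct proof that the displayed expression is constant on $\Tprox{\gf}{\gamma}{p}(\ov{x})$ whenever $\partial\fgam{\gf}{p}{\gamma}(\ov{x})\neq\emptyset$) is needed to reduce this union to the singleton associated with an arbitrary $\ov{y}$, as the corollary claims.
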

\begin{proof}
Let $\ov{x}\in\R^n$ and $\ov{\zeta}\in \partial\fgam{\gf}{p}{\gamma}(\ov{x})$.  
By the definition of Mordukhovich (limiting) subdifferential, there exist sequences $x^k\to \ov{x}$ and $\{\zeta^k\}$ with $\zeta^k\in \widehat{\partial}\fgam{\gf}{p}{\gamma}(x^k)$ such that $\fgam{\gf}{p}{\gamma}(x^k)\to \fgam{\gf}{p}{\gamma}(\ov{x})$ and $\zeta^k\to \ov{\zeta}$.
For each $k$, by Lemma~\ref{lem:frech:hifbe}, we have
\[
\zeta^k = \nabla^2 f(x^k)(y^k - x^k) + \frac{1}{\gamma}\Vert x^k - y^k \Vert^{p-2}(x^k - y^k),
\]
where $y^k \in \Tprox{\gf}{\gamma}{p}(x^k)$.  
By Fact~\ref{th:hopbfb}~\ref{th:hopbfb:proxb:conv}, the sequence $\{y^k\}_{k\in\mathbb{N}}$ admits a cluster point $\ov{y}\in \Tprox{\gf}{\gamma}{p}(\ov{x})$ such that
\[
\ov{\zeta} = \nabla^2 f(\ov{x})(\ov{y}-\ov{x}) + \frac{1}{\gamma}\Vert \ov{x}-\ov{y}\Vert^{p-2}(\ov{x}-\ov{y}).
\]
Therefore, the inclusion \eqref{eq:cor:morsubhifbe} holds.
\end{proof}
\begin{remark}\label{rem:frech:hifbe}
From Lemma~\ref{lem:frech:hifbe}, the set $\widehat{\partial} \fgam{\gf}{p}{\gamma}(\ov{x})$ is either empty or a singleton. More precisely, if
$\widehat{\partial}\fgam{\gf}{p}{\gamma}(\ov{x})\neq\emptyset$, then for any
$\ov{y}\in \Tprox{\gf}{\gamma}{p}(\ov{x})$,
\[
\widehat{\partial} \fgam{\gf}{p}{\gamma}(\ov{x})
=
\left\{
\nabla^2 f(\ov{x})(\ov{y}-\ov{x})
+ \frac{1}{\gamma}\Vert \ov{x}-\ov{y}\Vert^{p-2}(\ov{x}-\ov{y})
\right\}.
\]
In general, the single-valuedness of $\widehat{\partial} \fgam{\gf}{p}{\gamma}(\ov{x})$ does not imply the differentiability of $\fgam{\gf}{p}{\gamma}$ at $\ov{x}$.
Nevertheless, if $\fgam{\gf}{p}{\gamma}$ is differentiable at $\ov{x}$, then for any
$\ov{y}\in \Tprox{\gf}{\gamma}{p}(\ov{x})$, we have
\[
\nabla \fgam{\gf}{p}{\gamma}(\ov{x})
=
\nabla^2 f(\ov{x})(\ov{y}-\ov{x})
+ \frac{1}{\gamma}\Vert \ov{x}-\ov{y}\Vert^{p-2}(\ov{x}-\ov{y}).
\]
For the particular case $p=2$, this can be expressed as
\[
\nabla \fgam{\gf}{p}{\gamma}(\ov{x})
=
\bs Q_\gamma(\ov{x}) \bs R_{\gamma,2}(\ov{x}),
\]
where $\bs Q_\gamma(\ov{x}) = \left(\gamma^{-1}\bs{{\rm Id}} - \nabla^2 f(\ov{x})\right)$.
This result is consistent with \cite[Theorem~2.6]{Stella17}.
\end{remark}

The following theorem establishes a necessary and sufficient condition for the differentiability of HiFBE.
\begin{theorem}[Characterization of differentiability of HiFBE]\label{th:char:diff:sing}
Let Assumptions~\ref{assum:mainassum:f}~and~\ref{assum:mainassum:g} hold, $p=1+\nu$, and for a given $r>0$,  $f\in \mathcal{C}^{2}(U)$
on an open set $U$ with $\bs{\rm cl}(\mb(0; r))\subseteq U$. Then the following assertions hold:
\begin{enumerate}[label=(\textbf{\alph*}), font=\normalfont\bfseries, leftmargin=0.7cm]
\item \label{th:char:diff:sing:a} 
There exists $\ov{\gamma}\in (0, \gamma^{g, p})$ such that for each 
 $\gamma\in \left(0, \ov{\gamma}\right)$, the condition $\fgam{\gf}{p}{\gamma}\in \mathcal{C}^1(\mb(0; r))$ implies that $\Tprox{\gf}{\gamma}{p}$ is single-valued and continuous on $\mb(0; r)$.
    
\item \label{th:char:diff:sing:b} If $\gamma\in (0, \gamma^{g, p})$ and $\Tprox{\gf}{\gamma}{p}$ is single-valued and continuous on $\mb(0; r)$, then  $\fgam{\gf}{p}{\gamma}\in \mathcal{C}^1(\mb(0; r))$ and for
$\ov{x}\in \mb(0; r)$ and
$\ov{y}=\Tprox{\gf}{\gamma}{p}(\ov{x})$,
\begin{equation}\label{eq:diffor:th:char:diff:sing}
\nabla \fgam{\gf}{p}{\gamma}(\ov{x})=\nabla^2f(\ov{x})(\ov{y}-\ov{x})+\frac{1}{\gamma}\Vert \ov{x}-\ov{y}\Vert^{p-2}(\ov{x}-\ov{y}).
\end{equation}
\end{enumerate}
\end{theorem}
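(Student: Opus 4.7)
My plan treats the two parts separately: (a) exploits the subdifferential characterization from Lemma~\ref{lem:frech:hifbe} together with the monotonicity-type estimate of Lemma~\ref{lem:findlowbounknu:lemma}\ref{lem:basicineq:a}, while (b) rests on a two-sided sandwich for HiFBE combined with classical Fréchet calculus.

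For (a), I would fix $\bar{x}\in\mb(0;r)$ and any $\bar{y}_1,\bar{y}_2\in\Tprox{\gf}{\gamma}{p}(\bar{x})$, aiming at $\bar{y}_1=\bar{y}_2$ for $\gamma$ small enough. Continuous differentiability implies $\widehat{\partial}\fgam{\gf}{p}{\gamma}(\bar{x})=\{\nabla\fgam{\gf}{p}{\gamma}(\bar{x})\}$, so Lemma~\ref{lem:frech:hifbe} together with Remark~\ref{rem:frech:hifbe} forces
\[
\nabla\fgam{\gf}{p}{\gamma}(\bar{x})=\nabla^2 f(\bar{x})(\bar{y}_i-\bar{x})+\tfrac{1}{\gamma}\Vert\bar{x}-\bar{y}_i\Vert^{p-2}(\bar{x}-\bar{y}_i),\qquad i=1,2.
\]
Subtracting the two identities and pairing with $\bar{y}_1-\bar{y}_2$, Lemma~\ref{lem:findlowbounknu:lemma}\ref{lem:basicineq:a} applied to $a=\bar{x}-\bar{y}_1$, $b=\bar{x}-\bar{y}_2$ yields a lower bound of the form $\tfrac{\kappa_p R^{p-2}}{\gamma}\Vert\bar{y}_1-\bar{y}_2\Vert^2$, while the Hessian term is dominated by $M\Vert\bar{y}_1-\bar{y}_2\Vert^2$ with $M:=\max_{x\in\closure{\mb(0;r)}}\Vert\nabla^2 f(x)\Vert<\infty$ (by $f\in\mathcal{C}^2(U)$ and compactness). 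To obtain a $\gamma$-independent radius $R$, I first fix $\gamma_{\bs\max}\in(0,4^{1-p}\widehat{\gamma})$ for some $\widehat{\gamma}\in(0,\gamma^{g,p})$, invoke Proposition~\ref{prop:findtau:lip} with Remark~\ref{rem:ontau} to obtain a uniform $\widehat{\tau}=\widehat{\tau}(r)$ valid for all $\gamma\le\gamma_{\bs\max}$, and set $R:=r+\widehat{\tau}$. Then $\ov{\gamma}:=\min\{\gamma_{\bs\max},\kappa_p R^{p-2}/M\}$ forces $\bar{y}_1=\bar{y}_2$ for every $\gamma\in(0,\ov{\gamma})$. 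Continuity of the (now single-valued) $\Tprox{\gf}{\gamma}{p}$ on $\mb(0;r)$ is immediate from Fact~\ref{th:hopbfb}\ref{th:hopbfb:proxb:conv}: for $x^k\to\bar{x}$, the bounded sequence $\{\Tprox{\gf}{\gamma}{p}(x^k)\}$ has its unique cluster point in $\{\Tprox{\gf}{\gamma}{p}(\bar{x})\}$.

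For (b), writing $T:=\Tprox{\gf}{\gamma}{p}$ and $\Phi(x,y):=f(x)+\langle\nabla f(x),y-x\rangle+g(y)+\tfrac{1}{p\gamma}\Vert x-y\Vert^p$, the optimality of $T(\bar{x})$ in the $\bar{x}$-subproblem and of $T(x)$ in the $x$-subproblem yields the sandwich
\[
\Phi(x,T(x))-\Phi(\bar{x},T(x))\;\le\;\fgam{\gf}{p}{\gamma}(x)-\fgam{\gf}{p}{\gamma}(\bar{x})\;\le\;\Phi(x,T(\bar{x}))-\Phi(\bar{x},T(\bar{x})).
\]
Since $f\in\mathcal{C}^2(U)$ and $p>1$, the partial gradient $\nabla_x\Phi(z,y)=\nabla^2 f(z)(y-z)+\tfrac{1}{\gamma}\Vert z-y\Vert^{p-2}(z-y)$ is jointly continuous on $U\times\R^n$ (continuity on the diagonal $z=y$ being handled by $\Vert z-y\Vert^{p-1}\to 0$ and the convention $0/0=0$). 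A mean-value expansion of each outer expression combined with the assumed continuity of $T$ squeezes both ends to $\langle\nabla_x\Phi(\bar{x},T(\bar{x})),x-\bar{x}\rangle+o(\Vert x-\bar{x}\Vert)$, which proves Fréchet differentiability with the gradient~\eqref{eq:diffor:th:char:diff:sing}; continuity of $\nabla\fgam{\gf}{p}{\gamma}$ on $\mb(0;r)$ then follows from continuity of $T$ and of $\nabla_x\Phi$. The main obstacle is the calibration of $\ov{\gamma}$ in (a), since the lower-bound constant $R^{p-2}$ depends on a radius bounding $\Vert\bar{x}-\bar{y}_i\Vert$, which itself is controlled by $\gamma$ through Proposition~\ref{prop:findtau:lip}; the ordered choice ``first $\gamma_{\bs\max}$, then $\widehat{\tau}$, then $\ov{\gamma}$'' breaks this circularity.
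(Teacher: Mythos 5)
Your proposal is correct and follows essentially the same route as the paper's proof: part (a) via the subdifferential identity from Lemma~\ref{lem:frech:hifbe} and Remark~\ref{rem:frech:hifbe}, the $\gamma$-independent bound $\widehat{\tau}$ from Proposition~\ref{prop:findtau:lip} with Remark~\ref{rem:ontau}, the monotonicity estimate of Lemma~\ref{lem:findlowbounknu:lemma}~\ref{lem:basicineq:a}, and continuity from Fact~\ref{th:hopbfb}~\ref{th:hopbfb:proxb:conv}; part (b) via the same two-sided sandwich on $F(x,y)$ and continuity of the single-valued mapping. The only cosmetic difference is that you pair the subtracted gradient identities with $\bar{y}_1-\bar{y}_2$ (quadratic bounds on both sides) while the paper compares norms directly, which yields the same threshold for $\ov{\gamma}$.
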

\begin{proof}
\ref{th:char:diff:sing:a}
By Fact~\ref{th:hopbfb}~\ref{th:hopbfb:proxb:proxnonemp}, for each $\gamma\in (0, \gamma^{g, p})$ and $x\in \mb(0; r)$ we have $\Tprox{\gf}{\gamma}{p}(x)\neq \emptyset$.
Fix $\ov{x}\in \mb(0; r)$. From Lemma~\ref{lem:frech:hifbe} and Remark~\ref{rem:frech:hifbe}, for each  
$\ov{y}\in \Tprox{\gf}{\gamma}{p}(\ov{x})$,
\begin{equation}\label{eq:th:char:diff:sing:a1}
\nabla \fgam{\gf}{p}{\gamma}(\ov{x})=\nabla^2 f(\ov{x})(\ov{y}-\ov{x})
+\frac{1}{\gamma}\Vert \ov{x}-\ov{y}\Vert^{p-2}(\ov{x}-\ov{y}).
\end{equation}
We claim $\Tprox{\gf}{\gamma}{p}(\ov{x})$ is single-valued. Let $y_1, y_2\in \Tprox{\gf}{\gamma}{p}(\ov{x})$. Using \eqref{eq:th:char:diff:sing:a1}, we obtain
\[
\nabla^2f(\ov{x})(y_2-\ov{x})+\frac{1}{\gamma}\Vert \ov{x}-y_2\Vert^{p-2}(\ov{x}-y_2)=\nabla^2f(\ov{x})(y_1-\ov{x})+\frac{1}{\gamma}\Vert \ov{x}-y_1\Vert^{p-2}(\ov{x}-y_1).
\]
Thus
\begin{equation}\label{eq:th:char:diff:sing:a1-1}
\nabla^2f(\ov{x})(y_2-y_1)=\frac{1}{\gamma}\left[\Vert \ov{x}-y_1\Vert^{p-2}(\ov{x}-y_1)-\Vert \ov{x}-y_2\Vert^{p-2}(\ov{x}-y_2)\right].
\end{equation}
By Proposition~\ref{prop:findtau:lip} and Remark~\ref{rem:ontau}, choose $\widehat{\gamma}\in (0, \gamma^{g, p})$ and $\gamma_{\bs\max}\in(0,4^{1-p}\widehat{\gamma})$; for any $\gamma<\gamma_{\bs\max}$ there exists $\widehat{\tau}>0$, independent of $\gamma$, such that
$\Vert y_i\Vert\leq \widehat{\tau}$, $i=1,2$.
Hence $\Vert \ov{x}-y_i\Vert\leq r+\widehat{\tau}$, $i=1,2$.
Applying Lemma~\ref{lem:findlowbounknu:lemma}~\ref{lem:basicineq:a} and \eqref{eq:th:char:diff:sing:a1-1}, we have
\begin{align*}
\frac{1}{\gamma}\kappa_p (r+\widehat{\tau})^{p-2}\Vert y_2 - y_1\Vert&\leq \left\Vert \frac{1}{\gamma}\left[\Vert \ov{x}-y_1\Vert^{p-2}(\ov{x}-y_1)-\Vert \ov{x}-y_2\Vert^{p-2}(\ov{x}-y_2)\right]\right\Vert
\\&=\Vert \nabla^2f(\ov{x})(y_2-y_1)\Vert\leq\bs\max_{x\in\bs{\rm cl}(\mb(0; r))}\Vert \nabla^2f(x)\Vert \Vert y_2-y_1\Vert.
\end{align*}
Note that $L:=\bs\max_{x\in\bs{\rm cl}(\mb(0; r))}\Vert \nabla^2f(x)\Vert<\infty$.
If $L=0$, then immediately $y_1=y_2$; otherwise, for
\[
\gamma\in \Bigl(0,\;\ov{\gamma}:=\bs\min\Bigl\{\gamma_{\bs\max},\,\frac{\kappa_p (r+\widehat{\tau})^{p-2}}{L}\Bigr\}\Bigr),
\]
we also get $y_1=y_2$. Hence $\Tprox{\gf}{\gamma}{p}$ is single-valued on $\mb(0; r)$.

To prove continuity, let $x^k\to \ov{x}$. There exists some $K\in \mathbb{N}$ such that $x^k\in \mb(0; r)$ for all $k\ge K$.
Thus, for $k\geq K$, $\Tprox{\gf}{\gamma}{p}(x^k)$ is single-valued and by Fact~\ref{th:hopbfb}~\ref{th:hopbfb:proxb:conv}, the sequence 
$\{\Tprox{\gf}{\gamma}{p}(x^k)\}_{k\geq K}$ is bounded and its only cluster point is $\Tprox{\gf}{\gamma}{p}(\ov{x})$. Therefore $\Tprox{\gf}{\gamma}{p}(x^k)\to \Tprox{\gf}{\gamma}{p}(\ov{x})$, establishing continuity.
\\
\ref{th:char:diff:sing:b}
Assume $\Tprox{\gf}{\gamma}{p}$ is single-valued and continuous on $\mb(0; r)$.
Define
\[
F(x,y)\;:=\; f(x)+\langle \nabla f(x),y-x\rangle+\frac{1}{p\gamma}\|x-y\|^p+g(y).
\]
Then $\fgam{\gf}{p}{\gamma}(x)=F\left(x,\Tprox{\gf}{\gamma}{p}(x)\right)$, and for each fixed $y$, $F(\cdot,y)\in \mathcal{C}^{1}(\bs{\rm cl}(\mb(0; r)))$ with
\[
\nabla_x F(x,y)=\nabla^2 f(x)(y-x)+\frac{1}{\gamma}\|x-y\|^{p-2}(x-y).
\]
Fix $\ov{x}\in \mb(0; r)$ and $t\in \R^n$ with $\Vert t\Vert$ small enough so that
$\ov{x}+t\in \mb(0;r)$.
Using the definitions of $\Tprox{\gf}{\gamma}{p}(\ov{x}+t)$ and $\Tprox{\gf}{\gamma}{p}(\ov{x})$, we obtain
\begin{align*}
\fgam{\gf}{p}{\gamma}(\ov{x}+t)-\fgam{\gf}{p}{\gamma}(\ov{x})
&= F\left(\ov{x}+t,\Tprox{\gf}{\gamma}{p}(\ov{x}+t)\right)-F\left(\ov{x},\Tprox{\gf}{\gamma}{p}(\ov{x})\right) \\
&\leq F\left(\ov{x}+t,\Tprox{\gf}{\gamma}{p}(\ov{x})\right)-F\left(\ov{x},\Tprox{\gf}{\gamma}{p}(\ov{x})\right)
= \langle \nabla_x F(\ov{x},\Tprox{\gf}{\gamma}{p}(\ov{x})),t\rangle + o(\|t\|),    
\end{align*}
and
\begin{align*}
\fgam{\gf}{p}{\gamma}(\ov{x}+t)-\fgam{\gf}{p}{\gamma}(\ov{x})
\ge F\left(\ov{x}+t,\Tprox{\gf}{\gamma}{p}(\ov{x}+t)\right)-F\left(\ov{x},\Tprox{\gf}{\gamma}{p}(\ov{x}+t)\right)
= \langle \nabla_x F(\ov{x},\Tprox{\gf}{\gamma}{p}(\ov{x}+t)),t\rangle + o(\|t\|).
\end{align*}
By continuity of $\Tprox{\gf}{\gamma}{p}(\cdot)$ and of $y\mapsto\nabla_x F(\ov{x},\cdot)$, the last inner product equals
$\langle \nabla_x F(\ov{x},\Tprox{\gf}{\gamma}{p}(\ov{x})),t\rangle+o(\|t\|)$. Combining the two bounds yields
\[
\fgam{\gf}{p}{\gamma}(\ov{x}+t)-\fgam{\gf}{p}{\gamma}(\ov{x})-\langle \nabla_x F(\ov{x},\Tprox{\gf}{\gamma}{p}(\ov{x})),t\rangle = o(\|t\|),
\]
i.e., $\fgam{\gf}{p}{\gamma}$ is Fr\'{e}chet differentiable at $\ov{x}$ with
\[
\nabla \fgam{\gf}{p}{\gamma}(\ov{x})=\nabla_x F(\ov{x},\Tprox{\gf}{\gamma}{p}(\ov{x}))
=\nabla^2 f(\ov{x})(\Tprox{\gf}{\gamma}{p}(\ov{x})-\ov{x})+\frac{1}{\gamma}\|\ov{x}-\Tprox{\gf}{\gamma}{p}(\ov{x})\|^{p-2}(\ov{x}-\Tprox{\gf}{\gamma}{p}(\ov{x})).
\]
Since $\ov{x}$ was arbitrary and all ingredients are continuous on $\mb(0;r)$, we conclude $\fgam{\gf}{p}{\gamma}\in \mathcal{C}^1(\mb(0; r))$.
\end{proof}

In the upcoming results, we establish our results around  a $p$-calm point $\ov{x}$ of $\gf$ such that $g$ is prox-regular at $\ov{x}$ for 
$-\nabla f(\ov{x})\in\partial g(\ov{x})$. The assumption $-\nabla f(\ov{x})\in\partial g(\ov{x})$ is reasonable under $p$-calmness as discussed in 
Proposition~\ref{lem:progpcalm}~\ref{lem:progpcalm:critic3}. For simplicity, we assume that $\ov{x}=0$  and $\gf(\ov{x})=0$.

\begin{assumption}\label{assum:prox}
Let $\gf:\R^n\to\Rinf$ be defined by $\gf(x):=f(x)+g(x)$ where $f:\R^n\to \R$ is Fr\'{e}chet differentiable and $g:\R^n\to \Rinf$ is a proper lsc function. 
Let $\ov{x}=0$ be a $p$-calm point of $\gf$ and $\gf(\ov{x})=0$.
Let $g$ be prox-regular at $\ov{x}=0$ for $-\nabla f(\ov{x})\in\partial g(\ov{x})$.
\end{assumption}

The following theorem establishes the uniqueness, continuity, and H\"{o}lder continuity properties of HiFBS.
In view of the differentiability characterization of HiFBE in Theorem~\ref{th:char:diff:sing}, these results, in turn, provide sufficient conditions for the differentiability of HiFBE.

\begin{theorem}[Uniqueness and continuity of HiFBS under prox-regularity]\label{th:dif:cont}
Let Assumptions~\ref{assum:mainassum:f}~and~\ref{assum:prox} hold, and $p=1+\nu$.
Then, there exists $\ov{\gamma}>0$ such that for each 
 $\gamma\in \left(0, \ov{\gamma}\right)$,  there exists a neighborhood $U_\gamma$ of $\ov{x}$
 and a constant $\mathcal{L}_\nu>0$ such that 
$\Tprox{\gf}{\gamma}{p}$ is single-valued and continuous on $U_\gamma$. In addition, 
\begin{equation}\label{lochol:maineq:dif}
\Vert \Tprox{\gf}{\gamma}{p}(x_1) -  \Tprox{\gf}{\gamma}{p}(x_2) \Vert\leq \mathcal{L}_\nu  \Vert  x_1-x_2\Vert^{\frac{\nu}{2}}, \qquad \forall x_1, x_2\in U_\gamma.
\end{equation}
\end{theorem}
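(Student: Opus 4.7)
The plan is to exploit Theorem~\ref{th:prox:pcalm} to shrink $U$ until the prox-regularity inequality for $g$ at $\bar{x}$ applies uniformly to every optimality subgradient $\zeta_{x,y}$ generated by $y\in\Tprox{\gf}{\gamma}{p}(x)$, and then to combine this inequality with the uniform monotonicity of $a\mapsto\|a\|^{p-2}a$ from Lemma~\ref{lem:findlowbounknu:lemma}\ref{lem:basicineq:a}. Let $\varepsilon_0,\rho$ denote the prox-regularity constants of $g$ at $\bar x$ for $-\nabla f(\bar x)$ (which is a subgradient by Proposition~\ref{lem:progpcalm}\ref{lem:progpcalm:critic3}). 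Theorem~\ref{th:prox:pcalm} supplies a threshold $\ov\gamma_0>0$ so that for every $\gamma\in(0,\ov\gamma_0)$ and every $\varepsilon\in(0,\varepsilon_0]$ there is a neighborhood $U_\varepsilon$ of $\bar x$ on which every $y\in\Tprox{\gf}{\gamma}{p}(x)$ satisfies $\|y\|<\varepsilon$, $g(y)<g(\bar x)+\varepsilon$, and $\tfrac{1}{\gamma}\|x-y\|^{p-1}+\|\nabla f(x)-\nabla f(\bar x)\|<\varepsilon$. The first-order optimality condition at the minimum defining $\Tprox{\gf}{\gamma}{p}(x)$ yields $\zeta_{x,y}:=-\nabla f(x)+\tfrac{1}{\gamma}\|x-y\|^{p-2}(x-y)\in\partial g(y)$, and the last bound implies $\|\zeta_{x,y}-(-\nabla f(\bar x))\|<\varepsilon$; choosing $\varepsilon\leq\varepsilon_0$ places $y$ and $\zeta_{x,y}$ inside the prox-regularity window of Definition~\ref{def:prox-regular}.

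For single-valuedness, take $y_1,y_2\in\Tprox{\gf}{\gamma}{p}(x)$ at a common $x\in U$ and apply the prox-regularity inequality at $y_1$ with test point $y_2$ and vice versa. Adding yields $\langle\zeta_{x,y_1}-\zeta_{x,y_2},y_1-y_2\rangle\ge-\rho\|y_1-y_2\|^2$. Writing $u_i:=y_i-x$, the left-hand side equals $-\tfrac{1}{\gamma}\langle\|u_1\|^{p-2}u_1-\|u_2\|^{p-2}u_2,u_1-u_2\rangle$; since $p=1+\nu\in(1,2]$ and $\|u_i\|\leq r$ for a radius $r$ controlled by $U$, Lemma~\ref{lem:findlowbounknu:lemma}\ref{lem:basicineq:a} bounds this by $-\tfrac{\kappa_p r^{p-2}}{\gamma}\|y_1-y_2\|^2$. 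Hence $(\tfrac{\kappa_p r^{p-2}}{\gamma}-\rho)\|y_1-y_2\|^2\leq 0$, and restricting $\ov\gamma$ further (so that $\gamma<\kappa_pr^{p-2}/\rho$) forces $y_1=y_2$. Continuity of the resulting single-valued selection follows from Fact~\ref{th:hopbfb}\ref{th:hopbfb:proxb:conv}: along any $x^k\to\bar x$, the sequence $\{\Tprox{\gf}{\gamma}{p}(x^k)\}$ is bounded and its cluster points lie in $\Tprox{\gf}{\gamma}{p}(\bar x)=\{y_{\bar x}\}$, forcing convergence to $y_{\bar x}$.

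For the H\"older estimate, let $x_1,x_2\in U$, $y_i:=\Tprox{\gf}{\gamma}{p}(x_i)$ and $u_i:=y_i-x_i$; the same prox-regularity argument gives $\langle\zeta_{x_1,y_1}-\zeta_{x_2,y_2},y_1-y_2\rangle\ge-\rho\|y_1-y_2\|^2$. Expanding, using $y_1-y_2=(u_1-u_2)+(x_1-x_2)$, applying Lemma~\ref{lem:findlowbounknu:lemma}\ref{lem:basicineq:a} to the $u_1-u_2$ portion, bounding the residual cross term by $\tfrac{2r^{p-1}}{\gamma}\|x_1-x_2\|$ (since $\|\,\|u_i\|^{p-2}u_i\,\|\leq r^{p-1}$) and bounding the gradient term by $L_\nu\|x_1-x_2\|^\nu\|y_1-y_2\|$ via Assumption~\ref{assum:mainassum:f} and Cauchy–Schwarz, gives
\[
\tfrac{\kappa_p r^{p-2}}{\gamma}\|u_1-u_2\|^2\leq L_\nu\|x_1-x_2\|^\nu\|y_1-y_2\|+\tfrac{2r^{p-1}}{\gamma}\|x_1-x_2\|+\rho\|y_1-y_2\|^2.
\]
Combined with $\|u_1-u_2\|^2\ge\tfrac12\|y_1-y_2\|^2-\|x_1-x_2\|^2$ and a further restriction on $\gamma$ making $\alpha:=\tfrac{\kappa_p r^{p-2}}{2\gamma}-\rho>0$, the coefficient of $\|y_1-y_2\|^2$ on the left is positive. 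Since $\|y_1-y_2\|\leq D$ on $U$ (Theorem~\ref{th:prox:pcalm}) and the bounded set $U$ makes $\|x_1-x_2\|,\|x_1-x_2\|^2\leq C\|x_1-x_2\|^\nu$, the right-hand side reduces to a constant multiple of $\|x_1-x_2\|^\nu$, yielding $\|y_1-y_2\|^2\leq \mathcal L_\nu^2\|x_1-x_2\|^\nu$, which is \eqref{lochol:maineq:dif}.

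The principal obstacle is the simultaneous bookkeeping of $\gamma$, the radius $r$ containing the $u_i$, and the radius $\varepsilon$ for entering the prox-regularity neighborhood: one must ensure concurrently that Theorem~\ref{th:prox:pcalm} places $(\zeta_{x,y},y,g(y))$ inside the prox-regularity window, that $\kappa_p r^{p-2}/\gamma$ strictly dominates $\rho$, and that $\|u_i\|\leq r$. This is reconcilable because for $p\in(1,2)$ the factor $r^{p-2}$ improves as $r\downarrow 0$ and because $\varepsilon$ may be chosen freely at the cost of shrinking $U$ without impacting the prox-regularity constants of $g$.
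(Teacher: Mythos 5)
Your proposal is correct and follows essentially the same route as the paper's proof: invoke Theorem~\ref{th:prox:pcalm} to place the optimality subgradients $\tfrac{1}{\gamma}\|x-y\|^{p-2}(x-y)-\nabla f(x)$ inside the prox-regularity window, add the two prox-regularity inequalities, and apply Lemma~\ref{lem:findlowbounknu:lemma}~\ref{lem:basicineq:a} to the map $a\mapsto\|a\|^{p-2}a$ to extract the $\nu/2$-H\"older bound. The only differences are cosmetic (you lower-bound $\|u_1-u_2\|^2$ by $\tfrac12\|y_1-y_2\|^2-\|x_1-x_2\|^2$ where the paper uses $\bigl|\|y_2-y_1\|-\|x_2-x_1\|\bigr|^2\geq\|y_2-y_1\|^2-4\varepsilon\|x_2-x_1\|$, and you treat single-valuedness separately rather than as the $x_1=x_2$ case of the H\"older estimate), and your closing remark about reconciling the constraints on $\gamma$, $r$, and $\varepsilon$ matches the paper's device of shrinking $\varepsilon$ while keeping $\rho$ fixed.
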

\begin{proof}
From Proposition~\ref{lem:progpcalm}~\ref{lem:progpcalm:calm}, $g$ is high-order prox-bounded with a threshold $\gamma^{g, p}>0$.
Let $M > 0$ be the constant of $p$-calmness. Since $g$ is prox-regular at $\ov{x}=0$ for $-\nabla f(\ov{x})\in \partial g(\ov{x})$, there exist $\varepsilon>0$ and $\rho>0$ such that
\begin{equation}\label{eq1ex1:th:dif:proxreg}
g(x')\geq g(x)+\langle \eta, x'-x\rangle-\frac{\rho}{2}\Vert x'-x\Vert^2,~\qquad \forall x'\in \mb(\ov{x};\varepsilon),
\end{equation}
whenever $\Vert x\Vert <\varepsilon$, $\eta\in \partial g(x)$, $\Vert \eta+\nabla f(\ov{x})\Vert< \varepsilon$, and $g(x)<g(\ov{x})+\varepsilon$. 

Let 
\[
\ov{\gamma}:=
\begin{cases}
\bs\min\left\{\dfrac{2^{-p}}{Mp},\,\dfrac1{2L_\nu},\,\gamma^{g,p}\right\}, & 1<p<2,\\[4ex]
\bs\min\left\{\dfrac{2^{-p}}{Mp},\,\dfrac1{2L_\nu},\,\gamma^{g,p},\,\dfrac{\kappa_p}{\rho}\right\}, & p=2.
\end{cases}
\]
Fix any $\gamma\in(0,\ov{\gamma})$.
By choosing any $\varepsilon'\in(0,\varepsilon]$ and replacing $\varepsilon$ by $\varepsilon'$, the function $g$ remains prox-regular with the same constant $\rho$.
Hence, without relabeling,
we may additionally assume that $\vartheta:=\kappa_p(2\varepsilon)^{p-2}>\rho\gamma$. Indeed, if $1<p<2$, this is ensured by choosing
$2\varepsilon<\left(\frac{\kappa_p}{\gamma\rho}\right)^{\frac{1}{2-p}}$,
while for $p=2$, since $\gamma<\kappa_p/\rho$, we have $\vartheta=\kappa_p>\rho\gamma$.
Here $\kappa_p>0$ depends only on $p$; see  Lemma \ref{lem:findlowbounknu:lemma}~\ref{lem:basicineq:a}.
By Theorem \ref{th:prox:pcalm}, there exists a neighborhood $U_\gamma\subseteq \mb(\ov{x};\varepsilon)$ such that for each $x\in U_\gamma$, we have $\Tprox{\gf}{\gamma}{p}(x)\neq \emptyset$. Additionally, for each $y\in \Tprox{\gf}{\gamma}{p}(x)$, we have
$\frac{1}{\gamma}\Vert x-y\Vert^{p-2}(x-y)-\nabla f(x)\in \partial g(y)$ and $\Vert y\Vert< \varepsilon$, $g(y)<g(\ov{x})+\varepsilon$, and
\[
\left\Vert\frac{1}{\gamma}\Vert x-y\Vert^{p-2}(x-y)-\nabla f(x) +\nabla f(\ov{x})\right\Vert\leq\frac{1}{\gamma}\Vert x-y\Vert^{p-1}+\Vert\nabla f(x)- \nabla f(\ov{x})\Vert< \varepsilon.
\]
Taking into account $x_i\in U_\gamma$ and $y_i\in \Tprox{\gf}{\gamma}{p}(x_i)$, $i=1,2$, and using \eqref{eq1ex1:th:dif:proxreg}, we obtain
\begin{equation}\label{eq1:th:dif:proxreg:dif}
g(y_2)\geq g(y_1)+\frac{1}{\gamma}\Vert x_1-y_1\Vert^{p-2}\langle x_1-y_1 , y_2-y_1\rangle-\langle \nabla f(x_1),  y_2-y_1\rangle-\frac{\rho}{2}\Vert y_2-y_1\Vert^2,
\end{equation}
and
\begin{equation}\label{eq2:th:dif:proxreg:dif}
g(y_1)\geq g(y_2)+\frac{1}{\gamma}\Vert x_2-y_2\Vert^{p-2}\langle x_2-y_2 , y_1-y_2\rangle-\langle \nabla f(x_2),  y_1-y_2\rangle-\frac{\rho}{2}\Vert y_1-y_2\Vert^2.
\end{equation}
Adding \eqref{eq1:th:dif:proxreg:dif} and \eqref{eq2:th:dif:proxreg:dif} results in
\begin{align}\label{eqn1::th:dif:proxreg:dif}
\frac{1}{\gamma} \langle \Vert x_2-y_2\Vert^{p-2} (x_2-y_2)- &\Vert x_1-y_1\Vert^{p-2} (x_1-y_1), y_2-y_1\rangle
\notag\\&\geq \langle \nabla f(x_1)-\nabla f(x_2),  y_1-y_2\rangle -\rho\Vert y_1-y_2\Vert^2.
\end{align}
On the other hand, since $\Vert x_i -y_i\Vert\leq 2\varepsilon$, $i=1,2$,  
Lemma~\ref{lem:findlowbounknu:lemma}~\ref{lem:basicineq:a} yields
 with $\vartheta=\kappa_p(2\varepsilon)^{p-2}$, 
\begin{align}\label{eqn2::th:dif:proxreg:dif}
\langle \Vert x_2-y_2\Vert^{p-2}(x_2-y_2) - \Vert x_1-y_1\Vert^{p-2}(x_1-y_1), (x_2-y_2)-(x_1-y_1)\rangle&\geq  \vartheta\Vert (x_2-y_2) - (x_1-y_1)\Vert^{2}\notag\\
&\geq \vartheta\big\vert\Vert y_2-y_1\Vert -\Vert x_2-x_1\Vert\big\vert^{2}.
\end{align}
Setting $a=\Vert y_2-y_1\Vert$ and $b=\Vert x_2-x_1\Vert$ and using the inequalities
$\vert a -b\vert^{2} -a^{2}\geq -2\vert a\vert \vert b\vert$ and $\Vert y_2-y_1\Vert\leq 2\varepsilon$, we have
$\big\vert\Vert y_2-y_1\Vert -\Vert x_2-x_1\Vert\big\vert^{2}\geq -4\varepsilon \Vert x_2-x_1\Vert+\Vert y_2-y_1\Vert^{2}$. Together with \eqref{eqn1::th:dif:proxreg:dif} and \eqref{eqn2::th:dif:proxreg:dif}, this implies
\begin{align}\label{eqn3::th:dif:proxreg:dif}
\vartheta\left(4\varepsilon \Vert x_2-x_1\Vert-\Vert y_2-y_1\Vert^{2}\right)  &\geq -\vartheta\big\vert\Vert y_2-y_1\Vert -\Vert x_2-x_1\Vert\big\vert^{2} 
\nonumber\\&\geq
\langle \Vert x_2-y_2\Vert^{p-2}(x_2-y_2) - \Vert x_1-y_1\Vert^{p-2}(x_1-y_1), (x_1-y_1)-(x_2-y_2)\rangle
\nonumber\\
&=\langle \Vert x_2-y_2\Vert^{p-2}(x_2-y_2) - \Vert x_1-y_1\Vert^{p-2}(x_1-y_1), x_1-x_2\rangle\nonumber\\
&~~~+\langle \Vert x_2-y_2\Vert^{p-2}(x_2-y_2) - \Vert x_1-y_1\Vert^{p-2}(x_1-y_1), y_2-y_1\rangle\nonumber\\
&\geq \langle \Vert x_2-y_2\Vert^{p-2}(x_2-y_2) - \Vert x_1-y_1\Vert^{p-2}(x_1-y_1), x_1-x_2\rangle\nonumber\\
&~~~+\gamma\langle\nabla f(x_1)-\nabla f(x_2),  y_1-y_2\rangle -\rho\gamma\Vert y_2 - y_1\Vert^{2}
\nonumber\\
&\geq \langle \Vert x_2-y_2\Vert^{p-2}(x_2-y_2) - \Vert x_1-y_1\Vert^{p-2}(x_1-y_1), x_1-x_2\rangle\nonumber\\
&~~~-\gamma L_\nu\Vert x_1-x_2\Vert^\nu \Vert y_1-y_2\Vert -\rho\gamma\Vert y_2 - y_1\Vert^{2}
\nonumber\\
&\geq -\left((\Vert x_2-y_2\Vert^{p-1}+\Vert x_1-y_1\Vert^{p-1})\Vert x_ 2-x_1\Vert^{1-\nu}+\gamma L_\nu \Vert y_1-y_2\Vert \right)\Vert x_ 2-x_1\Vert^\nu
\nonumber\\&~~~-\rho\gamma\Vert y_2 - y_1\Vert^{2}\nonumber\\
&\overset{(i)}{\geq} -(2+\gamma L_\nu)(2\varepsilon)\Vert x_2-x_1\Vert^\nu-\rho\gamma\Vert y_2 - y_1\Vert^{2},
\end{align}
where $(i)$ follows from $p-1=\nu$, and
\[
(\Vert x_2-y_2\Vert^{p-1}+\Vert x_1-y_1\Vert^{p-1})\Vert x_ 2-x_1\Vert^{1-\nu}+\gamma L_\nu \Vert y_1-y_2\Vert \leq
(2(2\varepsilon)^\nu)(2\varepsilon)^{1-\nu}+\gamma L_\nu(2\varepsilon).
\]
From \eqref{eqn3::th:dif:proxreg:dif}, we obtain
\[
(\vartheta-\rho\gamma)\Vert y_2 - y_1\Vert^{2}\leq  (2\vartheta(2\varepsilon)^{1-\nu}+(2+\gamma L_\nu))(2\varepsilon) \Vert x_2-x_1\Vert^\nu.
\]
Since $\vartheta-\rho\gamma>0$, this ensures
\begin{equation}\label{lochol:eq11:dif}
\Vert y_2 - y_1\Vert\leq \mathcal{L}_\nu   \Vert x_1-x_2\Vert^{\frac{\nu}{2}},
\end{equation}
with $\mathcal{L}_\nu :=\left(\frac{(2\vartheta(2\varepsilon)^{1-\nu}+(2+\gamma L_\nu))(2\varepsilon)}{\vartheta-\rho\gamma}\right)^{\frac{1}{2}}$.
Equation \eqref{lochol:eq11:dif} yields the single-valuedness of $\Tprox{\gf}{\gamma}{p}(x)$
for any $x\in U_\gamma$ and proves \eqref{lochol:maineq:dif}. It also shows the continuity of
$\Tprox{\gf}{\gamma}{p}$ on $U_\gamma$.  
\end{proof}

In the following theorem, we use the notation $\mathcal{C}^{2, \mu}_{L_\mu}(U)$, referring to the class of twice differentiable functions on $U\subseteq\R^n$ whose second derivative is $\mu$-H\"{o}lder continuous on $U$.
\begin{theorem}[Differentiability and weak smoothness of HiFBE]\label{th:weaksm}
Let Assumptions~\ref{assum:mainassum:f}~and~\ref{assum:prox} hold, and $p=1+\nu$.
If $f\in \mathcal{C}^{2}(U)$ on an open neighborhood $U$ of $\ov{x}$, then the following statements hold.
\begin{enumerate}[label=(\textbf{\alph*}), font=\normalfont\bfseries, leftmargin=0.7cm]
\item\label{th:weaksm:a}  there exists $\ov{\gamma}>0$ such that for each 
 $\gamma\in \left(0, \ov{\gamma}\right)$,  there exists $r>0$ such that 
$\fgam{\gf}{p}{\gamma}\in \mathcal{C}^1(\mb(0; r))$.
\end{enumerate}
Moreover, if $f\in \mathcal{C}^{2, \mu}_{L_\mu}(U)$ for some $L_\mu>0$ with $\mu\in (0,1)$, then
\begin{enumerate}[label=(\textbf{\alph*}), font=\normalfont\bfseries, leftmargin=0.7cm, start=2]
 \item\label{th:weaksm:b} by setting $\eta=\bs\min\{\mu,\nu^2/2\}$, there exists $L_\eta>0$ such that for each $x_1, x_2\in \mb(0; r)$,
\[
\Vert\nabla \fgam{\gf}{p}{\gamma}(x_2)- \nabla \fgam{\gf}{p}{\gamma}(x_1)\Vert\leq
L_\eta\Vert x_2-x_1\Vert^\eta.
\]
\end{enumerate}
\end{theorem}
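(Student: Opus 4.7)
My plan is to derive part~\ref{th:weaksm:a} as a direct combination of Theorem~\ref{th:dif:cont} with Theorem~\ref{th:char:diff:sing}~\ref{th:char:diff:sing:b}, and to derive part~\ref{th:weaksm:b} by differencing the explicit gradient formula \eqref{eq:diffor:th:char:diff:sing} using the local $\frac{\nu}{2}$-H\"older estimate \eqref{lochol:maineq:dif}. For part~\ref{th:weaksm:a}, I would first invoke Theorem~\ref{th:dif:cont} to produce $\ov{\gamma}>0$ such that, for every $\gamma\in(0,\ov{\gamma})$, $\Tprox{\gf}{\gamma}{p}$ is single-valued, continuous, and $\frac{\nu}{2}$-H\"older continuous on some neighborhood of $\bar{x}=0$. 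Shrinking that neighborhood to an open ball $\mb(0;r)$ whose closure still lies in the open set where $f\in\mathcal{C}^{2}$, the hypotheses of Theorem~\ref{th:char:diff:sing}~\ref{th:char:diff:sing:b} are satisfied on $\mb(0;r)$, and its application yields $\fgam{\gf}{p}{\gamma}\in\mathcal{C}^{1}(\mb(0;r))$ together with the pointwise identity $\nabla \fgam{\gf}{p}{\gamma}(x) = \nabla^2 f(x)(y(x)-x) + \frac{1}{\gamma}\Vert x - y(x)\Vert^{p-2}(x - y(x))$, where $y(x):=\Tprox{\gf}{\gamma}{p}(x)$; this identity is the starting point for part~\ref{th:weaksm:b}.

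For part~\ref{th:weaksm:b}, I would fix $x_1,x_2\in\mb(0;r)$, set $y_i:=y(x_i)$, and split $\nabla\fgam{\gf}{p}{\gamma}(x_2)-\nabla\fgam{\gf}{p}{\gamma}(x_1)$ into a second-order part and a power part. The second-order part $\nabla^{2}f(x_{2})(y_{2}-x_{2})-\nabla^{2}f(x_{1})(y_{1}-x_{1})$ I would further decompose as $\nabla^{2}f(x_{2})[(y_{2}-y_{1})-(x_{2}-x_{1})]+[\nabla^{2}f(x_{2})-\nabla^{2}f(x_{1})](y_{1}-x_{1})$; the first summand is bounded by $\sup_{x\in\bs{\rm cl}(\mb(0;r))}\Vert\nabla^{2}f(x)\Vert<\infty$ (finite by continuity of $\nabla^2 f$ on a compact set) combined with \eqref{lochol:maineq:dif}, while the second is bounded by the $\mu$-H\"older property of $\nabla^2 f$ times $\sup_{x\in\mb(0;r)}\Vert y(x)-x\Vert<\infty$. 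For the power part $\frac{1}{\gamma}[\Vert x_{2}-y_{2}\Vert^{p-2}(x_{2}-y_{2})-\Vert x_{1}-y_{1}\Vert^{p-2}(x_{1}-y_{1})]$ I would use the classical fact that $u\mapsto\Vert u\Vert^{p-2}u$ is $\nu$-H\"older continuous on $\R^n$ for $p=1+\nu\in(1,2]$ (equivalently, $\frac{1}{p}\Vert\cdot\Vert^{p}\in\mathcal{C}^{1,\nu}(\R^n)$), together with $\Vert(x_{2}-y_{2})-(x_{1}-y_{1})\Vert\leq\Vert x_{2}-x_{1}\Vert+\Vert y_{2}-y_{1}\Vert=O(\Vert x_{2}-x_{1}\Vert^{\nu/2})$ on the bounded ball $\mb(0;r)$.

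Collecting the three resulting exponents $\mu$, $\frac{\nu}{2}$, and $\nu\cdot\frac{\nu}{2}=\frac{\nu^2}{2}$, and absorbing higher powers of $\Vert x_2-x_1\Vert$ into lower ones on the bounded ball, the uniform exponent $\eta=\frac{\nu}{2}\min\{\mu,\nu\}$ emerges as a valid lower bound of $\min\{\mu,\frac{\nu}{2},\frac{\nu^2}{2}\}$ in both cases $\mu\leq\nu$ (where $\eta=\frac{\mu\nu}{2}$) and $\mu\geq\nu$ (where $\eta=\frac{\nu^2}{2}$), producing the asserted estimate with an explicit $L_\eta$. The main technical obstacle I anticipate is the global $\nu$-H\"older continuity of $u\mapsto\Vert u\Vert^{p-2}u$ on $\R^n$ for $p\in(1,2]$: this is not directly supplied by Lemma~\ref{lem:findlowbounknu:lemma}~\ref{lem:basicineq:a}, which provides only a monotonicity-type bound on a restricted ball, so it must either be quoted from standard references or supplied as a short auxiliary lemma; aside from this, the argument reduces to careful bookkeeping of constants and exponents using the boundedness of $\nabla^2 f$ and of the residual $x\mapsto y(x)-x$ on $\bs{\rm cl}(\mb(0;r))$.
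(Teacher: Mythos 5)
Your proposal is correct and follows the paper's strategy almost exactly: part~\ref{th:weaksm:a} is obtained by combining Theorem~\ref{th:dif:cont} with Theorem~\ref{th:char:diff:sing}~\ref{th:char:diff:sing:b} after shrinking to a ball whose closure lies in $U\cap U_\gamma$, and part~\ref{th:weaksm:b} by differencing \eqref{eq:diffor:th:char:diff:sing} into a Hessian term and a power term, controlling the latter via the $\nu$-H\"older continuity of $u\mapsto\Vert u\Vert^{p-2}u$ (the paper quotes precisely this from \cite[Theorem~6.3]{Rodomanov2020}, so the ``technical obstacle'' you flag is resolved exactly as you anticipated) and feeding in \eqref{lochol:maineq:dif}. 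The one genuine difference is your treatment of the Hessian term: you decompose $\nabla^{2}f(x_{2})(y_{2}-x_{2})-\nabla^{2}f(x_{1})(y_{1}-x_{1})$ into $\nabla^{2}f(x_{2})\bigl[(y_{2}-y_{1})-(x_{2}-x_{1})\bigr]+\bigl[\nabla^{2}f(x_{2})-\nabla^{2}f(x_{1})\bigr](y_{1}-x_{1})$ and bound the two pieces by $\sup\Vert\nabla^{2}f\Vert\cdot O(\Vert x_2-x_1\Vert^{\nu/2})$ and $L_\mu\Vert x_2-x_1\Vert^{\mu}\cdot\sup\Vert y_1-x_1\Vert$ respectively, whereas the paper bounds this term in a single step by $L_\mu\Vert x_2-x_1+y_2-y_1\Vert^{\mu}$ in \eqref{eq2:th:weaksm} --- a step that does not follow from the $\mu$-H\"older property of $\nabla^{2}f$ alone and appears to conflate the variation of $\nabla^{2}f$ with the variation of its argument. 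Your decomposition is the rigorous way to handle it; it yields exponent $\min\{\mu,\nu/2\}$ for that term instead of the paper's $\mu\nu/2$, and since both dominate $\eta=\tfrac{\nu}{2}\min\{\mu,\nu\}$ on a bounded ball, your exponent bookkeeping still delivers the stated estimate. In short, your route is essentially the paper's, but with a cleaner (and strictly more defensible) treatment of the second-order term.
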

\begin{proof}
\ref{th:weaksm:a} From Proposition~\ref{lem:progpcalm}~\ref{lem:progpcalm:calm}, $g$ is high-order prox-bounded with a threshold $\gamma^{g, p}>0$.
By Theorem~\ref{th:dif:cont}, there exists $\ov{\gamma}\in(0,\gamma^{g,p})$ such that for each 
 $\gamma\in \left(0, \ov{\gamma}\right)$,  there exists a neighborhood $U_\gamma$ of $\ov{x}$ such that 
$\Tprox{\gf}{\gamma}{p}$ is single-valued and continuous on $U_\gamma$. Fix $\gamma\in \left(0, \ov{\gamma}\right)$ and choose $r>0$ such that
$\bs{\rm cl}(\mb(0; r))\subseteq U\cap U_\gamma$. The high-order prox-boundedness of $g$ and 
Theorem~\ref{th:char:diff:sing} imply that $\fgam{\gf}{p}{\gamma}\in \mathcal{C}^1(\mb(0; r))$.
\\
\ref{th:weaksm:b} Let $x_1, x_2\in \mb(0; r)$  and $y_i=\Tprox{\gf}{\gamma}{p}(x_i)$,  $i=1, 2$. 
From \eqref{eq:diffor:th:char:diff:sing}, 
\begin{align}\label{eq1:th:weaksm}
\Vert\nabla \fgam{\gf}{p}{\gamma}(x_2)- \nabla \fgam{\gf}{p}{\gamma}(x_1)\Vert&\leq 
\Vert \nabla^2f(x_2)(y_2-x_2) - \nabla^2f(x_1)(y_1-x_1)\Vert
\notag\\&~~~~+
\frac{1}{\gamma}\left\Vert\Vert x_2-y_2\Vert^{p-2}(x_2-y_2)-\Vert x_1-y_1\Vert^{p-2}(x_1-y_1)\Vert\right\Vert.
\end{align}
Since $\Tprox{\gf}{\gamma}{p}$ is continuous on the compact set $\closure{\mb(0; r)}$, there exists $C_r>0$ such that
\[
\|\Tprox{\gf}{\gamma}{p}(x)\|\leq C_r,\qquad \forall x\in \mb(0; r).
\]
Hence, for $i=1,2$,
\[
\|y_i-x_i\|\leq \|y_i\|+\|x_i\|\leq C_r+r=:D_r.
\]
Moreover, since $f\in C^{2,\mu}_{L_\mu}(U)$ and $\closure{\mb(0; r)}\subseteq U$, we may define
\[
M_r:={\mathop {\bs\sup}\limits_{x\in \closure{\mb(0; r)}}}\|\nabla^2f(x)\|<+\infty.
\]
Writing
\[
\nabla^2f(x_2)(y_2-x_2)-\nabla^2f(x_1)(y_1-x_1) = \big(\nabla^2f(x_2)-\nabla^2f(x_1)\big)(y_2-x_2)+\nabla^2f(x_1)\big((y_2-x_2)-(y_1-x_1)\big),
\]
and setting $\alpha:=\min\{\mu,\nu/2\}$,
the $\mu$-H\"older continuity of $\nabla^2 f$ and \eqref{lochol:maineq:dif} imply that
\begin{align}\label{eq2:th:weaksm}
\Vert \nabla^2f(x_2)(y_2-x_2) - \nabla^2f(x_1)(y_1-x_1)\Vert&\leq L_\mu\Vert x_2-x_1\Vert^\mu D_r
+M_r\left(\Vert x_2-x_1\Vert+\Vert y_2-y_1\Vert\right)
\notag\\&\leq L_\mu D_r\Vert x_2-x_1\Vert^\mu+M_r\left(\Vert x_2-x_1\Vert+\mathcal{L}_\nu  \Vert  x_2-x_1\Vert^{\frac{\nu}{2}}\right)
\notag\\&\leq \left(L_\mu D_r(2r)^{\mu-\alpha}+M_r\left((2r)^{1-\alpha}+\mathcal{L}_\nu  (2r)^{\frac{\nu}{2}-\alpha}\right)\right)\Vert x_2-x_1\Vert^\alpha
\notag\\&=: A_r \|x_2-x_1\|^\alpha.
\end{align}
Moreover,
  \begin{align*}
\frac{1}{\gamma}\left\Vert\Vert x_2-y_2\Vert^{p-2}(x_2-y_2)-\Vert x_1-y_1\Vert^{p-2}(x_1-y_1)\Vert\right\Vert &=\left\Vert \nabla\left(\frac{1}{p\gamma}\Vert\cdot\Vert^{p}\right)\left(x_2 - y_2\right)-\nabla\left(\frac{1}{p\gamma}\Vert\cdot\Vert^{p}\right)\left(x_1 -y_1\right)\right\Vert\\
 &\leq \frac{2^{2-p}}{\gamma} \left\Vert (x_2-x_1) - (y_2-y_1)\right\Vert^{p-1},
  \end{align*}
  where the last inequality comes from \cite[Theorem~6.3]{Rodomanov2020}. Together with \eqref{lochol:maineq:dif}, this ensures
  \begin{align}\label{eq3:th:weaksm}
\frac{1}{\gamma}\left\Vert\Vert x_2-y_2\Vert^{p-2}(x_2-y_2)-\Vert x_1-y_1\Vert^{p-2}(x_1-y_1)\Vert\right\Vert  &\leq \frac{2^{2-p}}{\gamma}\left(\Vert x_2-x_1\Vert + \Vert y_2-y_1\Vert\right)^{\nu}\notag\\
 &\leq  \frac{2^{2-p}}{\gamma}\left(\Vert x_2-x_1\Vert^{1-\frac{\nu}{2}}\Vert x_2-x_1\Vert^{\frac{\nu}{2}} +\mathcal{L}_\nu  \Vert  x_1-x_2\Vert^{\frac{\nu}{2}}\right)^{\nu}\notag\\
&\leq \frac{2^{2-p}}{\gamma}\left((2r)^{1-\frac{\nu}{2}}+\mathcal{L}_\nu \right)^{\nu} \Vert x_2-x_1\Vert^{\frac{\nu^2}{2}}\notag\\
&=: B_r \Vert x_2-x_1\Vert^{\frac{\nu^2}{2}}
  \end{align}
Combining \eqref{eq1:th:weaksm}, \eqref{eq2:th:weaksm}, and \eqref{eq3:th:weaksm}, and setting
$\eta:=\bs\min\{\alpha,\nu^2/2\}=\bs\min\{\mu,\nu^2/2\}$, there exists $L_\eta>0$ such that 
\[
\Vert\nabla \fgam{\gf}{p}{\gamma}(x_2)- \nabla \fgam{\gf}{p}{\gamma}(x_1)\Vert\leq
L_\eta\Vert x_2-x_1\Vert^\eta.
\]
This completes the proof.
\end{proof}

To illustrate the role of $p$-calmness in Theorems~\ref{th:weaksm}~and~\ref{th:dif:cont}, the following example presents a function with both a non-$p$-calm point and $p$-calm points, including one that is not a global minimizer, and shows that the HiFBE can fail to be differentiable at a non-$p$-calm point.

\begin{example}[An illustration of $p$-calmness]\label{ex:p-calm-vs-non-p-calm}
Let $p=\frac{3}{2}$, $f\equiv0$, and define $g:\R\to\R$ by $g(x):=-\vert x\vert+s(x)$ where
\[
s(x):=x^2-4e^{-(x-2)^2}-4e^{-(x+2)^2}+8e^{-4}-\frac12\left(e^{-((x-0.8)/0.18)^2}+e^{-((x+0.8)/0.18)^2}-2e^{-(0.8/0.18)^2}\right).
\]
Set $\gf(x):=f(x)+g(x)=g(x)=-|x|+s(x)$. 
We proceed in four steps. First, we show that $\ov{x}=0$ is not a $p$-calm point. Next, we identify two $p$-calm points of $\gf$, namely a non-global local minimizer $x_{\mathrm{loc}}$ and a global minimizer $x^\star$. We then verify, by Theorems~\ref{th:dif:cont}~and~\ref{th:weaksm}, that the envelope $\fgam{\gf}{p}{\gamma}$ is differentiable in neighborhoods of these $p$-calm points for sufficiently small $\gamma>0$. Finally, we prove that $\fgam{\gf}{p}{\gamma}$ fails to be differentiable at the non-$p$-calm point $\ov{x}=0$ for every $\gamma>0$.

We first show that $\ov{x}=0$ is not $p$-calm.
We have $s\in \mathcal{C}^\infty(\R)$, $s(0)=0$, and, by symmetry, $s'(0)=0$. Therefore, the Taylor expansion of $s$ at $\ov{x}=0$ starts at order at least two, and hence $s(x)=\mathcal{O}(x^2)$.
Thus,
\[
\gf(x)=\gf(0)-|x|+\mathcal{O}(x^2),
\]
since $\gf(0)=0$.
Let $M>0$ be arbitrary. From the above expansion, there exists $\delta_1>0$ such that
\begin{equation}\label{eq1:ex:p-calm-vs-non-p-calm}
    \gf(x)-\gf(0)\le -\frac12|x|,\qquad \forall\, x\in(-\delta_1,\delta_1).
\end{equation}
Moreover, there exists $\delta_2>0$ such that
\begin{equation}\label{eq2:ex:p-calm-vs-non-p-calm}
M|x|^{3/2}\le \frac14|x|,\qquad \forall\, x\in(-\delta_2,\delta_2).
\end{equation}
Hence, for every $x\neq 0$ with $|x|<\min\{\delta_1,\delta_2\}$,
\[
\gf(x)+M|x|^{3/2}\le\gf(0)-\frac12|x|+\frac14|x|<\gf(0).
\]
Since this happens for every $M>0$, the point $\ov{x}=0$ is not a $p$-calm point of $\gf$ in the
sense of Definition~\ref{def:calmp}.
A numerical inspection shows that $\gf$ has two positive strict local minimizers, $x_{\mathrm{loc}}\approx 0.86$ and $x^\star\approx 1.67$
with $\gf(x_{\mathrm{loc}})\approx -1.51$ and $\gf(x^\star)\approx -2.32$. Hence $x_{\mathrm{loc}}$ is a strict local minimizer but not a global one, whereas $x^\star$ is a global minimizer.
Since $x^\star\in\argmin{x\in\R} \gf(x)$, Proposition~\ref{lem:progpcalm}~\ref{lem:progpcalm:mincalm} implies that $x^\star$ is a $p$-calm point. We next show that the non-global local minimizer $x_{\mathrm{loc}}$ is also $p$-calm.

It follows from $x_{\mathrm{loc}}\neq 0$ that the absolute-value term is smooth in a neighborhood of $x_{\mathrm{loc}}$, and
therefore $\gf$ is of class $\mathcal{C}^\infty$ near $x_{\mathrm{loc}}$. Since $x_{\mathrm{loc}}$ is a strict local minimizer,
there exists $\varepsilon>0$ such that
\[
\gf(x)>\gf(x_{\mathrm{loc}}),\qquad\forall\, x\neq x_{\mathrm{loc}}\ \text{with}\ |x-x_{\mathrm{loc}}|<\varepsilon.
\]
Define
\[
h(x):=\frac{\gf(x_{\mathrm{loc}})-\gf(x)}{|x-x_{\mathrm{loc}}|^{3/2}},\qquad x\neq x_{\mathrm{loc}}.
\]
Then $h(x)<0$ whenever $0<|x-x_{\mathrm{loc}}|<\varepsilon$. On the other hand, as $|x|\to\infty$, the Gaussian terms vanish, and hence
\begin{equation}\label{eq3:ex:p-calm-vs-non-p-calm}
\gf(x)=x^2-|x|+\mathcal{O}(1),
\end{equation}
i.e.,
\[
h(x)=\frac{\gf(x_{\mathrm{loc}})-\gf(x)}{|x-x_{\mathrm{loc}}|^{3/2}}=\frac{\gf(x_{\mathrm{loc}})-x^2+|x|+\mathcal{O}(1)}{|x-x_{\mathrm{loc}}|^{3/2}}
\to -\infty
\qquad (|x|\to\infty).
\]
Hence, for sufficiently large $R>0$, the function $h$ attains a finite maximum on the compact set
\[
K:=\left\{x\in\R:\ \varepsilon\le |x-x_{\mathrm{loc}}|\le R\right\},
\]
i.e.,
\[
M_{\mathrm{loc}}:=\sup_{x\neq x_{\mathrm{loc}}}\frac{\gf(x_{\mathrm{loc}})-\gf(x)}{|x-x_{\mathrm{loc}}|^{3/2}}<+\infty.
\]
Choosing any $M>M_{\mathrm{loc}}$, we obtain
\[
\gf(x)+M|x-x_{\mathrm{loc}}|^{3/2}>\gf(x_{\mathrm{loc}}),\qquad \forall\, x\neq x_{\mathrm{loc}},
\]
which proves that $x_{\mathrm{loc}}$ is a $p$-calm point of $\gf$, although it is not a global minimizer.

Since both $x_{\mathrm{loc}}$ and $x^\star$ are nonzero, the function $g=\gf$ is $\mathcal{C}^\infty$ in neighborhoods of these points and is therefore prox-regular there. Moreover, $f\equiv 0$ belongs to
$\mathcal{C}^{1,\nu}_{L_\nu}(\R)$ for every $\nu\in(0,1]$, in particular for $\nu=\frac12$, so that
$p=1+\nu=\frac32$.
Hence, after a local translation and normalization, the assumptions of Theorems~\ref{th:dif:cont} and~\ref{th:weaksm} are satisfied at both $x_{\mathrm{loc}}$ and $x^\star$.
Therefore, for sufficiently small $\gamma>0$,
the envelope $\fgam{\gf}{p}{\gamma}$ is differentiable in neighborhoods of $x_{\mathrm{loc}}$ and $x^\star$.

Now, we show the non-differentiability of $\fgam{\gf}{p}{\gamma}$  at $\ov x=0$ for every $\gamma>0$. Let $\gamma>0$ be arbitrary and define $m_\gamma:\R\to\R$ as
\[
m_\gamma(x):=\gf(x)+\frac{2}{3\gamma}|x|^{\frac{3}{2}}.
\]
Since $\gf$ is even, the function $m_\gamma$ is also even. Moreover, there exist $\delta_1, \delta_2>0$ such that $\gf$ satisfies \eqref{eq1:ex:p-calm-vs-non-p-calm} and \eqref{eq2:ex:p-calm-vs-non-p-calm} with 
$M=\frac{2}{3\gamma}$. Hence, for every $x\neq 0$ with $|x|<\min\{\delta_1,\delta_2\}$,
\[
m_\gamma(x)=\gf(x)+\frac{2}{3\gamma}|x|^{3/2}\le\gf(0)-\frac{1}{2}|x|+\frac{1}{4}|x|<\gf(0)=m_\gamma(0).
\]
Thus, $\ov x =0$ is not a minimizer of $m_\gamma$.
On the other hand, as $|x|\to\infty$, \eqref{eq3:ex:p-calm-vs-non-p-calm} holds. Thus, 
$m_\gamma(x)\to +\infty$ as $|x|\to\infty$, which means that $m_\gamma$ is a coercive function. Hence $m_\gamma$ attains a global minimum at some point $x_\gamma\neq 0$. Since $m_\gamma$ is even,
$-x_\gamma$ is also a minimizer. Therefore, $\Tprox{\gf}{\gamma}{p}(0)$ contains at least two distinct points for every $\gamma>0$. Assume now, by contradiction, that  $\fgam{\gf}{p}{\gamma}$ is differentiable at $\ov{x}= 0$. Then, by Remark~\ref{rem:frech:hifbe},
for every $y\in \Tprox{\gf}{\gamma}{p}(0)$,
\[
\nabla\fgam{\gf}{p}{\gamma}(0)=\frac{1}{\gamma}|0-y|^{p-2}(0-y)
= -\frac{1}{\gamma}\operatorname{sign}(y)\,|y|^{1/2}.
\]
Evaluating this identity at the two distinct minimizers $x_\gamma$ and $-x_\gamma$ yields two different values, which is impossible. Hence $\fgam{\gf}{p}{\gamma}$ is not differentiable at $x=0$ for every $\gamma>0$.
In particular, for every sufficiently small $\gamma>0$ in the range of Theorem~\ref{th:char:diff:sing}~\ref{th:char:diff:sing:a}, the multivaluedness of $\Tprox{\gf}{\gamma}{p}(0)$  also rules out the possibility that $\fgam{\gf}{p}{\gamma}$ belongs to $\mathcal{C}^1$ on any neighborhood of $\ov{x}=0$.

This behavior is illustrated in Figure~\ref{fig:calm-noncalm}, where the graph of $\gf$ and its
envelope $\fgam{\gf}{p}{\gamma}$ highlights the non-$p$-calm point at $\ov{x}=0$ together with the two $p$-calm points $x_{\mathrm{loc}}$ and $x^\star$.
\end{example}
\begin{figure}[t]
\centering
\includegraphics[width=0.78\textwidth]{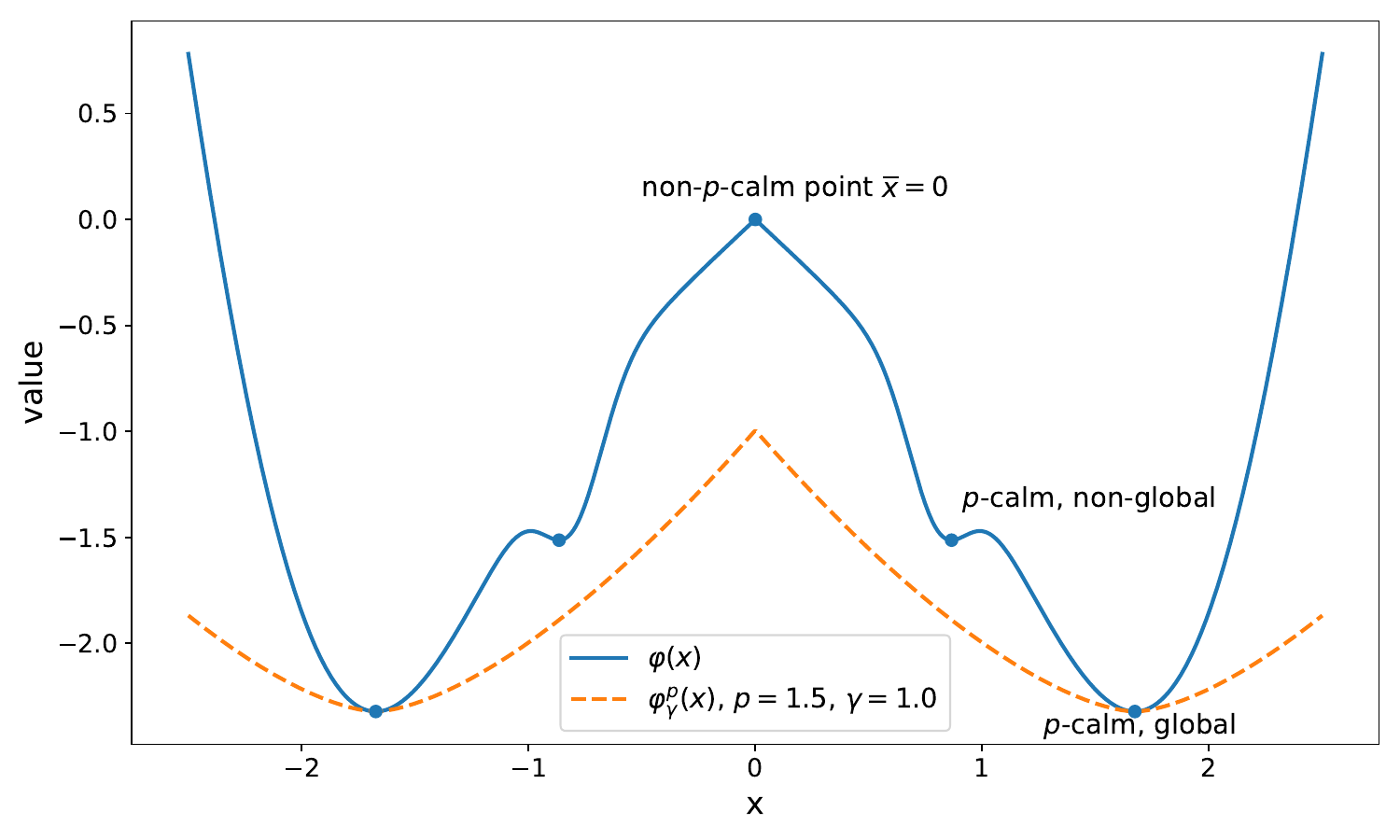}
\caption{Graph of the function $\gf$ and its envelope $\fgam{\gf}{p}{\gamma}$ for $p=\frac{3}{2}$ and $\gamma=1$.
The figure highlights the non-$p$-calm point $\ov{x}=0$, the $p$-calm non-global local minimizer
$x_{\mathrm{loc}}$, and the $p$-calm global minimizer $x^\star$. The loss of differentiability of the
envelope at $\ov{x}=0$ is visible in the plot.}
\label{fig:calm-noncalm}
\end{figure}

\begin{remark}[Importance of $p$-calmness]
By Proposition~\ref{lem:progpcalm}~\ref{lem:progpcalm:mincalm}, every global minimizer of $\gf$ is automatically a $p$-calm point, and consequently $p$-calmness is not an additional restriction at optimal solutions. Example~\ref{ex:p-calm-vs-non-p-calm} also shows that $p$-calmness may hold at points that are not global minimizers.
From the viewpoint of the present paper, the importance of $p$-calmness is that, together with prox-regularity, it provides a natural setting in which the HiFBS mapping may become locally
single-valued and the HiFBE may become differentiable. This, in turn, provides a way to develop smoothing techniques based on the HiFBE. We discuss these aspects further in
Remarks~\ref{rem:relHiFBE_grad}~and~\ref{rem:algorithmic_implication}.
\end{remark}

Building on the notion of the high-order majorant \eqref{eq:highOrdmajf}, some {\it high-order forward-backward algorithms} (HiFBA) have been developed in \cite{kabgani2025fb}, employing regularization terms of arbitrary order $p>1$, namely,
 \begin{equation}\label{eq:fbsGeneral:p}
    x^{k+1}\in \argmint{y\in\R^n}\left\{f(x^k)+\langle \nabla f(x^k), y-x^k\rangle+g(y)+\frac{1}{p\gamma}\Vert y - x^k\Vert^p\right\}.
\end{equation}
This offers improved flexibility and adaptability when the objective function $f$ lacks the $L$-smoothness property. 
We conclude this section with the following observation, which clarifies the relationship between the scaled gradient method on HiFBE and the iterative scheme~\eqref{eq:fbsGeneral:p}.
\begin{remark}[Relation between HiFBA and scaled gradient method]\label{rem:relHiFBE_grad}
Let $\fgam{\gf}{p}{\gamma}\in \mathcal{C}^1(U)$ on an open set $U\subseteq\R^n$,  and consider the sequence $\{x^k\}_{k\in \Nz}\subseteq U$ generated by
the iterative method \eqref{eq:fbsGeneral:p}. For each $k\in \Nz$, we have $x^{k+1}= \Tprox{\gf}{\gamma}{p}(x^k)$, and the gradient of the HiFBE satisfies
\[
\nabla \fgam{\gf}{p}{\gamma}(x^k)=\nabla^2f(x^k)(x^{k+1}-x^k)+\frac{1}{\gamma}\Vert x^k-x^{k+1}\Vert^{p-2}(x^k-x^{k+1}).
\]
Thus, the iterative method \eqref{eq:fbsGeneral:p} can be interpreted as
\begin{equation}\label{eq1:Interpretation}
x^{k+1} = x^k + d^k,
\end{equation}
where the step direction $d^k$ satisfies
\begin{equation}\label{eq2:Interpretation0}
\left(\frac{1}{\gamma}\Vert d^k\Vert^{p-2}I - \nabla^2f(x^k)\right)d^k=-\nabla \fgam{\gf}{p}{\gamma}(x^k),
\end{equation}
which is a {\it scaled gradient method} on HiFBE. For the special case $p=2$, the iteration \eqref{eq1:Interpretation} becomes
\[
x^{k+1} = x^k- \gamma\left(I - \gamma\nabla^2f(x^k)\right)^{-1}\nabla \fgam{\gf}{p}{\gamma}(x^k),
\]
which corresponds to a {\it scaled gradient method} applied to the classical forward–backward envelope, as also discussed in~\cite[Eq.~(2.12)]{Stella17}. 
Moreover, if $f=0$,  then \eqref{eq1:Interpretation} reduces to
\[
x^{k+1} =x^k  - \gamma^{\frac{1}{p-1}}\left\Vert \nabla\fgam{\gf}{p}{\gamma}(x^k)\right\Vert^{\frac{2-p}{p-1}}\nabla\fgam{\gf}{p}{\gamma}(x^k),
\]
which represents a gradient-type method on HOME; see \cite[Remark~39]{Kabganidiff}.
\end{remark}

\begin{remark}[Algorithmic implication of Theorem~\ref{th:weaksm}]\label{rem:algorithmic_implication}
Let the assumptions of Theorem~\ref{th:weaksm} hold, and let $\ov\gamma>0$ denote the threshold provided by Theorems~\ref{th:dif:cont}~and~\ref{th:weaksm}. Then, for every $\gamma\in(0,\ov\gamma)$, there exists a neighborhood $\mb(0;r)$ such that $\Tprox{\gf}{\gamma}{p}$ is single-valued and continuous, and $\fgam{\gf}{p}{\gamma}$ is of class 
$\mathcal{C}^{1,\eta}(\mb(0; r))$  with $\eta=\bs\min\{\mu,\nu^2/2\}$.
Hence, on $\mb(0; r)$, the iteration
\[
x^{k+1}\in \Tprox{\gf}{\gamma}{p}(x^k), 
\]
can be interpreted, in view of Remark~\ref{rem:relHiFBE_grad}, as a scaled gradient-type method applied to the weakly smooth merit function $\fgam{\gf}{p}{\gamma}$.
More precisely, if $\{x^k\}_{k\in\Nz}\subseteq \mb(0;r)$ is generated by \eqref{eq:fbsGeneral:p}, then $x^{k+1}=\Tprox{\gf}{\gamma}{p}(x^k)$ and
\[
\left(\frac{1}{\gamma}\|d^k\|^{p-2}I-\nabla^2 f(x^k)\right)d^k=-\nabla\fgam{\gf}{p}{\gamma}(x^k),
\qquad d^k:=x^{k+1}-x^k.
\]
Therefore, Theorem~\ref{th:weaksm}~\ref{th:weaksm:b} provides exactly the regularity needed to analyze \eqref{eq:fbsGeneral:p}, equivalently \eqref{eq1:Interpretation}-\eqref{eq2:Interpretation0}, by means of the standard descent machinery for gradient-type methods with H\"older continuous gradients.

In particular, once this iteration is supplemented with a globalization mechanism ensuring sufficient decrease of $\fgam{\gf}{p}{\gamma}$ (for instance, a backtracking or line-search rule), one may use
the continuity of $\Tprox{\gf}{\gamma}{p}$ to conclude that every cluster point $x^\star$ satisfying $x^{k+1}-x^k\to 0$ is a proximal fixed point, namely $x^\star=\Tprox{\gf}{\gamma}{p}(x^\star)$.
Equivalently, such a limit point is a stationary point of the envelope dynamics induced by \eqref{eq:fbsGeneral:p}. For convergence of the whole sequence, and in particular for convergence rates, one needs additional assumptions beyond Theorem~\ref{th:weaksm}~\ref{th:weaksm:b}, such as a sufficient-decrease condition and, for rate statements, a Kurdyka-\L{}ojasiewicz or related error-bound hypothesis.

Finally, we emphasize that the admissible parameter $\gamma$ in the above discussion is the same local range as in Theorems~\ref{th:dif:cont}~and~\ref{th:weaksm}, that is, $\gamma$ must be chosen sufficiently small so that the local single-valuedness and continuity of $\Tprox{\gf}{\gamma}{p}$ hold. In particular, according to the proof of Theorem~\ref{th:dif:cont}, $\gamma$ is required to lie below the threshold imposed by the $p$-calmness constant, the H\"older constant of $\nabla f$, the prox-boundedness threshold $\gamma^{g, p}$, and the additional local restriction ensuring uniqueness.
\end{remark}


\section{Conclusion}\label{sec:disc}
This paper provided a rigorous analysis of the high-order forward–backward splitting (HiFBS) mapping and its associated high-order forward–backward envelope (HiFBE) for nonconvex composite optimization. 
We established fundamental regularity properties of these mappings, including boundedness and local uniform boundedness of HiFBS, as well as H\"{o}lder continuity of HiFBE and, under the corresponding additional conditions, its Lipschitz continuity. 
Moreover, we derived explicit expressions for the Fr\'{e}chet and Mordukhovich subdifferentials of HiFBE and obtained necessary and sufficient conditions for differentiability, together with sufficient conditions for weak smoothness.  By exploiting the prox-regularity of $g$ and the notion of $p$-calmness, we further demonstrated local single-valuedness and continuity of HiFBS, which in turn guarantee differentiability of HiFBE near calm points. 
These results advance the theoretical understanding of high-order envelope mappings and unify several perspectives in variational analysis, weak smoothness, and high-order proximal theory.  They provide the analytical foundation required to investigate the convergence and stability of gradient-based first-order methods for composite optimization problems in the nonconvex setting.


\subsection*{\textbf{Funding Information}}
The Research Foundation Flanders (FWO) research project G081222N and UA BOF DocPRO4 projects with ID 46929 and 48996 partially supported the paper's authors.

\bibliographystyle{spbasic}
\bibliography{references}

\end{document}